\documentclass[11pt,a4paper]{article}
\usepackage{amsfonts}

\usepackage{graphics}
\usepackage{indentfirst}
\usepackage{latexsym}
\usepackage{amsmath}
\usepackage{amssymb}
\usepackage[dvips]{epsfig}
\usepackage{amscd}
\usepackage{amsthm}
\usepackage{hyperref}
\hypersetup{
	colorlinks=true,
	linkcolor=blue,
	anchorcolor=blue,
	citecolor=blue}
\allowdisplaybreaks[4]
\usepackage{cite}

\newtheorem{theorem}{Theorem}[section]
\newtheorem{remark}{Remark}[section]

\newtheorem{lemma}[theorem]{Lemma}

\newcommand{\mr}{\mathbb{R}}

\newcommand{\bl}{\begin{lemma}}
	\newcommand{\el}{\end{lemma}}
\newcommand{\et}{\end{theorem}}

\newcommand{\la}{\label}

\newcommand{\ka}{\kappa}

\newcommand{\Om}{\Omega}

\newcommand{\bn}{\begin{eqnarray}}
\newcommand{\en}{\end{eqnarray}}
\newcommand{\bnn}{\begin{eqnarray*}}
\newcommand{\enn}{\end{eqnarray*}}

\newcommand{\bnnn}{\begin{eqnarray*}}
\newcommand{\ennn}{\end{eqnarray*}}

\newcommand{\ba}{\begin{aligned}}
\newcommand{\ea}{\end{aligned}}
\newcommand{\be}{\begin{equation}}
\newcommand{\ee}{\end{equation}}

\def\norm[#1]#2{\|#2\|_{#1}}

\def\la{\label}

\makeatletter      
\@addtoreset{equation}{section}
\makeatother       
\title{ Initial and
initial boundary value problems for the Navier-Stokes equations with temperature-dependent transport coefficients and large data
}
\date{ }\author{Hongyu Wang$^a$, Rong Zhang$^b$ \thanks{
		Email addresses: {hyuwang\_a@163.com} (H. Y. Wang), rzhang0921@gmail.com (R.
		Zhang).} \\[3mm]  a. School of Mathematics and Computer Science,\\ Nanchang University,  Nanchang 330031, P. R. China; \\
	b. School of Mathematics and Computer Science, \\Nanchang University,  Nanchang 330031, P. R. China.}

\begin{document}
\maketitle

\begin{abstract}
We consider initial and initial boundary value problems for the
one-dimensional compressible Navier--Stokes equations of an ideal polytropic
gas in unbounded domains. The viscosity and heat conductivity are proportional
to $\theta^\alpha$ and $\theta^\beta$, respectively. For every fixed
$\beta\geq0$ and sufficiently small $\alpha\geq0$, depending on the initial
data, we prove the global  existence and asymptotic stability of a strong solution for arbitrarily large
$H^1$ initial data. Dividing the momentum
equation by the viscosity allows us to estimate the specific-volume derivative
without an initial $H^2$ assumption. 
\end{abstract}

\section{Introduction}\la{1}
The compressible Navier-Stokes system describing the one-dimensional motion of a
viscous heat-conducting perfect polytropic gas can be written in Lagrangian coordinates
in the following form 
\begin{equation}\la{11}
	v_t  =u_x,
\end{equation}
\begin{equation}
	u_t+P_x  =\left(\mu\frac{u_x}{v}\right)_x, \la{12}
\end{equation}
\begin{equation}\la{13}
	\left(e+\frac{u^2}{2}\right)_t+(P u)_x  =\left(\kappa \frac{\theta_x}{v}+\mu \frac{u u_x}{v}\right)_x,
\end{equation}
where $t>0$ is time, $x\in\Omega$ denotes the Lagrangian mass coordinate, and the unknown functions $v>0, u, \theta>0, e>0,$ and $P$ are, respectively, the   specific volume of the gas, fluid velocity, absolute temperature,  internal energy, and pressure; $\mu$ and $\kappa$ are the viscosity and heat conductivity coefficients.
For an ideal polytropic gas, the constitutive relations read
\begin{equation}\la{14}
	P=R \theta / v, \quad e  =c_v \theta+\text { const.},
\end{equation}
where  $R>0$ is the gas constant, and $c_{v}$ is heat capacity at constant volume. We assume that $c_v$ is a positive constant and that the transport coefficients have the form
\begin{equation}
	\mu=\tilde{\mu}\theta^\alpha, \quad \kappa=\tilde{\kappa}\theta^\beta,
	\la{transport}
\end{equation}
where $\tilde{\mu},\tilde{\kappa}>0,$ and       $\alpha,  \beta \geq 0.$

We prescribe the initial conditions
\begin{equation}
	(v(x, 0), u(x, 0), \theta(x, 0))=\left(v_{0}(x), u_{0}(x), \theta_{0}(x)\right), \quad x \in \Omega, \label{15}
\end{equation}
and consider the following three problems:
\begin{enumerate}
	\item[(1)]  Cauchy problem
	\begin{equation}
		\Om=\mr,\quad\lim _{|x| \rightarrow\infty}(v(x,t), u(x,t), \theta(x,t))=(1,0,1), \quad t>0;\label{16}
	\end{equation}
	\item[(2)]  boundary and far-field conditions for $\Omega=(0, \infty)$,
	\be
	u(0, t)=0, \theta_x(0, t)=0, \lim _{x \rightarrow \infty}(v(x, t), u(x, t), \theta(x, t))=(1,0,1), \quad t>0;\la{160}
	\ee
	\item[(3)]  boundary and far-field conditions for $\Omega=(0, \infty)$,
	\be\la{1600}
	u(0, t)=0, \theta(0, t)=1, \lim _{x \rightarrow \infty}(v(x, t), u(x, t), \theta(x, t))=(1,0,1), \quad t>0.\ee
\end{enumerate}

When viscosity $\mu$ and heat conductivity $\kappa$ are positive constants ($\alpha$ = $\beta$ = 0), Kazhikhov and Shelukhin \cite{ka1977} obtained the global existence and uniqueness of strong solutions on a bounded domain with arbitrarily large initial data in $H^1$, which was extended to unbounded domains by Kawashima-Nishida in \cite{kaw1981}. The crucial analysis in \cite{ka1977} was to derive the representation of specific volume $v$, based on which the pointwise upper and lower bounds of density could be obtained. 
Further results on initial boundary value problems can be found in \cite{zlo1989,ant1990} and the references therein. As for the large time behavior, the system has been studied under some smallness conditions on the initial data; see [\citen{hoff1992}, \citen{jiang1998}, \citen{kan1979}] and the references therein. Li--Liang \cite{li2016} subsequently proved that the global solution is asymptotically stable as time tends to infinity  without any smallness assumptions on the initial data.

Within the framework of kinetic theory, the Chapman–Enskog expansion at the first-order approximation reveals that both the viscosity $\mu$ and thermal conductivity $\kappa$ depend solely on temperature ([\citen{cer1994},\citen{cha1990}]). More precisely, in the case of an inverse-power-law intermolecular potential proportional to 
$r^{-a}$, where $r$ denotes the intermolecular distance, these transport coefficients scale with temperature as $\theta^{\frac{a+4}{2a}}$, that is, $\alpha = \beta = \frac{a+4}{2a}$. This gives a square-root dependence ($\theta^{\frac{1}{2}}$) for elastic spheres and  linear dependence for Maxwellian molecules. 

In the case that  the viscosity $\mu$ is a constant and the heat-conductivity $\kappa$
varies with the temperature 
($\alpha = 0$ and $\beta >0$), Jenssen-Karper \cite{jenssen2010}  proved the global existence of a weak solution if $\beta \in (0,\frac{2}{3})$ on a bounded domain. Later on, Pan-Zhang \cite{pan2015} derived the global strong solution, provided that the initial data satisfy
\begin{equation}
	\left(v_0, u_0, \theta_0\right) \in H^1 \times H^2 \times H^2, 
\end{equation}
with $\beta \in (0,\infty)$. Huang--Shi \cite{huang2019} extended this result to initial data in $H^1\times H^1\times H^1$; see also \cite{lpp,wang2016} and the references therein. For unbounded domains, Li--Xu \cite{qian2024} established the large-time behavior of solutions to the initial and
initial boundary value problems with large initial data in unbounded domains, which can be regarded as a natural generalization of Li-Liang’s
result \cite{li2016}. 

When the viscosity $\mu$ is  a function of $v$ and the heat-conductivity $\ka$ varies with both $v$ and $\theta$, it follows from \eqref{11} and \eqref{12} that  \begin{equation}
	\label{volume-viscosity}
	\left(\frac{\mu(v)v_x}{v}\right)_t=u_t+P_x.
\end{equation}
 The works \cite{tan2013,kaw1985,jiang1998,chen2000,daf1982,daf19821} investigated the global well-posedness theory of \eqref{11}-\eqref{13} using Kanel's method \cite{kanel}. When $\mu$ depends on $\theta$, the corresponding identity becomes
\begin{equation}
	\label{temperature-viscosity}
	\left(\frac{\mu(\theta)v_x}{v}\right)_t = u_t +P_x +\frac{\mu_\theta(\theta)}{v}\left(\theta_t v_x-u_x\theta_x\right).
\end{equation}
The last term in \eqref{temperature-viscosity} contains both $\theta_t v_x$ and $u_x\theta_x$. Liu--Yang--Zhao--Zou \cite{liu2014} (see also \cite{wan2017}) proved a Nishida--Smoller type global existence result,  provided that the adiabatic exponent $\gamma$ is close enough to 1 and the oscillation of the temperature is small. Wang--Zhao \cite{wang2016} extended the analysis to the case that the viscosity $\mu$ depends on $v$ and $\theta$, specifically examining models where $\mu$ and $\ka$ take the form $\mu=\tilde{\mu}h(v)\theta^{\alpha}$, $\kappa = \tilde{\kappa}h(v)\theta^\alpha$, in which $\tilde{\mu}$,$\tilde{\ka}$ are positive constants. The authors proved  that under the smallness  assumption on $\alpha\geq0$ the Cauchy
problem of \eqref{11}-\eqref{13} has a global unique smooth solution  provided that the initial data belong to $H^3\times H^3 \times H^3$ and $v^{l_1}+v^{-l_2}\leq Ch(v)$,  for some positive numbers $l_1, l_2\geq1$ and $C>0$. When $l_1, l_2=0$, Sun-Zhang-Zhao \cite{sun2021} established that under the smallness assumption on $\alpha\geq0$ the Dirichlet problem of \eqref{11}-\eqref{13}
admits global strong solutions with initial data in $H^2\times H^2\times H^2$.
 Recently, for the Cauchy problem, Dong and Guo in \cite{dong2025} obtained the existence and asymptotic stability of a global strong solution, provided that $\alpha\geq0$ is suitably small and the initial data satisfy
\begin{equation}
	\left(v_0 -1, u_0, \theta_0 -1\right) \in H^2 \times H^1 \times H^1. 
\end{equation} 

As noted in \cite{dong2025}, that argument requires $v_0-1\in H^2$ and does not extend directly to $H^1$ initial data. The aim of this paper is to study the global existence and the large-time behavior of
solutions to the problem \eqref{11}-\eqref{13} and obtain uniform estimates at this lower regularity.

Throughout this paper, $R,c_v,\tilde\mu,\tilde\kappa$ are fixed positive
constants. Our main result is as follows.

\begin{theorem}\label{lm11}
Let $\beta\geq0$, and suppose that
\begin{equation}\label{190}
 (v_0-1,u_0,\theta_0-1)\in H^1(\Omega),\qquad
 \inf_{\Omega}v_0>0,\qquad \inf_{\Omega}\theta_0>0.
\end{equation}
There exists $\varepsilon_0>0$, depending only on
$R,c_v,\tilde\mu,\tilde\kappa,\beta$, the two initial positive infima
and $\|(v_0-1,u_0,\theta_0-1)\|_{H^1}$, such that, for
$0\leq\alpha\leq\varepsilon_0$, each of the problems
\eqref{11}--\eqref{15}, \eqref{16};
\eqref{11}--\eqref{15}, \eqref{160}; and
\eqref{11}--\eqref{15}, \eqref{1600} has a unique global strong solution
satisfying
\begin{equation}\label{1110}
 \begin{gathered}
 (v-1,u,\theta-1)\in C([0,\infty);H^1)
                    \cap L^\infty(0,\infty;H^1),\\
 v_t\in L^\infty(0,\infty;L^2)\cap L^2(0,\infty;H^1),\\
 (v_x,u_x,\theta_x,u_t,\theta_t,v_{xt},u_{xx},\theta_{xx})
                    \in L^2(\Omega\times(0,\infty)).
 \end{gathered}
\end{equation}
Moreover, for some $C_0>1$ with the same dependence,
\begin{equation}\label{18}
 C_0^{-1}\leq v(x,t)\leq C_0,\qquad
 C_0^{-1}\leq\theta(x,t)\leq C_0
 \quad ((x,t)\in\bar\Omega\times[0,\infty)),
\end{equation}
and, for every $2<p\leq\infty$,
\begin{equation}\label{110}
 \lim_{t\to\infty}\left\{
 \|(v-1,u,\theta-1)(t)\|_{L^p}
 +\|(v_x,u_x,\theta_x)(t)\|_{L^2}\right\}=0.
\end{equation}
\end{theorem}

\begin{remark}
For sufficiently small $\alpha\geq0$, Theorem~\ref{lm11} lowers the
regularity assumption on $v_0-1$ in \cite{dong2025} from $H^2$ to $H^1$.
No smallness is imposed on the initial perturbation, while the admissible
range of $\alpha$ depends on its $H^1$ norm and the initial positive infima.
The case of arbitrary $\alpha\geq0$ remains open in this argument.
\end{remark}

To continue the local solution of Lemma~\ref{lm31}, we need
time-independent positive lower and upper bounds for $v$ and $\theta$,
together with their first derivatives. Such pointwise bounds also underlie
the results in \cite{huang2019,li2016,ka1982,ka1977,nishida1986}.
The estimates in \cite{huang2019,sun2021,qian2024} use either a bounded
domain or constant viscosity. Here the derivatives of $\mu(\theta)$
produce additional terms, which must be controlled without an initial
$H^2$ bound for $v_0-1$.

Under the temporary bounds \eqref{20}, the factors $\mu_x$ and $\mu_t$
carry the small parameter $\alpha$. The representation of $v$, together
with the entropy estimate and the temperature oscillation on unit
intervals, gives the bounds for $v$. A comparison argument on fixed
time intervals then yields a positive lower bound for $\theta$.
For the specific-volume derivative, the identity used in
\cite{dong2025,sun2021} is
\begin{equation}\la{41}
 \left(\frac{\mu}{v}v_x\right)_t
 =u_t+P_x+\frac{\mu'}v(\theta_t v_x-\theta_xu_x).
\end{equation}
The term $\theta_t v_x$ leads to the dependence on
$\|v_0-1\|_{H^2}$ in those estimates. Instead, dividing \eqref{12} by
$\mu$ gives
\begin{equation}\label{2671}
 \left(\frac u\mu\right)_t+\frac{\mu' u\theta_t}{\mu^2}
       +\frac{P_x}\mu
 =\frac{\mu_x}\mu(\ln v)_t+(\ln v)_{xt}.
\end{equation}
Multiplication by $(\ln v)_x$ produces a cross term
$\int_\Omega uv_x/(\mu v)\,dx$ and replaces $\theta_t v_x$ by
$u\theta_t$ in the momentum identity. The resulting products are
estimated in Lemma~\ref{lm25} using only the temporary $H^1$ bounds
and the smallness of $\alpha$.

The temperature estimates are divided into $0\leq\beta\leq1$ and
$\beta>1$. In each case, the multiplier $\theta^\beta\theta_t$
controls $\theta^\beta\theta_x$, and the unit-interval estimate for
$\theta^{\beta+3/2}$ closes the temperature upper bound. The remaining
second derivatives follow from the equations. Section~\ref{2} establishes
these a priori estimates, and Section~\ref{3} proves
Theorem~\ref{lm11} by continuation and the large-time estimates.
\section{A priori estimates}\label{2}
\setcounter{equation}{0}
For $T<\infty$, let $X([0,T])$ be the class of positive solutions of
\eqref{11}--\eqref{15} with the prescribed boundary conditions, initial
data attained in $L^2$, and
\begin{equation}\label{strong-class}
 \begin{gathered}
 (v-1,u,\theta-1)\in L^\infty(0,T;H^1),\\
 v_t\in L^\infty(0,T;L^2)\cap L^2(0,T;H^1),\\
 (v_x,u_x,\theta_x,u_t,\theta_t,v_{xt},u_{xx},\theta_{xx})
                         \in L^2(\Omega\times(0,T)).
 \end{gathered}
\end{equation}
For $M\geq1$, set, as in the continuation argument,
\begin{equation}\label{20}
 \begin{aligned}
 X_M([0,T])=\bigg\{&(v,u,\theta)\in X([0,T]):
 M^{-1}\leq v,\theta\leq M,\\
 &\sup_{0\leq t\leq T}\|(v-1,u,\theta-1)(t)\|_{H^1}^2\\
 &\quad+\int_0^T\|(v_x,u_x,\theta_x,\theta_t,u_{xx},\theta_{xx})\|_{L^2}^2dt
 \leq M\bigg\}.
 \end{aligned}
\end{equation}
In this section, $(v,u,\theta)\in X_M([0,T])$ and $0\leq\alpha\leq1$. The constants $C$ depend only on the fixed coefficients, $\beta$ and the initial quantities in Theorem~\ref{lm11}, and are independent of $M,T,\alpha$. Constants written as $C(M)$ or $C(T,M)$ may additionally depend on the displayed quantities and are used only to justify the calculations at finite time. The further restrictions on $\alpha$ are collected in \eqref{all-smallness}.

The integrations below are first made for smooth approximations and then
passed to the class \eqref{strong-class}. The time-integrated identities,
including the initial values, are justified at the end of
Lemma~\ref{lm280} using only the finite bounds in \eqref{20}.
\subsection*{2.1. Lower order estimates}
We first give the representation of $v$.
\begin{lemma}\label{lm21}
For $x\in[k,k+1]$ and $0\leq s\leq t\leq T$,
\begin{equation}\label{v}
 v(x,t)=\frac{B_k(x,t)Y_k(t)}{B_k(x,s)Y_k(s)}v(x,s)
 +R\int_s^t\frac{B_k(x,t)Y_k(t)}{B_k(x,r)Y_k(r)}
                      \frac{\theta(x,r)}{\mu(x,r)}dr,
\end{equation}
where $k\in\mathbb Z$ on the line and $k\in\mathbb N\cup\{0\}$ on
the half-line, and
\begin{align}
 B_k(x,t)&=v_0(x)\exp\left\{\int_k^x
 \left(\frac u\mu(y,t)-\frac{u_0}{\mu_0}(y)
                          +\int_0^t h(y,r)dr\right)dy\right\},\label{b}\\
 Y_k(t)&=\exp\left\{\int_0^t\sigma(k,r)dr\right\},\label{y}\\
 h&=-u(\mu^{-1})_t-P(\mu^{-1})_x-\frac{\mu_xu_x}{\mu v}
 =\alpha\left(\frac{u\theta_t}{\mu\theta}
                +\frac{R\theta_x}{\mu v}
                -\frac{\theta_xu_x}{\theta v}\right),\label{h}\\
 \sigma&=\frac{u_x}v-\frac{R\theta}{\mu v}.\label{si}
\end{align}
\end{lemma}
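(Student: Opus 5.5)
We follow the Kazhikhov--Shelukhin derivation, adapted to the divided momentum equation \eqref{2671}. Dividing \eqref{12} by $\mu$ and using \eqref{11} gives $u_t/\mu+P_x/\mu=(\ln v)_{xt}+\frac{\mu_x}{\mu}(\ln v)_t$. We rewrite $u_t/\mu=(u/\mu)_t-u(\mu^{-1})_t$ and $P_x/\mu=(P/\mu)_x-P(\mu^{-1})_x$. We also use $\frac{\mu_x}{\mu}(\ln v)_t=\frac{\mu_x u_x}{\mu v}$. Collecting terms, this becomes
\[
(\ln v)_{xt}=\Bigl(\frac u\mu\Bigr)_t+\Bigl(\frac P\mu\Bigr)_x+h,
\]
with $h=-u(\mu^{-1})_t-P(\mu^{-1})_x-\frac{\mu_xu_x}{\mu v}$ as in \eqref{h}. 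For $\mu=\tilde\mu\theta^\alpha$ we have $(\mu^{-1})_t=-\alpha\theta_t/(\mu\theta)$, $(\mu^{-1})_x=-\alpha\theta_x/(\mu\theta)$ and $\mu_x/\mu=\alpha\theta_x/\theta$. Inserting these together with $P=R\theta/v$ gives the second expression in \eqref{h}.

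Next we integrate in $t$ over $[0,t]$ and then in $y$ over $[k,x]$. This gives
\[
\ln v(x,t)-\ln v(k,t)-\ln v_0(x)+\ln v_0(k)=\int_k^x\Bigl(\frac u\mu(y,t)-\frac{u_0}{\mu_0}(y)+\int_0^th\,dr\Bigr)dy+\int_0^t\Bigl(\frac P\mu(x,r)-\frac P\mu(k,r)\Bigr)dr .
\]
Exponentiating and using \eqref{b}, we get
\[
v(x,t)=\frac{B_k(x,t)}{v_0(k)}\,v(k,t)\exp\Bigl\{\int_0^t\frac{P}{\mu}(x,r)dr\Bigr\}\exp\Bigl\{-\int_0^t\frac{P}{\mu}(k,r)dr\Bigr\}.
\]
To remove $v(k,t)$, we note that $(\ln v)_t(k,t)=\frac{u_x}{v}(k,t)$, so $\ln v(k,t)-\ln v_0(k)=\int_0^t\frac{u_x}{v}(k,r)dr$. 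Hence
\[
\frac{v(k,t)}{v_0(k)}\exp\Bigl\{-\int_0^t\frac{P}{\mu}(k,r)dr\Bigr\}=Y_k(t),
\]
with $\sigma$ as in \eqref{si}. We therefore obtain $v(x,t)=B_k(x,t)Y_k(t)\exp\{\int_0^t\frac{R\theta}{\mu v}(x,r)dr\}$.

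Now we set $g(t)=\int_0^t\frac{R\theta}{\mu v}(x,r)dr$, so that $v=BYe^{g}$ and $g_t=\frac{R\theta}{\mu v}=\frac{R\theta}{\mu}\cdot\frac{1}{BYe^g}$. Then $(e^g)_t=\frac{R\theta}{\mu BY}$. Integrating from $s$ to $t$ gives $e^{g(t)}=e^{g(s)}+R\int_s^t\frac{\theta}{\mu BY}(x,r)dr$. Multiplying by $B_k(x,t)Y_k(t)$ and substituting $e^{g(s)}=v(x,s)/(B_kY_k)(x,s)$ yields exactly \eqref{v}.

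The main obstacle is justification in the class $X_M$, since the computation requires $(\ln v)_{xt}$ and pointwise traces at $y=k$. Each identity used is first established in integrated form. It holds a.e. and then for all $t$ by continuity, because $v_t,v_{xt}\in L^2$, $\theta_t,u_x,\theta_x\in L^2$, and the bounds $M^{-1}\le v,\theta\le M$ make $h\in L^1_{loc}$ and $\frac u\mu\in C([0,T];H^1)\subset C$. Alternatively, one can first prove the identity for smooth approximations and pass to the limit, as announced at the start of this section.
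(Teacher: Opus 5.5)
Your proposal is correct and follows essentially the paper's route. You rewrite the momentum equation as $(u/\mu)_t+h=\sigma_x$ (equivalently your $(\ln v)_{xt}=(u/\mu)_t+(P/\mu)_x+h$), integrate over a space--time rectangle with left edge $y=k$, exponentiate, and solve the linear ODE satisfied by $\exp\{R\int\theta/(\mu v)\,dr\}$. The only cosmetic difference is that you first integrate from time $0$ to obtain $v=B_kY_k\exp\{R\int_0^t\theta/(\mu v)\,dr\}$ and then restart at time $s$, whereas the paper integrates directly over $[k,x]\times[s,t]$.
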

\noindent{\it Proof.}
First, rewriting \eqref{12} and integrating over $[k,x]\times[s,t]$, we have
\begin{align}
 &\left(\frac u\mu\right)_t+h=\sigma_x,\notag\\
 &\int_k^x\left(\frac u\mu(y,t)-\frac u\mu(y,s)
                      +\int_s^t h(y,r)dr\right)dy\notag\\
 &\qquad=\ln\frac{v(x,t)}{v(x,s)}
   -R\int_s^t\frac{\theta}{\mu v}(x,r)dr
   -\int_s^t\sigma(k,r)dr,\notag\\
 &v(x,t)=\frac{B_k(x,t)Y_k(t)}{B_k(x,s)Y_k(s)}v(x,s)
       \exp\left\{R\int_s^t\frac\theta{\mu v}(x,r)dr\right\},\notag\\
 &\frac d{dt}\exp\left\{R\int_s^t\frac\theta{\mu v}(x,r)dr\right\}
 =\frac{R\theta(x,t)B_k(x,s)Y_k(s)}
          {\mu(x,t)v(x,s)B_k(x,t)Y_k(t)}.
\end{align}
Integrating the last equality in time and substituting it into the preceding formula gives \eqref{v}.
\qed

The three energy multipliers give the following estimate.
\begin{lemma}\label{lm22}
There holds
\begin{equation}\label{e0}
 \sup_{0\leq t\leq T}\int_\Omega
 \left\{\frac{u^2}2+R(v-\ln v-1)+c_v(\theta-\ln\theta-1)\right\}dx
 +\int_0^T V(t)dt\leq E_0,
\end{equation}
where
\begin{align}
 V(t)&=\int_\Omega\left\{\frac{\mu u_x^2}{v\theta}
              +\frac{\tilde\kappa\theta^\beta\theta_x^2}{v\theta^2}\right\}dx,
 \label{V}\\
 E_0&=2\int_\Omega\left\{\frac{u_0^2}2+R(v_0-\ln v_0-1)
                       +c_v(\theta_0-\ln\theta_0-1)\right\}dx.
 \label{E0}
\end{align}
\end{lemma}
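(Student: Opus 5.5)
We derive the estimate from three multiplier identities: the kinetic energy identity, the total energy equation \eqref{13}, and an entropy-type identity. We then integrate in time, using the boundary and far-field conditions to discard every flux term.

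\emph{Kinetic part.} First we multiply \eqref{12} by $u$ and integrate over $\Omega$. Using \eqref{11} and integrating by parts, with $u\to0$ at infinity and $u(0,t)=0$ on the half-line, we get
\[
\frac{d}{dt}\int_\Omega\frac{u^2}{2}dx-\int_\Omega\frac{R\theta}{v}u_xdx+\int_\Omega\frac{\mu u_x^2}{v}dx=0 .
\]

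\emph{Internal energy part.} Next we subtract this from \eqref{13}, using $e=c_v\theta+\mathrm{const}$. This gives the internal energy equation
\[
c_v\theta_t+\frac{R\theta}{v}u_x=\left(\frac{\kappa\theta_x}{v}\right)_x+\frac{\mu u_x^2}{v}.
\]

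\emph{Entropy part.} We multiply this equation by $1-\theta^{-1}$. The left side becomes $c_v(\theta-\ln\theta-1)_t+\frac{R\theta}{v}u_x-Ru_x/v$, and we rewrite $Ru_x/v=R(\ln v)_t$. Summing with the kinetic identity, the pressure work terms $\pm\int R\theta u_x/v$ cancel, while $R v_t=R u_x$ integrates to zero. This produces
\[
\frac{d}{dt}\int_\Omega\Big\{\frac{u^2}2+R(v-\ln v-1)+c_v(\theta-\ln\theta-1)\Big\}dx+\int_\Omega\Big\{\frac{\mu u_x^2}{v\theta}+\frac{\kappa\theta_x^2}{v\theta^2}\Big\}dx=\text{boundary terms}.
\]
The heat term comes from $-\int(\kappa\theta_x/v)_x\theta^{-1}\cdot(-1)$ after integrating by parts, since $((1-\theta^{-1}))_x=\theta_x/\theta^2$. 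The dissipation is exactly $V(t)$ with $\kappa=\tilde\kappa\theta^\beta$; note $\mu\geq0$ and has no other role.

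\emph{Boundary terms.} The boundary terms are $\big[\frac{\mu uu_x}{v}-\frac{R\theta u}{v}+(1-\theta^{-1})\frac{\kappa\theta_x}{v}+Ru\big]$, evaluated at infinity and at $x=0$. At infinity they vanish because $(v,u,\theta)\to(1,0,1)$ and $u_x,\theta_x\in H^1$ for a.e.\ $t$, so these derivatives decay. At $x=0$, $u=0$ kills the $u$-terms. The heat flux term vanishes either because $\theta_x(0,t)=0$ under \eqref{160}, or because $\theta(0,t)=1$ makes $1-\theta^{-1}=0$ under \eqref{1600}.

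\emph{Time integration.} Integrating over $[0,t]$ and taking the supremum gives \eqref{e0}, even with constant $E_0/2$; the factor $2$ in $E_0$ leaves room for approximation. Each integrand is finite for data in \eqref{190}, because $v-\ln v-1\sim(v-1)^2$ near $1$ and $v,\theta$ are bounded away from $0$ and $\infty$.

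\emph{Main obstacle.} The main difficulty is rigour rather than algebra: justifying the multiplications and integrations by parts in the class \eqref{strong-class} on an unbounded domain, and the decay at infinity. We handle this as the paper announces. We perform the computation for smooth approximations, or on a truncated domain with a cutoff $\phi(x/N)$ and let $N\to\infty$. The cutoff error terms are bounded by $C(M)N^{-1}\int_0^T\|(u,\theta-1)\|_{L^2}\|(u_x,\theta_x)\|_{L^2}dt$, which tends to zero by \eqref{20}. We then pass to the limit using the bounds $M^{-1}\le v,\theta\le M$.
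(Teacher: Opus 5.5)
Your argument is correct and is essentially the paper's proof. The multipliers $u$ and $1-\theta^{-1}$, together with your rewriting $Ru_x/v=R(\ln v)_t$ and $Rv_t=Ru_x$ (equivalent to the paper's multiplier $R(1-v^{-1})$ on \eqref{11}), give the identity \eqref{entropy-identity}, whose flux vanishes by the boundary and far-field conditions; as in the paper, the rigour is deferred to cutoffs as in Lemma~\ref{lm280}.

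One side remark of yours is wrong, though it does not affect the lemma. The bound does not hold with $E_0/2$. The integrated identity $A(t)+\int_0^tV\,ds=E_0/2$, where $A$ is the entropy integral, makes $A$ nonincreasing, so the left side of \eqref{e0} equals exactly $E_0/2+\int_0^TV\,dt$. The factor $2$ is therefore exactly what is needed to add the two separately bounded pieces, not slack for approximation. Also, your cutoff error should include the pressure flux $R(\theta/v-1)u$, which is harmless for fixed $T$.
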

\noindent{\it Proof.}
Subtracting the kinetic energy identity from \eqref{13} gives
\begin{align}
 \left(\frac{u^2}2\right)_t+(Pu)_x
 &=Pu_x+\left(\frac{\mu uu_x}v\right)_x-\frac{\mu u_x^2}v,
 \notag\\
 c_v\theta_t+\frac{R\theta u_x}v
 &=\left(\frac{\tilde\kappa\theta^\beta\theta_x}v\right)_x
                                     +\frac{\mu u_x^2}v.
 \label{13a}
\end{align}
Multiplying \eqref{11}, \eqref{12} and \eqref{13a} by
$R(1-v^{-1})$, $u$ and $1-\theta^{-1}$, respectively, we obtain
\begin{align}
 &\left\{\frac{u^2}2+R(v-\ln v-1)+c_v(\theta-\ln\theta-1)\right\}_t
 +\frac{\mu u_x^2}{v\theta}
 +\frac{\tilde\kappa\theta^{\beta-2}\theta_x^2}v\notag\\
 &\qquad=\left\{\frac{\mu uu_x}v
 +\frac{\tilde\kappa\theta^\beta(1-\theta^{-1})\theta_x}v
 -R\left(\frac\theta v-1\right)u\right\}_x.
 \label{entropy-identity}
\end{align}
Integrating the resulting
equality over $\Om\times (0,T)$ yields \eqref{e0}.
\qed

Then, we use Lemmas~\ref{lm21} and \ref{lm22} to estimate $v$.
\begin{lemma}\label{lm23}
There are positive constants $C_1,C_2,C_3$, independent of $M,T,\alpha$,
such that
\begin{equation}\label{volume-smallness}
 0\leq\alpha\leq M^{-1},\qquad
 \alpha C_1(1+M)^3\leq\min\{1,C_2/4\}
\end{equation}
imply
\begin{equation}\label{vvv}
 C_3^{-1}\leq v(x,t)\leq C_3
 \quad ((x,t)\in\bar\Omega\times[0,T]).
\end{equation}
\end{lemma}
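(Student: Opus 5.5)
We start from the representation \eqref{v} in Lemma~\ref{lm21} with $s=0$, and for each unit interval $[k,k+1]$ we bound the factors $B_k$ and $Y_k$ separately. For $B_k$, the exponent consists of $\int_k^x (u/\mu)(y,t)\,dy - \int_k^x (u_0/\mu_0)\,dy$ and $\int_k^x\int_0^t h\,dr\,dy$.

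\emph{Step 1: the factor $B_k$.} Under \eqref{20}, $\mu=\tilde\mu\theta^\alpha$ with $\alpha\le M^{-1}$ and $M^{-1}\le\theta\le M$. Hence $M^{-\alpha}\le\theta^\alpha\le M^{\alpha}\le e^{1/e}$, so $\mu$ is bounded above and below by constants independent of $M$. The energy estimate \eqref{e0} then gives $\int_k^{k+1}|u|/\mu\,dy\le C$ uniformly in $t$. For $h$, each term in \eqref{h} carries a factor $\alpha$. By Cauchy--Schwarz and \eqref{20}, the space-time integral $\int_0^T\int_\Omega |h|$ is bounded by $\alpha C(1+M)^3$. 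For example,
\[
\int_0^T\!\!\int_\Omega \Big|\frac{u\theta_t}{\mu\theta}\Big| \le CM\Big(\int_0^T\!\|u\|_{L^2}^2\Big)^{1/2}\Big(\int_0^T\!\|\theta_t\|_{L^2}^2\Big)^{1/2},
\]
and $\int_0^T\|u\|_{L^2}^2$ can be replaced by $\sup_t\|u\|_{L^2}\int_0^T\|u\|_{L^\infty}$. If time integrability of $u$ is a problem, I would instead use $\int_0^T\|u\|_{L^\infty}^2\le C\int_0^T\|u\|\|u_x\|\le CM$. The other two terms are handled the same way with $\|\theta_x\|,\|u_x\|$ from \eqref{20}. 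Under \eqref{volume-smallness}, $\alpha C_1(1+M)^3\le 1$, so $C^{-1}\le B_k(x,t)/B_k(x,r)\le C$ with $C$ independent of $M,T,\alpha$.

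\emph{Step 2: the factor $Y_k$.} Integrate \eqref{v} over $x\in[k,k+1]$ and use $\int_k^{k+1}v\,dx\in[C^{-1},C]$, which follows from \eqref{e0} via Jensen applied to $v-\ln v-1$. Also use $\int_k^{k+1}\theta\,dx\ge C^{-1}$, which holds similarly. This gives
\[
C^{-1}\le Y_k(t)+\int_0^t\frac{Y_k(t)}{Y_k(r)}dr\le C.
\]
The standard Kazhikhov--Shelukhin / Jiang argument should then yield $Y_k(t)/Y_k(r)\le Ce^{-C_2(t-r)}$. This is where $C_2$ enters. To get it, I would write $\ln(Y_k(t)/Y_k(r))=\int_r^t\sigma(k,\cdot)$ and average $\sigma$ over $[k,k+1]$ using the integrated form of $(u/\mu)_t+h=\sigma_x$. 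Then $\int_r^t\sigma(k)=\int_r^t\int_k^{k+1}\sigma\,dx+O(1)+O(\alpha C_1(1+M)^3)$. Next, $\int_k^{k+1}\sigma=\frac{d}{dt}\int\ln v-\int R\theta/(\mu v)$, and the last integrand is bounded below by $C^{-1}$ on most of each time interval. Here I would use $\theta\ge \tfrac12$ on a set of positive measure, obtained from the entropy estimate \eqref{e0}, together with the oscillation of $\theta$ on unit intervals, controlled by $\int_0^T V\,dt$.

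\emph{Step 3: conclusion.} Plugging Steps 1 and 2 into \eqref{v} gives $v\le C+C\int_0^t e^{-C_2(t-r)}\theta\,dr$. The upper bound for $v$ needs the usual bound $\max_{[k,k+1]}\theta\le C(1+V)$-type control, i.e.\ $\int_0^t e^{-C_2(t-r)}\max\theta\,dr\le C$ via \eqref{e0}. The lower bound follows since $v\ge C^{-1}Y_k(t)+C^{-1}\int_0^tY_k(t)/Y_k(r)\,\theta/\mu\,dr$, and $\theta$ is bounded below in the averaged sense. The error terms of size $\alpha C_1(1+M)^3\le C_2/4$ can be absorbed into the decay rate $C_2$, which explains the second condition in \eqref{volume-smallness}.

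The main obstacle is Step 2, namely obtaining the exponential decay of $Y_k(t)/Y_k(r)$ with a rate independent of $M$ while the $h$-perturbation grows linearly in $t-r$. The smallness $\alpha C_1(1+M)^3\le C_2/4$ is exactly what allows this.
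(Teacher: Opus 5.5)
Your outline has the right architecture. The decay part of Step~2 (writing $\ln(Y_k(t)/Y_k(r))=\int_r^t\sigma(k,\cdot)$, averaging $\sigma$ over $[k,k+1]$, and counting via $V$ the times at which $\min_{[k,k+1]}\theta<\alpha_1/2$) is the paper's \eqref{stress-average}. Your treatment of $\int_r^t\!\int_k^{k+1}u_x/v$ as $\bigl[\int_k^{k+1}\ln v\,dx\bigr]_r^t$, bounded via \eqref{e0} and Jensen, is a valid substitute for the paper's Young inequality.

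Step~1, however, is wrong. \eqref{20} controls only $\sup_t\|u\|_{L^2}$ and $\int_0^T\|(u_x,\theta_x,\theta_t)\|_{L^2}^2dt$. So the terms $u\theta_t/(\mu\theta)$ and $R\theta_x/(\mu v)$ of $h$ each cost a factor $\sqrt{t-r}$; the second term alone gives $\int_r^t\|\theta_x\|_{L^2}\le\sqrt{(t-r)M}$. Your fallback $\int_0^T\|u\|_{L^2}\|u_x\|_{L^2}dt\le CM$ also fails, since $\int_0^T\|u_x\|_{L^2}dt$ is only bounded by $\sqrt{TM}$. The correct estimate is $\int_r^t\!\int_k^{k+1}|h|\le\epsilon(1+t-r)$ with $\epsilon:=\alpha C_1(1+M)^3$, i.e.\ $C^{-1}e^{-\epsilon(t-r)}\le B_k(x,t)/B_k(x,r)\le Ce^{\epsilon(t-r)}$ as in \eqref{B-ratio}. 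Your last paragraph concedes this growth, so the proposal is internally inconsistent.

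Consequently, integrating \eqref{v} from $s=0$ over the cell does not give $C^{-1}\le Y_k(t)+\int_0^tY_k(t)/Y_k(r)\,dr\le C$, because the factors $e^{\pm\epsilon t}$ accumulate over $[0,t]$. For the same reason, the inequality $v\ge C^{-1}Y_k(t)+C^{-1}\int_0^t\cdots$ in Step~3 is unjustified.

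The upper bound does survive: the product kernel is $\le Ce^{-(C_2-2\epsilon)(t-r)}\le Ce^{-C_2(t-r)/2}$. This needs two adjustments. First, use $\min_{[k,k+1]}\theta$ on the whole cell together with $\int_k^{k+1}v^{-1}dx\ge\alpha_2^{-1}$ (Jensen), since $v$ is not yet bounded above. Second, close with the bound the oscillation argument actually gives, $\theta(x,t)\le C+CV(t)\max_{[k,k+1]}v(\cdot,t)$, because $\int_k^{k+1}\theta v\,dx$ must be bounded by $\alpha_2\max v$. Then apply Gronwall in $\max v$ with $\int_0^TV\le E_0$; the bound $\max\theta\le C(1+V)$ is not available.

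The more serious gap is the lower bound for $v$, which has no actual argument. The decay estimate bounds the kernel only from above. A lower bound for $\theta$ ``in the averaged sense'' is also insufficient, because \eqref{v} involves $\theta(x,r)$ at the fixed point $x$. What is missing is a lower bound for the kernel on windows of fixed length.

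For $0\le r-s\le T_0$, the ratio $B_k(x,r)/B_k(x,s)$ lies in $[C^{-1},C]$ precisely because $\epsilon\le1$. This windowed control, not a uniform-in-time bound, is what that part of \eqref{volume-smallness} provides. Divide \eqref{v} by its kernel, integrate over $[k,k+1]$, and use $\int_k^{k+1}v\ge\alpha_1$ and $\int_k^{k+1}\theta\le\alpha_2$. This gives $Y_k(s)/Y_k(r)\le C+C\int_s^rY_k(s)/Y_k(\tau)\,d\tau$, so by Gronwall the full kernel is $\ge C^{-1}$ on the window.

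Next, choose $T_0$ so large that \eqref{cell-window} gives $\int_{t-T_0}^t\min_{[k,k+1]}\theta\,dr\ge c>0$. This is possible because the total measure of the times where $\min_{[k,k+1]}\theta<\alpha_1/2$ is bounded by $CE_0$, independently of $T$ and $k$. Taking $s=t-T_0$ then yields $v(x,t)\ge C^{-1}\int_{t-T_0}^t\min_{[k,k+1]}\theta\,dr\ge c$. For $t\le T_0$, the term containing $v_0$ gives $v\ge C^{-1}\inf v_0$.
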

\noindent{\it Proof.}
If $E_0=0$, \eqref{entropy-identity} gives $(v,u,\theta)=(1,0,1)$.
Suppose $E_0>0$. Jensen's inequality gives
\begin{align}
 &R\left\{\int_k^{k+1}v\,dx
           -\ln\left(\int_k^{k+1}v\,dx\right)-1\right\}\notag\\
 &\quad+c_v\left\{\int_k^{k+1}\theta\,dx
           -\ln\left(\int_k^{k+1}\theta\,dx\right)-1\right\}
 \leq\frac{E_0}2,\notag\\
 &\alpha_1\leq\int_k^{k+1}v\,dx\leq\alpha_2,
 \qquad
 \alpha_1\leq\int_k^{k+1}\theta\,dx\leq\alpha_2,
 \label{v0}
\end{align}
where $0<\alpha_1<1<\alpha_2$ are the two roots of
$y-\ln y-1=E_0/(2\min\{R,c_v\})$. In particular, there are
$a_k(t),b_k(t)\in[k,k+1]$ for which
\begin{equation}\label{v1}
 \alpha_1\leq v(a_k(t),t)\leq\alpha_2,
 \qquad \alpha_1\leq\theta(b_k(t),t)\leq\alpha_2.
\end{equation}
Since $\alpha\leq1/M$,
\begin{equation}\label{ga}
 \tilde\mu/e\leq\tilde\mu M^{-1/M}
 \leq\mu\leq\tilde\mu M^{1/M}\leq e\tilde\mu.
\end{equation}
From \eqref{20}, \eqref{e0} and \eqref{h},
\begin{align}
 \left|\int_k^x\left(\frac u\mu(y,t)-\frac u\mu(y,s)\right)dy\right|
 &\leq\frac e{\tilde\mu}(\|u(t)\|_{L^2}+\|u(s)\|_{L^2})\leq C,
 \notag\\
 \int_s^t\int_k^{k+1}|h|\,dxdr
 &\leq\alpha\left\{\frac{eM}{\tilde\mu}
          \sup_r\|u(r)\|_{L^2}\int_s^t\|\theta_t\|_{L^2}dr
     +\frac{eRM}{\tilde\mu}\int_s^t\|\theta_x\|_{L^2}dr\right.\notag\\
 &\hspace{42mm}\left.+M^2\int_s^t\|u_x\|_{L^2}\|\theta_x\|_{L^2}dr\right\}
 \notag\\
 &\leq\alpha\left\{\frac e{\tilde\mu}
                 (M^2+RM^{3/2})\sqrt{t-s}+M^3\right\}
 \notag\\
 &\leq\alpha C_1(1+M)^3(1+t-s),\label{h-bound}\\
 C^{-1}e^{-\alpha C_1(1+M)^3(t-s)}
 &\leq\frac{B_k(x,t)}{B_k(x,s)}
       \leq Ce^{\alpha C_1(1+M)^3(t-s)}.
 \label{B-ratio}
\end{align}

Whenever $\min_{[k,k+1]}\theta<\alpha_1/2$, \eqref{v1} implies
\begin{align}
 \int_{\alpha_1/2}^{\alpha_1}z^{\beta/2-1}dz
 &\leq\int_k^{k+1}\theta^{\beta/2-1}|\theta_x|dx
 \leq\left(\frac{\alpha_2}{\tilde\kappa}V(t)\right)^{1/2},
 \notag\\
 V(t)&\geq
 \frac{\tilde\kappa}{\alpha_2}
       \left(\int_{\alpha_1/2}^{\alpha_1}z^{\beta/2-1}dz\right)^2>0,
 \label{bad-threshold}\\
 \int_s^t\left(\frac{\alpha_1}2
              -\min_{[k,k+1]}\theta(\cdot,r)\right)_+dr
 &\leq\frac{\alpha_1\alpha_2}{2\tilde\kappa}
       \left(\int_{\alpha_1/2}^{\alpha_1}z^{\beta/2-1}dz\right)^{-2}
                            \int_s^tV(r)dr,\notag\\
 \int_s^t\min_{[k,k+1]}\theta(\cdot,r)dr
 &\geq\frac{\alpha_1}2\left[t-s-\frac{E_0\alpha_2}{2\tilde\kappa}
       \left(\int_{\alpha_1/2}^{\alpha_1}z^{\beta/2-1}dz\right)^{-2}\right].
 \label{cell-window}
\end{align}
For $\beta=0$, the integral in \eqref{bad-threshold} is $\ln2$.
Integrating $(u/\mu)_t+h=\sigma_x$ over a unit interval gives
\begin{align}
 &\int_s^t\sigma(k,r)dr
 =\int_s^t\int_k^{k+1}\left(\frac{u_x}v
                         -\frac{R\theta}{\mu v}\right)dxdr\notag\\
 &\quad-\int_k^{k+1}\int_k^x
             \left(\frac u\mu(y,t)-\frac u\mu(y,s)\right)dydx
       -\int_s^t\int_k^{k+1}\int_k^x h(y,r)dydxdr\notag\\
 &\quad\leq\frac1{2R}\int_s^t\int_k^{k+1}\frac{\mu u_x^2}{v\theta}dxdr
       -\frac R2\int_s^t\int_k^{k+1}\frac\theta{\mu v}dxdr
       +C+\alpha C_1(1+M)^3(t-s)\notag\\
 &\quad\leq C+\alpha C_1(1+M)^3(t-s)
       -\frac R{2e\tilde\mu\alpha_2}
                       \int_s^t\min_{[k,k+1]}\theta(\cdot,r)dr\notag\\
 &\quad\leq C-\bigl(C_2-\alpha C_1(1+M)^3\bigr)(t-s),
 \label{stress-average}
\end{align}
where $C_2=R\alpha_1/(4e\tilde\mu\alpha_2)$. Consequently,
\begin{align}
 0<\frac{B_k(x,t)Y_k(t)}{B_k(x,s)Y_k(s)}
 &\leq C e^{-C_2(t-s)/2},\label{kernel-upper}\\
 v(x,t)&\leq Ce^{-C_2t/2}
                +C\int_0^t e^{-C_2(t-r)/2}\theta(x,r)dr.
 \label{v-upper-integral}
\end{align}
Also,
\begin{align}
 &\left|\theta^{(\beta+1)/2}(x,t)
          -\theta^{(\beta+1)/2}(b_k(t),t)\right|\notag\\
 &\qquad\leq\frac{\beta+1}2\int_k^{k+1}
                            \theta^{(\beta-1)/2}|\theta_x|dx\notag\\
 &\qquad\leq C V(t)^{1/2}
          \left(\int_k^{k+1}\theta v\,dx\right)^{1/2}
 \leq C\left(V(t)\max_{[k,k+1]}v\right)^{1/2},\notag\\
 &\theta(x,t)\leq C+C V(t)\max_{[k,k+1]}v,\notag\\
 &\max_{[k,k+1]}v(\cdot,t)
 \leq C+C\int_0^t V(r)\max_{[k,k+1]}v(\cdot,r)dr\notag\\
 &\qquad
 \leq C\exp\left(C\int_0^t V(r)dr\right)\leq C.
 \label{v-upper}
\end{align}

Choose $T_0$ such that
\begin{equation*}
 T_0>1+\frac{2E_0\alpha_2}{\tilde\kappa}
       \left(\int_{\alpha_1/2}^{\alpha_1}z^{\beta/2-1}dz\right)^{-2}.
\end{equation*}
For $s\leq r\leq\min\{s+T_0,T\}$,
\eqref{B-ratio} gives
$C^{-1}\leq B_k(x,r)/B_k(x,s)\leq C$, with $C$ independent of $s$.
Dividing \eqref{v} by its kernel and integrating in $x$, we obtain
\begin{align}
 &\frac{Y_k(s)}{Y_k(r)}\int_k^{k+1}
                  \frac{B_k(x,s)}{B_k(x,r)}v(x,r)dx\notag\\
 &\quad=\int_k^{k+1}v(x,s)dx
       +R\int_s^r\frac{Y_k(s)}{Y_k(\tau)}
           \int_k^{k+1}\frac{B_k(x,s)}{B_k(x,\tau)}
                          \frac{\theta(x,\tau)}{\mu(x,\tau)}dx\,d\tau,
 \notag\\
 &\frac{Y_k(s)}{Y_k(r)}
 \leq C+C\int_s^r\frac{Y_k(s)}{Y_k(\tau)}d\tau
 \leq Ce^{C(r-s)}\leq C,\notag\\
 &\frac{B_k(x,r)Y_k(r)}{B_k(x,s)Y_k(s)}\geq C^{-1}
 \qquad(0\leq r-s\leq T_0).\label{kernel-lower}
\end{align}
Thus
\begin{align}
 v(x,t)&\geq C^{-1}\inf_\Omega v_0,
                   \qquad 0\leq t\leq\min\{T_0,T\},\notag\\
 v(x,t)&\geq C^{-1}\int_{t-T_0}^t
                      \min_{[k,k+1]}\theta(\cdot,r)dr,
                   \qquad T_0\leq t\leq T,\notag\\
 &\geq\frac{\alpha_1}{2C}
              \left[T_0-\frac{E_0\alpha_2}{2\tilde\kappa}
       \left(\int_{\alpha_1/2}^{\alpha_1}z^{\beta/2-1}dz\right)^{-2}\right]>0.
 \label{v-lower}
\end{align}
The constants are independent of $k$, and \eqref{vvv} follows.

We shall also use the following lower bound:
\begin{equation}\label{theta-lower}
 \theta(x,t)\geq C_4^{-1}\quad
 ((x,t)\in\bar\Omega\times[0,T]),
\end{equation}
where $C_4>1$ depends only on the same quantities.

Completing the square in \eqref{13a}, we have
\begin{align}
 \theta_t-\frac{\tilde\kappa}{c_v}
                   \left(\frac{\theta^\beta\theta_x}v\right)_x
 &=\frac\mu{c_vv}\left(u_x-\frac{R\theta}{2\mu}\right)^2
                  -\frac{R^2\theta^{2-\alpha}}{4c_v\tilde\mu v}
 \geq-C\theta^{2-\alpha},\notag\\
 \theta^{2-\alpha}&\leq\theta
 \qquad(0<\theta\leq1/2,\ 0\leq\alpha\leq1).
 \label{theta-comparison}
\end{align}
For $s\in[0,T]$, let $q$ solve
\begin{equation}
 q'=-Cq,\qquad q(s)=\min\{1/2,\inf_\Omega\theta(\cdot,s)\}.
\end{equation}
Since $0<q\leq1/2$, \eqref{20} gives
\begin{align}
 \int_\Omega\bigl((q-\theta)_++(q-\theta)_+^2\bigr)dx
 &\leq|\{\theta<q\}|
 \leq4\int_\Omega(\theta-1)^2dx\leq4M,\notag\\
 \frac12\frac d{dt}\int_\Omega(q-\theta)_+^2dx
 &=\int_\Omega(q-\theta)_+(q'-\theta_t)dx.
 \label{moving-level-chain}
\end{align}
Multiplying \eqref{theta-comparison} by $-(q-\theta)_+$, we obtain
\begin{align}
 \frac12\frac d{dt}\|(q-\theta)_+\|_{L^2}^2
 &\leq-\frac{\tilde\kappa}{c_v}
             \int_{\{\theta<q\}}\frac{\theta^\beta\theta_x^2}v dx
          +C\int_{\{\theta<q\}}(q-\theta)(\theta-q)dx
 \leq0,\notag\\
 \inf_\Omega\theta(\cdot,t)
 &\geq\min\{1/2,\inf_\Omega\theta(\cdot,s)\}e^{-C(t-s)}.
 \label{theta-forward}
\end{align}
There is no boundary contribution, since $(q-\theta)_+=0$ in
\eqref{1600} and $\theta_x=0$ in \eqref{160}.
For $T_0\leq t\leq T$, the entropy estimate gives
\begin{equation*}
 \frac1{T_0}\int_{t-T_0}^tV(r)dr\leq\frac{E_0}{2T_0}.
\end{equation*}
Choose a time $s\in(t-T_0,t)$ such that
\begin{equation*}
 V(s)\leq\frac{E_0}{T_0}
 <\frac{\tilde\kappa}{\alpha_2}
       \left(\int_{\alpha_1/2}^{\alpha_1}z^{\beta/2-1}dz\right)^2.
\end{equation*}
For every $k$ and $x\in[k,k+1]$, \eqref{v0}--\eqref{v1} imply
\begin{align*}
 \left|\int_{\theta(x,s)}^{\theta(b_k(s),s)}z^{\beta/2-1}dz\right|
 &\leq\int_k^{k+1}\theta^{\beta/2-1}|\theta_x|dx\\
 &\leq\left(\frac{\alpha_2}{\tilde\kappa}V(s)\right)^{1/2}
 <\int_{\alpha_1/2}^{\alpha_1}z^{\beta/2-1}dz.
\end{align*}
Since $\theta(b_k(s),s)\geq\alpha_1$, this gives
$\inf_\Omega\theta(\cdot,s)\geq\alpha_1/2$. The same time $s$
works for all unit intervals.
Applying \eqref{theta-forward} gives
\begin{align}
 \inf_\Omega\theta(\cdot,t)
 &\geq\min\{1/2,\inf_\Omega\theta_0\}e^{-CT_0},
                                  &&0\leq t\leq T_0,\notag\\*
 \inf_\Omega\theta(\cdot,t)
 &\geq\min\{1/2,\alpha_1/2\}e^{-CT_0},
                                  &&T_0\leq t\leq T.
\end{align}
\nopagebreak[4]
This proves \eqref{theta-lower}.\qed

For the temperature estimates, denote
\begin{equation}\label{rt}
 R_T=1+\sup_{\bar\Omega\times[0,T]}\theta(x,t).
\end{equation}
We divide the estimates into two cases: $0\leq\beta\leq1$ and $\beta>1$.

\subsubsection*{2.1.1. Case $0\leq\beta\leq1$}
\begin{lemma}\label{lm24}
Under \eqref{volume-smallness}, if $0\leq\beta\leq1$, then
\begin{align}
 &\sup_{0\leq t\leq T}\int_\Omega
       \bigl((\theta-1)^2+\theta u^2\bigr)dx
 +\int_0^T\int_\Omega(\theta u_x^2+\theta^\beta\theta_x^2)dxdt
 \leq C R_T^{2\beta/(1+\beta)},\label{240}\\
 &\sup_{0\leq t\leq T}\int_\Omega u^4dx
 +\int_0^T\int_\Omega u^2u_x^2dxdt
 +\int_0^T\|(\theta-3/2)_+\|_{L^\infty}^2dt
 \leq C R_T^{\beta(1-\beta)/(1+\beta)}.\label{241}
\end{align}
\end{lemma}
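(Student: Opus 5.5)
We follow the scheme of Li--Liang \cite{li2016} and Li--Xu \cite{qian2024}, now with $\mu=\tilde\mu\theta^\alpha$. By \eqref{ga}, \eqref{vvv} and \eqref{theta-lower}, $\mu$ is comparable to $\tilde\mu$ and $v,\theta$ are bounded below, with constants independent of $M,T,\alpha$. For \eqref{240} we multiply \eqref{13a} by $(\theta-1)$, multiply \eqref{12} by $u\theta$ (equivalently, combine the energy identity with $\theta$ times the kinetic energy), and integrate over $\Omega\times(0,t)$. This gives
\begin{align*}
 &\frac{c_v}2\int_\Omega(\theta-1)^2dx
 +\int_0^t\int_\Omega\frac{\tilde\kappa\theta^\beta\theta_x^2}v\,dxdr\\
 &\quad\leq C+\int_0^t\int_\Omega\frac{\mu u_x^2}v|\theta-1|\,dxdr
 +C\int_0^t\int_\Omega\theta|u_x||\theta-1|\,dxdr.
\end{align*}
The $\theta u^2$ part comes from the $u\theta$ multiplier; its flux and cross terms $\theta_x u u_x$ are absorbed by Cauchy--Schwarz.

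The main dissipative term is $\int\theta u_x^2$. We write
\[
\frac{\mu u_x^2}{v}|\theta-1|\leq C\theta^2\cdot\frac{\mu u_x^2}{v\theta},
\]
so it is bounded by $\|\theta\|_{L^\infty}^2 V(t)$, which is integrable by Lemma~\ref{lm22}. To obtain the exponent $2\beta/(1+\beta)$ rather than $2$, we interpolate with the oscillation estimate of \eqref{v-upper}:
\[
\max_{[k,k+1]}\theta^{(\beta+1)/2}\leq C+C\Bigl(V\max v\Bigr)^{1/2},
\]
which yields $\|\theta\|_{L^\infty}\leq C+CV(t)^{1/(1+\beta)}$. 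Interpolating $\theta$ between $R_T$ and this bound, and then applying Young's inequality, gives
\[
\int_0^T\|\theta\|_{L^\infty}\int_\Omega\frac{u_x^2}{\dots}\,dxdt\leq CR_T^{2\beta/(1+\beta)}
\]
when $\beta\leq1$. The conduction term $\int_0^T\!\!\int\theta^\beta\theta_x^2$ is handled the same way: $\theta^{\beta}\theta_x^2=\theta^2\cdot\theta^{\beta-2}\theta_x^2\leq\|\theta\|_{L^\infty}^2\theta^{\beta-2}\theta_x^2$ is controlled by $V$. This is the point where $\beta\leq1$ is needed, so that the Young-exponent bookkeeping closes with exactly $R_T^{2\beta/(1+\beta)}$.

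For \eqref{241}, we multiply \eqref{12} by $u^3$. After integrating by parts, the viscous term gives $3\int\mu u^2u_x^2/v$, and the pressure term gives $3\int (R\theta/v-R)u^2u_x$, which is absorbed by Cauchy--Schwarz at the cost of
\[
C\int_0^T\!\!\int(\theta-1)^2u^2\,dxdt\leq C\int_0^T\|(\theta-3/2)_+\|_{L^\infty}^2\int u^2\,dt+C\int_0^T\!\!\int(\theta-1)^2\,dxdt\cdot\sup\|u\|^2_{L^\infty}.
\]
The region $\theta\le 3/2$ is controlled via the entropy estimate: there $\theta-1$ is comparable to $\theta-\ln\theta-1$, and Lemma~\ref{lm22} bounds it. The remaining quantity $\|(\theta-3/2)_+\|_{L^\infty}^2$ is estimated by
\[
\|(\theta-3/2)_+\|_{L^\infty}^2\leq C\int_\Omega(\theta-3/2)_+\,\theta^{\beta/2-1}\cdots|\theta_x|\,dx.
\]
Using \eqref{240}, this gives
\[
\int_0^T\|(\theta-3/2)_+\|_{L^\infty}^2dt\leq C\int_0^T\!\!\int\theta^{1-\beta}\cdot\theta^\beta\theta_x^2\theta^{-1}\ldots\leq C R_T^{1-\beta}V\text{-type terms}.
\]
We then combine it with the bound $\theta^{\beta}\theta_x^2\leq CR_T^{2\beta/(1+\beta)}$ from \eqref{240} by interpolation; the intended result is $R_T^{\beta(1-\beta)/(1+\beta)}$.

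The new terms from $\mu_x=\alpha\mu\theta_x/\theta$ in the $u^3$ and $u\theta$ multipliers carry the factor $\alpha$. Using \eqref{20}, they are bounded by $\alpha C(M)$, and \eqref{volume-smallness} makes them at most a constant.

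The main obstacle is getting the precise powers of $R_T$. The plan is to find the right interpolation between the $V$-weighted bound and the $L^\infty$ bound on $\theta$. We first try the split $\theta\leq 2$ versus $\theta>2$, use $\theta^{(\beta+1)/2}$-oscillation on the set $\theta>2$, and adjust the weights to match the stated exponents.
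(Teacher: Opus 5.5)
There is a genuine gap, and it is exactly the ``main obstacle'' you name at the end. In \eqref{240} you keep the viscous heating $\int_0^t\!\int_\Omega\mu u_x^2|\theta-1|/v$ on the right and bound it by $\|\theta\|_{L^\infty}^2V(t)$. This gives only $CR_T^2$.

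Interpolating with $\|\theta\|_{L^\infty}\le C+CV(t)^{1/(1+\beta)}$ cannot improve this, because \eqref{e0} controls $V$ only in $L^1(0,T)$. Writing $\|\theta\|_{L^\infty}^2\le R_T^{2-\gamma}\|\theta\|_{L^\infty}^{\gamma}$ leaves $\int_0^TV^{1+\gamma/(1+\beta)}dt$, which is uncontrolled for every $\gamma>0$. For $\beta=0$ the lemma claims a bound independent of $R_T$, which this route can never give. The same objection applies to your bound of $\int\!\int\theta^\beta\theta_x^2$ by $\|\theta\|_{L^\infty}^2V$, since that is the very quantity being estimated.

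In addition, the whole-space multipliers $\theta-1$ and $u\theta$ produce terms like $\int_0^t\!\int_\Omega\theta|u_x||\theta-1|$ on $\{\theta\le2\}$. After Cauchy--Schwarz against $V$ these leave $\int_0^T\|\theta-1\|_{L^2}^2dt$, which is not controlled uniformly in $T$.

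The missing idea is the one in \eqref{246}--\eqref{hot-identity}: test \eqref{13a} with $(\theta-2)_+$ and \eqref{12} with $2u(\theta-2)_+$, and add. The weighted kinetic dissipation $2\int\!\int\mu u_x^2(\theta-2)_+/v$ then absorbs the viscous heating. The price is the term $\int\!\int_{\{\theta>2\}}u^2\theta_t$, which your outline never mentions. Replacing $\theta_t$ via \eqref{13a}, the conduction part $u^2(\tilde\kappa\theta^\beta\theta_x/v)_x$ is integrated by parts with the cutoff $\varphi_\eta$; the interface term has a good sign. This gives $\varepsilon\int\!\int\theta^\beta\theta_x^2+CR_T^\beta\int\!\int u^2u_x^2$, and it is where the factor $R_T^\beta$ comes from. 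All other terms live on $\{\theta>2\}$, where $\theta\le4(\theta-3/2)_+$. They are bounded by $\int_0^t\|(\theta-3/2)_+\|_{L^\infty}^2ds$ and $\int\!\int u^2u_x^2$, which yields \eqref{hot-bound}.

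For \eqref{241} the $u^3$ multiplier is right, but your bound of the pressure term by $\int_0^T\!\int_\Omega(\theta-1)^2dxdt\cdot\sup_t\|u\|_{L^\infty}^2$ fails. The first factor is not bounded uniformly in $T$, and the second is controlled only by $M$. On $\{\theta\le2\}$ you need $|(\theta/v-1)u^2u_x|\le C\theta^{-1}u_x^2+C\|u\|_{L^\infty}^4\bigl((\theta-1)^2+(v-1)^2\bigr)$, with the spatial integral bounded for each $t$ by \eqref{e0}. You also need $\|u\|_{L^\infty}^4\le\delta\int_\Omega\mu u^2u_x^2/v\,dx+C(\delta)V(t)$ from \eqref{quartic-sobolev}.

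Finally, \eqref{240} and \eqref{241} cannot be proved one after the other; they are coupled. Combining \eqref{hot-bound} with \eqref{quartic-bound} bounds the left side of \eqref{240} by $CR_T^\beta\bigl(1+\int_0^T\|(\theta-3/2)_+\|_{L^\infty}^2dt\bigr)$. Then \eqref{hot-peak} gives $\|(\theta-3/2)_+\|_{L^\infty}^2\le C\int_\Omega\theta^{-1}\theta_x^2dx$. The crude bound $\theta^{-1}\le R_T^{1-\beta}\theta^{\beta-2}$ that your sketch points to would give only $R_T$ and $R_T^{1-\beta}$.

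The stated exponents need the Young splitting $\theta^{-1}\le\varepsilon\theta^\beta+C\varepsilon^{-(1-\beta)/(1+\beta)}\theta^{\beta-2}$ with $\varepsilon=(2CR_T^\beta)^{-1}$. The $\theta^\beta\theta_x^2$ part is then absorbed into the left side, and $R_T^\beta\cdot R_T^{\beta(1-\beta)/(1+\beta)}=R_T^{2\beta/(1+\beta)}$ remains. Reinserting with $\varepsilon=R_T^{-\beta}$ gives $R_T^{\beta(1-\beta)/(1+\beta)}$ for \eqref{241}.
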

\noindent{\it Proof.}
First, for $a>0$, denote
\begin{equation}\label{243}
 (\theta>a)(t)=\{x\in\Omega:\theta(x,t)>a\},\qquad
 (\theta<a)(t)=\{x\in\Omega:\theta(x,t)<a\}.
\end{equation}
For $a>1$, the entropy estimate gives
\begin{equation}\label{hot-measure}
 \sup_{0\leq t\leq T}\left\{\|u\|_{L^2}^2+\|v-1\|_{L^2}^2
 +\int_{(\theta>a)(t)}\theta\,dx
 +\int_{\{\theta\leq a\}}(\theta-1)^2dx\right\}\leq C(a).
\end{equation}
Multiplying \eqref{13a} by $(\theta-2)_+$ and integrating over
$\Omega\times(0,t)$, we obtain
\begin{align}
 &\frac{c_v}{2}\int_\Omega(\theta-2)_+^2dx
 +\tilde\kappa\int_0^t\int_{(\theta>2)(s)}
                    \frac{\theta^\beta\theta_x^2}{v}dxds\notag\\
 &=\frac{c_v}{2}\int_\Omega(\theta_0-2)_+^2dx
 -R\int_0^t\int_\Omega\frac{\theta u_x}{v}(\theta-2)_+dxds
 +\int_0^t\int_\Omega\frac{\mu u_x^2}{v}(\theta-2)_+dxds.
 \label{246}
\end{align}
To estimate the last term, we multiply \eqref{12} by
$2u(\theta-2)_+$ and integrate to get
\begin{align}
 &\int_\Omega u^2(\theta-2)_+dx
 +2\int_0^t\int_\Omega\frac{\mu u_x^2}{v}(\theta-2)_+dxds\notag\\
 &=\int_\Omega u_0^2(\theta_0-2)_+dx
 +2R\int_0^t\int_\Omega\frac{\theta u_x}{v}(\theta-2)_+dxds\notag\\
 &\quad+2R\int_0^t\int_{(\theta>2)(s)}\frac{\theta u\theta_x}{v}dxds
 -2\int_0^t\int_{(\theta>2)(s)}\frac{\mu uu_x\theta_x}{v}dxds
 +\int_0^t\int_{(\theta>2)(s)}u^2\theta_tdxds.
 \label{247}
\end{align}
Adding \eqref{246} and \eqref{247}, and using \eqref{13a}, we have
\begin{align}
 &\int_\Omega\left\{\frac{c_v}2(\theta-2)_+^2
                                  +u^2(\theta-2)_+\right\}dx
 +\tilde\kappa\int_0^t\int_{(\theta>2)(s)}
                                  \frac{\theta^\beta\theta_x^2}v dxds
 \notag\\*
 &\quad+\int_0^t\int_\Omega\frac{\mu u_x^2}v(\theta-2)_+dxds
 \notag\\*
 &=\int_\Omega\left\{\frac{c_v}2(\theta_0-2)_+^2
                                  +u_0^2(\theta_0-2)_+\right\}dx
 +R\int_0^t\int_\Omega\frac{\theta u_x}v(\theta-2)_+dxds\notag\\
 &\quad+2R\int_0^t\int_{(\theta>2)(s)}
                      \frac{\theta u\theta_x}v dxds
       -2\int_0^t\int_{(\theta>2)(s)}
                      \frac{\mu uu_x\theta_x}v dxds\notag\\
 &\quad+\frac1{c_v}\int_0^t\int_{(\theta>2)(s)}
                    u^2\left(\frac{\mu u_x^2}v-\frac{R\theta u_x}v\right)dxds
       +\frac{\tilde\kappa}{c_v}\int_0^t\int_{(\theta>2)(s)}
                    u^2\left(\frac{\theta^\beta\theta_x}v\right)_x dxds
 \notag\\
 &=\int_\Omega\left\{\frac{c_v}2(\theta_0-2)_+^2
                                  +u_0^2(\theta_0-2)_+\right\}dx
                                      +\sum_{i=1}^5 I_i.
 \label{hot-identity}
\end{align}
By \eqref{ga}, \eqref{vvv} and \eqref{hot-measure},
\begin{align}
 |I_1|
 &\leq\frac18\int_0^t\int_\Omega
                  \frac{\mu u_x^2}v(\theta-2)_+dxds
       +C\int_0^t\int_{(\theta>2)(s)}
                         \theta^2(\theta-2)\,dxds\notag\\
 &\leq\frac18\int_0^t\int_\Omega
                  \frac{\mu u_x^2}v(\theta-2)_+dxds
       +C\int_0^t\|(\theta-3/2)_+\|_{L^\infty}^2ds,\notag\\
 |I_2|+|I_3|
 &\leq\eta\int_0^t\int_{(\theta>2)(s)}
                  \frac{\theta^\beta\theta_x^2}v dxds
       +C(\eta)\int_0^t\int_{(\theta>2)(s)}
                  \bigl(\theta^{2-\beta}u^2
                              +\theta^{-\beta}u^2u_x^2\bigr)dxds\notag\\
 &\leq\eta\int_0^t\int_{(\theta>2)(s)}
                  \frac{\theta^\beta\theta_x^2}v dxds
       +C(\eta)\int_0^t\left\{\|(\theta-3/2)_+\|_{L^\infty}^2
                                    +\int_\Omega u^2u_x^2dx\right\}ds,
 \notag\\
 |I_4|
 &\leq C\int_0^t\int_{(\theta>2)(s)}
                      (u^2u_x^2+\theta^2u^2)dxds\notag\\
 &\leq C\int_0^t\int_\Omega u^2u_x^2dxds
                         +C\int_0^t\|(\theta-3/2)_+\|_{L^\infty}^2ds.
 \label{hot-first-four}
\end{align}
For the last term use
\begin{equation}
 \varphi_\eta(\theta)=
 \begin{cases}
 0,&\theta\leq2,\\
 (\theta-2)/\eta,&2<\theta<2+\eta,\\
 1,&\theta\geq2+\eta.
 \end{cases}
\end{equation}
The bounds in \eqref{20} give
\begin{align}
 &\int_0^T(\|u_x\|_{L^\infty}^2+\|\theta_x\|_{L^\infty}^2)dt\notag\\
 &\quad\leq2\int_0^T
       (\|u_x\|_{L^2}\|u_{xx}\|_{L^2}
          +\|\theta_x\|_{L^2}\|\theta_{xx}\|_{L^2})dt
 \leq2M,\notag\\
 &\int_0^T\int_\Omega
       (u_x^4+\theta_x^4+u_x^2v_x^2+\theta_x^2v_x^2+u_x^2\theta_x^2)dxdt
 \notag\\
 &\quad\leq2\sup_{s\leq T}\int_\Omega(v_x^2+u_x^2+\theta_x^2)dx
       \int_0^T(\|u_x\|_{L^\infty}^2+\|\theta_x\|_{L^\infty}^2)dt
 \leq4M^2.\label{finite-products}
\end{align}
Thus
\begin{align*}
 \left(\frac{\theta^\beta\theta_x}v\right)_x
 &=\frac{\theta^\beta}v\theta_{xx}
   +\frac{\beta\theta^{\beta-1}}v\theta_x^2
   -\frac{\theta^\beta}{v^2}\theta_xv_x.
\end{align*}
Hence
\begin{align*}
 &\int_0^T\int_\Omega
       \left|\left(\frac{\theta^\beta\theta_x}v\right)_x\right|^2dxdt\\*
 &\quad\leq C(M)\int_0^T\int_\Omega
       (\theta_{xx}^2+\theta_x^4+\theta_x^2v_x^2)dxdt\leq C(M),\\
 &\int_0^t\int_\Omega u^2
       \left|\left(\frac{\theta^\beta\theta_x}v\right)_x\right|dxds\\
 &\quad\leq\sup_{s\leq t}\|u(s)\|_{L^\infty}
       \left(\int_0^t\int_\Omega u^2dxds\right)^{1/2}
       \left(\int_0^t\int_\Omega
       \left|\left(\frac{\theta^\beta\theta_x}v\right)_x\right|^2dxds\right)^{1/2}
 \leq C(M)\sqrt t,\\
 &\int_0^t\int_\Omega
       \left|\frac{uu_x\theta^\beta\theta_x}v\right|dxds
 \leq C(M)\int_0^t\int_\Omega(u_x^2+\theta_x^2)dxds\leq C(M).
\end{align*}
Consequently, integration by parts and $\varphi_\eta'\geq0$ give
\begin{align}
 I_5
 &=\lim_{\eta\downarrow0}\frac{\tilde\kappa}{c_v}
       \int_0^t\int_\Omega\varphi_\eta(\theta)u^2
                  \left(\frac{\theta^\beta\theta_x}v\right)_x dxds
 \notag\\
 &=\lim_{\eta\downarrow0}\left\{
   -\frac{\tilde\kappa}{c_v}\int_0^t\int_\Omega
           \varphi_\eta'(\theta)\frac{u^2\theta^\beta\theta_x^2}v dxds
                       \right.\notag\\
 &\hspace{29mm}\left.
   -\frac{2\tilde\kappa}{c_v}\int_0^t\int_\Omega
           \varphi_\eta(\theta)\frac{uu_x\theta^\beta\theta_x}v dxds
                    \right\}\notag\\
 &\leq\lim_{\eta\downarrow0}
   \left[-\frac{2\tilde\kappa}{c_v}\int_0^t\int_\Omega
           \varphi_\eta(\theta)\frac{uu_x\theta^\beta\theta_x}v dxds\right]
 \notag\\
 &=-\frac{2\tilde\kappa}{c_v}\int_0^t\int_{(\theta>2)(s)}
                       \frac{uu_x\theta^\beta\theta_x}v dxds\notag\\
 &\leq\varepsilon\int_0^t\int_{(\theta>2)(s)}
                            \frac{\theta^\beta\theta_x^2}v dxds
       +C(\varepsilon)R_T^\beta\int_0^t\int_\Omega u^2u_x^2dxds.
 \label{hot-interface}
\end{align}
Choose $\eta,\varepsilon>0$ sufficiently small in
\eqref{hot-first-four} and \eqref{hot-interface}. Since
\begin{align}
 \int_\Omega\bigl((\theta-1)^2+\theta u^2\bigr)dx
 &\leq C+C\int_\Omega\bigl((\theta-2)_+^2+u^2(\theta-2)_+\bigr)dx,
 \notag\\
 \int_0^t\int_{\{\theta\leq3\}}
                    (\theta u_x^2+\theta^\beta\theta_x^2)dxds
 &\leq C\int_0^t V(s)ds\leq C,
\end{align}
we have
\begin{align}
 &\sup_{0\leq s\leq t}\int_\Omega
        \bigl((\theta-1)^2+\theta u^2\bigr)dx
       +\int_0^t\int_\Omega(\theta u_x^2+\theta^\beta\theta_x^2)dxds
 \notag\\
 &\qquad\leq C+C\int_0^t\|(\theta-3/2)_+\|_{L^\infty}^2ds
                  +C R_T^\beta\int_0^t\int_\Omega u^2u_x^2dxds.
 \label{hot-bound}
\end{align}

Multiplying \eqref{12} by $u^3$, we obtain
\begin{align}
 \frac14\int_\Omega u^4dx
 +3\int_0^t\int_\Omega\frac{\mu u^2u_x^2}v dxds
 &=\frac14\int_\Omega u_0^4dx
   +3R\int_0^t\int_{\{\theta>2\}}
                       \left(\frac\theta v-1\right)u^2u_xdxds\notag\\
 &\quad+3R\int_0^t\int_{\{\theta\leq2\}}
                       \left(\frac\theta v-1\right)u^2u_xdxds.
 \label{quartic-identity}
\end{align}
The two integrals satisfy
\begin{align}
 &3R\left|\int_0^t\int_{\{\theta>2\}}
                       \left(\frac\theta v-1\right)u^2u_xdxds\right|\notag\\
 &\qquad\leq\int_0^t\int_\Omega\frac{\mu u^2u_x^2}v dxds
                      +C\int_0^t\|(\theta-3/2)_+\|_{L^\infty}^2ds,
 \notag\\
 &3R\left|\int_0^t\int_{\{\theta\leq2\}}
                       \left(\frac\theta v-1\right)u^2u_xdxds\right|\notag\\
 &\qquad\leq C\int_0^t V(s)ds
      +C\int_0^t\|u\|_{L^\infty}^4
            \int_{\{\theta\leq2\}}
               \bigl((\theta-1)^2+(v-1)^2\bigr)dxds\notag\\
 &\qquad\leq C+C\int_0^t\|u\|_{L^\infty}^4ds.
 \label{quartic-pressure}
\end{align}
For any $\zeta>0$, the one-dimensional inequality gives
\begin{align}
 \|u\|_{L^\infty}^4
 &\leq4\int_\Omega |u|^3|u_x|dx\notag\\
 &\leq4\int_{\{\theta\leq2\}}|u|^3|u_x|dx
    +4\int_{\{\theta>2\}}|u|^{3/2}
          (|u|\theta^{1/4})|uu_x|^{1/2}
                              \theta^{-1/4}|u_x|^{1/2}dx\notag\\
 &\leq\zeta\int_{\{\theta\leq2\}}\theta u^6dx
       +\zeta\int_{\{\theta>2\}}
                        (u^6+\theta u^4+u^2u_x^2)dx
       +C(\zeta)\int_\Omega\theta^{-1}u_x^2dx\notag\\
 &\leq C\zeta\|u\|_{L^\infty}^4
                 +C\zeta\int_\Omega\frac{\mu u^2u_x^2}v dx
                 +C(\zeta)V(t),\notag\\
 \|u\|_{L^\infty}^4
 &\leq\delta\int_\Omega\frac{\mu u^2u_x^2}v dx+C(\delta)V(t)
 \qquad(\delta>0).
 \label{quartic-sobolev}
\end{align}
Substitution into \eqref{quartic-identity}, with $\delta$ sufficiently
small, yields
\begin{align}
 &\sup_{0\leq s\leq t}\int_\Omega u^4dx
       +\int_0^t\int_\Omega u^2u_x^2dxds
 \leq C+C\int_0^t\|(\theta-3/2)_+\|_{L^\infty}^2ds,\label{quartic-bound}\\
 &\sup_{0\leq s\leq t}\int_\Omega
                    \bigl((\theta-1)^2+\theta u^2\bigr)dx
       +\int_0^t\int_\Omega(\theta u_x^2+\theta^\beta\theta_x^2)dxds
 \notag\\
 &\qquad\leq C R_T^\beta
                \left(1+\int_0^t\|(\theta-3/2)_+\|_{L^\infty}^2ds\right).
 \label{low-order-closure}
\end{align}
Finally,
\begin{align}
 \|(\theta-3/2)_+\|_{L^\infty}^2
 &\leq2\int_\Omega(\theta-3/2)_+|\theta_x|dx\notag\\
 &\leq\delta\|(\theta-3/2)_+\|_{L^\infty}^2
                         \int_{\{\theta>3/2\}}\theta\,dx
       +C(\delta)\int_{\{\theta>3/2\}}\theta^{-1}\theta_x^2dx\notag\\
 &\leq\frac12\|(\theta-3/2)_+\|_{L^\infty}^2
                         +C\int_\Omega\theta^{-1}\theta_x^2dx.
 \label{hot-peak}
\end{align}
For $\beta=1$, \eqref{e0} and \eqref{hot-peak} give
\begin{equation}
 \int_0^T\|(\theta-3/2)_+\|_{L^\infty}^2dt
 \leq C\int_0^T\int_\Omega\theta^{-1}\theta_x^2dxdt\leq C.
\end{equation}
Together with \eqref{quartic-bound} and \eqref{low-order-closure},
this proves \eqref{240}--\eqref{241} for $\beta=1$.

For $0\leq\beta<1$, weighted Young's inequality gives
\begin{align}
 \theta^{-1}
 &=(\theta^\beta)^{(1-\beta)/2}
             (\theta^{\beta-2})^{(1+\beta)/2}
 \leq\varepsilon\theta^\beta
       +C\varepsilon^{-(1-\beta)/(1+\beta)}\theta^{\beta-2},\notag\\
 \int_0^T\|(\theta-3/2)_+\|_{L^\infty}^2dt
 &\leq\varepsilon\int_0^T\int_\Omega\theta^\beta\theta_x^2dxdt
                         +C\varepsilon^{-(1-\beta)/(1+\beta)}.
 \label{hot-interpolation}
\end{align}
Taking $\varepsilon=(2C R_T^\beta)^{-1}$ in
\eqref{low-order-closure},
\begin{align}
 &\sup_{0\leq t\leq T}\int_\Omega
                    \bigl((\theta-1)^2+\theta u^2\bigr)dx
       +\int_0^T\int_\Omega(\theta u_x^2+\theta^\beta\theta_x^2)dxdt
 \notag\\
 &\qquad\leq\frac12\int_0^T\int_\Omega\theta^\beta\theta_x^2dxdt
                     +C R_T^{2\beta/(1+\beta)}.
\end{align}
Using this estimate in \eqref{hot-interpolation} with
$\varepsilon=R_T^{-\beta}$, we have
\begin{equation}
 \int_0^T\|(\theta-3/2)_+\|_{L^\infty}^2dt
 \leq C R_T^{-\beta+2\beta/(1+\beta)}
       +C R_T^{\beta(1-\beta)/(1+\beta)}
 \leq C R_T^{\beta(1-\beta)/(1+\beta)}.
\end{equation}
Together with \eqref{quartic-bound}, this proves the lemma.
\qed

\begin{lemma}\label{lm25}
In addition to \eqref{volume-smallness}, assume that
\begin{equation}\label{derivative-smallness}
 \begin{gathered}
 \alpha\left(\frac{2eC_3C_4}{\tilde\mu}+\sqrt2 C_3^2C_4\right)
                                      (1+M)^2\leq1,\\
 \alpha^2(e\tilde\mu C_4)^2M\leq1.
 \end{gathered}
\end{equation}
For $0\leq\beta\leq1$,
\begin{align}
 &\sup_{0\leq t\leq T}\|v_x(t)\|_{L^2}^2
       +\int_0^T\int_\Omega\bigl((1+\theta)v_x^2+u_x^2\bigr)dxdt
 \leq
 C R_T^{\beta/(1+\beta)}.\label{266}
\end{align}

\end{lemma}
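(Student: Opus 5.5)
We will use the identity \eqref{2671}, obtained by dividing \eqref{12} by $\mu$, and test it with $(\ln v)_x=v_x/v$. Using $(\ln v)_{xt}(\ln v)_x=\frac12((\ln v)_x^2)_t$ and $\frac{u}{\mu}$ as a time derivative, we integrate over $\Omega\times(0,t)$ and move the time derivative off $u/\mu$:
\begin{align*}
 &\frac12\int_\Omega\frac{v_x^2}{v^2}dx-\int_\Omega\frac{uv_x}{\mu v}dx
 +\int_0^t\int_\Omega\frac{R\theta v_x^2}{\mu v^3}dxds\\
 &\quad=\frac12\int_\Omega\frac{v_{0x}^2}{v_0^2}dx-\int_\Omega\frac{u_0v_{0x}}{\mu_0v_0}dx
 -\int_0^t\int_\Omega\frac{u}{\mu}\Bigl(\frac{u_x}{v}\Bigr)_xdxds
 +\int_0^t\int_\Omega\frac{R\theta_xv_x}{\mu v^2}dxds\\
 &\qquad+\int_0^t\int_\Omega\Bigl(\frac{\mu' u\theta_t}{\mu^2}-\frac{\mu_x u_x}{\mu v}\Bigr)\frac{v_x}{v}dxds .
\end{align*}
Here $P_x=R\theta_x/v-R\theta v_x/v^2$ and $(u/\mu)v_{xt}$ has been integrated by parts. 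The boundary terms vanish since $u(0,t)=0$. The term with $(u_x/v)_x$ is integrated by parts back into $-u_x^2/(\mu v)$ plus $u\mu_xu_x/(\mu^2 v)$.

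\textbf{Signs and the good term.} The $u_x^2$ term appears with the favorable sign, which gives the $\int\int u_x^2$ part of \eqref{266}. The cross term $\int uv_x/(\mu v)$ is absorbed by $\frac14\int v_x^2/v^2$ plus $C\|u\|_{L^2}^2\le C$, using \eqref{ga}, \eqref{vvv} and Lemma~\ref{lm22}. The damping term $\int\int \theta v_x^2$ is the good term, and by \eqref{theta-lower} it also controls $\int\int v_x^2$. The pressure-gradient term $\int\int \theta_xv_x/(\mu v^2)$ is bounded by
\[
\frac14\int\int\theta v_x^2+C\int\int\theta^{-1}\theta_x^2 .
\]
For $\beta\le1$ we have $\theta^{-1}\le C\theta^\beta\cdot\theta^{-1-\beta}\le C\theta^\beta$ via \eqref{theta-lower}, so this piece is bounded by $CR_T^{2\beta/(1+\beta)}$ from \eqref{240}. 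To get the sharper exponent $\beta/(1+\beta)$, I would instead use
\[
\theta^{-1}\theta_x^2\le C\,\theta^{\beta-2}\theta_x^2\,R_T^{1-\beta}\cdot(\dots),
\]
or more simply interpolate between $\int\int\theta^{\beta-2}\theta_x^2\le C$ (from \eqref{e0}) and $\int\int\theta^\beta\theta_x^2\le CR_T^{2\beta/(1+\beta)}$. The geometric mean gives $\int\int\theta^{\beta-1}\theta_x^2\le CR_T^{\beta/(1+\beta)}$, and $\theta^{-1}\le C\theta^{\beta-1}$ by \eqref{theta-lower}. The remaining $u_x^2$ contributions are controlled by the entropy bound $\int\int\mu u_x^2/(v\theta)$ together with the dissipation already obtained on the left.

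\textbf{The $\alpha$-terms (main obstacle).} The terms carrying $\mu'=\alpha\mu/\theta$ and $\mu_x=\alpha\mu\theta_x/\theta$ are the crux. These are $\alpha\int\int u\theta_tv_x/(\theta v)$, $\alpha\int\int \theta_xu_xv_x/(\theta v^2)$ and $\alpha\int\int u\theta_xu_x/(\mu\theta v)$. I would bound each one by $\sup_t\|v_x\|_{L^2}$ times time integrals controlled by $M$ through \eqref{20}:
\[
\int_0^T\|u\|_{L^\infty}\|\theta_t\|_{L^2}dt\le C(1+M)^{2},
\qquad
\int_0^T\|\theta_x\|_{L^\infty}\|u_x\|_{L^2}dt\le CM,
\]
with the constants $C_3,C_4$ coming from \eqref{vvv} and \eqref{theta-lower}. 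This gives bounds of the form
\[
\alpha C_3C_4(1+M)^2\sup_t\|v_x\|_{L^2}^2+\alpha C_3C_4(1+M)^2 .
\]
The first condition in \eqref{derivative-smallness} is exactly what makes this at most $\frac14\sup_t\|v_x\|^2+C$. The term without $v_x$ is handled by $\alpha^2M$-type bounds, which is why the second condition is there. The hard part is tracking the constants so that the absorbed coefficient is independent of $M$; the key point is that no $\theta_t v_x$ term with an $H^2$ weight appears.

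Collecting all estimates and absorbing, we get \eqref{266} with the bound $C+CR_T^{\beta/(1+\beta)}$.
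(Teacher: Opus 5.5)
\textbf{Gap: the sign of the $u_x^2$ term.} Your identity and multiplier are the paper's, but you have the sign of the $u_x^2$ term wrong, and this hides a missing ingredient. Integrating by parts,
\[
-\int_\Omega\frac u\mu\Bigl(\frac{u_x}v\Bigr)_xdx=+\int_\Omega\frac{u_x^2}{\mu v}dx-\alpha\int_\Omega\frac{uu_x\theta_x}{\mu\theta v}dx .
\]
So $\int_0^t\int_\Omega u_x^2/(\mu v)$ sits on the right-hand side as a source, not as dissipation; compare \eqref{268}. Already for $\alpha=0$ this is the familiar Kazhikhov identity, whose right-hand side is $R\int_\Omega\theta_xv_x/v^2\,dx+\int_\Omega u_x^2/v\,dx$.

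Consequently the identity gives neither the $\int_0^T\int_\Omega u_x^2$ part of \eqref{266} nor control of this source term. Your remark that the remaining $u_x^2$ contributions are handled ``by the dissipation already obtained on the left'' is therefore circular. The entropy bound alone only gives $\int_0^T\int_\Omega u_x^2\le R_T\int_0^T\int_\Omega\theta^{-1}u_x^2\le CR_T$, which has the wrong exponent.

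The missing step is the first step of the paper's proof, \eqref{ux-low}. It is the same geometric-mean trick you already used for $\theta_x$, now applied to $u_x$:
\[
\int_0^T\int_\Omega u_x^2dxdt\le\Bigl(\int_0^T\int_\Omega\theta u_x^2dxdt\Bigr)^{1/2}\Bigl(\int_0^T\int_\Omega\theta^{-1}u_x^2dxdt\Bigr)^{1/2}\le CR_T^{\beta/(1+\beta)},
\]
by \eqref{240} and \eqref{e0}. This single bound supplies the $u_x^2$ part of \eqref{266} and also controls the source term $J_2$.

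\textbf{A smaller problem in the $\alpha$-terms.} Your pairing $\sup_t\|v_x\|_{L^2}\int_0^T\|u\|_{L^\infty}\|\theta_t\|_{L^2}dt$ is not controlled by \eqref{20} uniformly in $T$. The bound \eqref{20} gives $\|u\|_{L^\infty}$ only in $L^\infty_t$ and $\|\theta_t\|_{L^2}$ only in $L^2_t$, so you only get $M\sqrt T$.

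Instead, put $\sup_t\|u\|_{L^\infty}\le\sqrt M$ outside and pair $\|v_x\|_{L^2}\|\theta_t\|_{L^2}$, both of which are square-integrable in time. Similarly use $\sup_t\|v_x\|_{L^2}\le\sqrt M$ for the $u_xv_x\theta_x$ term. Then the whole $\alpha$-contribution is at most $\alpha\bigl(2eC_3C_4/\tilde\mu+\sqrt2C_3^2C_4\bigr)M^{3/2}\le1$ by the first condition in \eqref{derivative-smallness}.

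No absorption into $\sup_t\|v_x\|_{L^2}^2$ is needed. In fact that condition is calibrated to bound the term by $1$ outright, not to produce the coefficient $\frac14$ you claim. The second condition plays no role in this lemma; it is used for $\|\mu_x\|_{L^2}$ in Lemma~\ref{lm26}.

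With these two corrections your argument coincides with the paper's. Your bound $\int_0^T\int_\Omega\theta^{-1}\theta_x^2\le CR_T^{\beta/(1+\beta)}$ is weaker than the paper's $CR_T^{\beta(1-\beta)/(1+\beta)}$, but it suffices here.
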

\noindent{\it Proof.}
First, \eqref{e0}, \eqref{ga}, \eqref{vvv} and Lemma~\ref{lm24} give
\begin{align}
 \int_0^T\int_\Omega u_x^2dxdt
 &\leq\left(\int_0^T\int_\Omega\theta u_x^2dxdt\right)^{1/2}
       \left(\int_0^T\int_\Omega\theta^{-1}u_x^2dxdt\right)^{1/2}
 \notag\\
 &\leq
 C R_T^{\beta/(1+\beta)}.\label{ux-low}\\
 \int_0^T\int_\Omega\theta^{-1}\theta_x^2dxdt
 &\leq
 C R_T^{\beta(1-\beta)/(1+\beta)}.\label{inverse-gradient}
\end{align}
Here \eqref{hot-interpolation} is used with $\varepsilon=R_T^{-\beta}$ when $\beta<1$; the case $\beta=1$ follows from \eqref{e0}.

By \eqref{12}, \eqref{20} and \eqref{finite-products},
\begin{align*}
 \int_0^T\int_\Omega u_t^2dxdt
 &\leq C(M)\int_0^T\int_\Omega
 (u_{xx}^2+\theta_x^2+v_x^2+u_x^2\theta_x^2+u_x^2v_x^2)dxdt
 \leq C(M).
\end{align*}
Consequently,
\begin{align}
 \left(\frac{v_x}v\right)_t
 &=\frac{u_{xx}}v-\frac{u_xv_x}{v^2},\qquad
 \left(\frac u\mu\right)_t
 =\frac{u_t}\mu-\frac{\alpha u\theta_t}{\mu\theta},\notag\\
 &\int_0^T\int_\Omega
 \left(\left|\left(\frac{v_x}v\right)_t\right|^2
       +\left|\left(\frac u\mu\right)_t\right|^2\right)dxdt\notag\\
 &\quad\leq C(M)\int_0^T\int_\Omega
       (u_{xx}^2+u_x^2v_x^2+u_t^2+u^2\theta_t^2)dxdt
 \leq C(M).\label{cross-chain}
\end{align}
Using $v_t=u_x$ and $\mu_t=\alpha\mu\theta_t/\theta$, we have
\begin{align*}
 \int_\Omega\frac{u_t}\mu\frac{v_x}v dx
 &=\frac d{dt}\int_\Omega\frac{uv_x}{\mu v}dx
   +\int_\Omega\frac{u\mu_t v_x}{\mu^2v}dx
   -\int_\Omega\frac u\mu\left(\frac{u_x}v\right)_xdx\\
 &=\frac d{dt}\int_\Omega\frac{uv_x}{\mu v}dx
   +\alpha\int_\Omega\frac{uv_x\theta_t}{\mu\theta v}dx
   +\int_\Omega\left(\frac u\mu\right)_x\frac{u_x}v dx\\
 &=\frac d{dt}\int_\Omega\frac{uv_x}{\mu v}dx
   +\int_\Omega\frac{u_x^2}{\mu v}dx
   +\alpha\int_\Omega
       \frac{uv_x\theta_t-uu_x\theta_x}{\mu\theta v}dx.
\end{align*}
Multiplying \eqref{2671} by $(\ln v)_x$ and integrating over
$\Omega$, we obtain
\begin{align}
 &\frac d{dt}\int_\Omega
            \left(\frac{v_x^2}{2v^2}-\frac{uv_x}{\mu v}\right)dx
       +R\int_\Omega\frac{\theta v_x^2}{\mu v^3}dx\notag\\
 &\quad=R\int_\Omega\frac{\theta_xv_x}{\mu v^2}dx
       +\int_\Omega\frac{u_x^2}{\mu v}dx
       +\alpha\int_\Omega\frac1{\theta v}
          \left(\frac{uv_x\theta_t}\mu
                -\frac{uu_x\theta_x}\mu
                -\frac{u_xv_x\theta_x}v\right)dx.
 \label{268}
\end{align}
Its time-integrated form is
\begin{align}
 &\int_\Omega\left(\frac{v_x^2}{2v^2}
                         -\frac{uv_x}{\mu v}\right)(x,t)dx
       +R\int_0^t\int_\Omega\frac{\theta v_x^2}{\mu v^3}dxds
 \notag\\
 &=\int_\Omega\left(\frac{v_{0x}^2}{2v_0^2}
                         -\frac{u_0v_{0x}}{\mu_0v_0}\right)dx
       +R\int_0^t\int_\Omega\frac{\theta_xv_x}{\mu v^2}dxds
       +\int_0^t\int_\Omega\frac{u_x^2}{\mu v}dxds\notag\\
 &\quad+\alpha\int_0^t\int_\Omega\frac1{\theta v}
          \left(\frac{uv_x\theta_t}\mu
                -\frac{uu_x\theta_x}\mu
                -\frac{u_xv_x\theta_x}v\right)dxds\notag\\
 &=\int_\Omega\left(\frac{v_{0x}^2}{2v_0^2}
                         -\frac{u_0v_{0x}}{\mu_0v_0}\right)dx
                                                +\sum_{i=1}^3J_i.
 \label{269}
\end{align}
The first two terms satisfy
\begin{align}
 |J_1|
 &\leq\frac R2\int_0^t\int_\Omega
                     \frac{\theta v_x^2}{\mu v^3}dxds
           +C\int_0^t\int_\Omega\theta^{-1}\theta_x^2dxds,\notag\\
 J_2&\leq C\int_0^t\int_\Omega u_x^2dxds.
 \label{log-first-two}
\end{align}
For the remaining terms, \eqref{20} implies
\begin{align}
 \sup_t\|u\|_{L^\infty}^2
 &\leq2\sup_t\|u\|_{L^2}\|u_x\|_{L^2}\leq M,\notag\\
 \int_0^T\|u_x\|_{L^\infty}^2dt
 &\leq2\int_0^T\|u_x\|_{L^2}\|u_{xx}\|_{L^2}dt\leq2M,\notag\\
 |J_3|
 &\leq\frac{\alpha eC_3C_4}{\tilde\mu}
      \sup_s\|u\|_{L^\infty}
        \int_0^t\bigl(\|v_x\|_{L^2}\|\theta_t\|_{L^2}
                         +\|u_x\|_{L^2}\|\theta_x\|_{L^2}\bigr)ds\notag\\
 &\quad+\alpha C_3^2C_4\sup_s\|v_x\|_{L^2}
                            \int_0^t\|u_x\|_{L^\infty}\|\theta_x\|_{L^2}ds
 \notag\\
 &\leq\alpha\left(\frac{2eC_3C_4}{\tilde\mu}
                         +\sqrt2C_3^2C_4\right)M^{3/2}
 \notag\\
 &\leq\alpha\left(\frac{2eC_3C_4}{\tilde\mu}+\sqrt2 C_3^2C_4\right)
                                      (1+M)^2\leq1.\label{log-alpha}
\end{align}
Moreover,
\begin{equation}
 \left|\int_\Omega\frac{uv_x}{\mu v}dx\right|
 \leq\frac14\int_\Omega\frac{v_x^2}{v^2}dx+C\|u\|_{L^2}^2.
 \label{log-endpoint}
\end{equation}
Substituting \eqref{log-first-two}--\eqref{log-endpoint} into
\eqref{269}, we obtain
\begin{align*}
 &\frac14\int_\Omega\frac{v_x^2}{v^2}(x,t)dx
       +\frac R2\int_0^t\int_\Omega\frac{\theta v_x^2}{\mu v^3}dxds\\
 &\quad\leq C+C\int_0^t\int_\Omega
                          (u_x^2+\theta^{-1}\theta_x^2)dxds+|J_3|\\
 &\quad\leq
 C\bigl(1+R_T^{\beta/(1+\beta)}
                +R_T^{\beta(1-\beta)/(1+\beta)}\bigr)\\
 &\sup_{t\leq T}\|v_x(t)\|_{L^2}^2+
   \int_0^T\int_\Omega\bigl((1+\theta)v_x^2+u_x^2\bigr)dxdt\\
 &\quad\leq
 CR_T^{\beta/(1+\beta)}
\end{align*}
Here we used \eqref{ux-low}, \eqref{theta-lower} and
$\beta(1-\beta)\leq\beta$ for $0\leq\beta\leq1$.
\qed

We now estimate the first-order derivative of velocity.
\begin{lemma}\label{lm26}
Under \eqref{volume-smallness} and \eqref{derivative-smallness},
for $0\leq\beta\leq1$,
\begin{equation}\label{273}
 \sup_{0\leq t\leq T}\|u_x(t)\|_{L^2}^2
 +\int_0^T\|(u_{xx},u_t,\theta_x)\|_{L^2}^2dt
 \leq C R_T^{3\beta/(1+\beta)}.
\end{equation}
\end{lemma}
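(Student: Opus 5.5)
We multiply the momentum equation \eqref{12} by $u_{xx}$ and integrate over $\Omega\times(0,t)$. The boundary terms vanish in all three problems, since $u(0,t)=0$ gives $u_t(0,t)=0$. This yields
\begin{align*}
 \frac12\int_\Omega u_x^2dx+\int_0^t\int_\Omega\frac{\mu}{v}u_{xx}^2dxds
 &=\frac12\int_\Omega u_{0x}^2dx+\int_0^t\int_\Omega P_xu_{xx}dxds\\
 &\quad-\int_0^t\int_\Omega\Bigl(\frac{\mu_xu_x}{v}-\frac{\mu u_xv_x}{v^2}\Bigr)u_{xx}dxds.
\end{align*}
Here $\mu_x=\alpha\mu\theta_x/\theta$. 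By \eqref{ga}, \eqref{vvv} and \eqref{theta-lower}, the coefficient $\mu/v$ is bounded below by a constant $C^{-1}$. The pressure term satisfies $|P_x|\le C(|\theta_x|+\theta|v_x|)$. Young's inequality then bounds it by $\frac14\int\frac\mu v u_{xx}^2+C\int(\theta_x^2+\theta^2v_x^2)$.

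We next handle $\theta^2v_x^2$ and the term $u_xv_xu_{xx}$. For $\theta^2v_x^2$, we write $\int_0^t\int\theta^2v_x^2\le R_T\int_0^t\int\theta v_x^2\le CR_T^{1+\beta/(1+\beta)}$ by \eqref{266}. This may be too crude, so instead we use $\int\theta^2v_x^2\le\|(\theta-3/2)_+\|_{L^\infty}^2\|v_x\|_{L^2}^2+C\int v_x^2$. Lemma~\ref{lm24} and Lemma~\ref{lm25} then give the bound $CR_T^{\beta(1-\beta)/(1+\beta)+\beta/(1+\beta)}+CR_T^{\beta/(1+\beta)}$, which is at most $CR_T^{2\beta/(1+\beta)}$. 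For the term $u_xv_xu_{xx}$, we use $\|u_x\|_{L^\infty}^2\le C\|u_x\|_{L^2}\|u_{xx}\|_{L^2}$, which gives
\[
\int u_x^2v_x^2\le\|v_x\|_{L^2}^2\|u_x\|_{L^\infty}^2\le\eta\|u_{xx}\|_{L^2}^2+C(\eta)\|v_x\|_{L^2}^4\|u_x\|_{L^2}^2 .
\]
Since $\sup\|v_x\|^2\le CR_T^{\beta/(1+\beta)}$ and $\int_0^T\|u_x\|^2\le CR_T^{\beta/(1+\beta)}$, the time integral is $CR_T^{3\beta/(1+\beta)}$. This term fixes the exponent in \eqref{273}.

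The term with $\mu_x$ carries the factor $\alpha$ and involves $u_x\theta_xu_{xx}$. We bound it using \eqref{finite-products}: it is at most $\alpha C(M)$, which is at most $1$ under \eqref{derivative-smallness}. If necessary we impose one more smallness condition of the same type, to be collected in \eqref{all-smallness}. This is the main obstacle: the product $\alpha\cdot C(M)$ must be kept independent of $M$. We aim to mimic \eqref{log-alpha}. We bound $\int u_x^2\theta_x^2\le\sup\|u_x\|_{L^2}^2\int\|\theta_x\|_{L^\infty}^2$, with $\int_0^T\|\theta_x\|_{L^\infty}^2\le2M$. The result is $\alpha^2M^2$-type smallness, which is acceptable.

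Next we control $\int_0^T\|\theta_x\|^2$. Since $\theta\ge C_4^{-1}$, we have $\int\theta_x^2\le C\int\theta^{\beta}\theta_x^2\cdot\theta^{-\beta}$. For $\beta\le1$, this is bounded by $C\int_0^T\int\theta^\beta\theta_x^2\le CR_T^{2\beta/(1+\beta)}$ via \eqref{240}. Alternatively we use $\theta_x^2\le C_4\theta^{-1}\theta_x^2\cdot\theta$ interpolated, and the simplest route is the first.

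Finally, we obtain $u_t$ from the equation. We write $u_t=(\mu u_x/v)_x-P_x$, so that $|u_t|\le C(|u_{xx}|+|\theta_x|+\theta|v_x|+|u_xv_x|+\alpha|u_x\theta_x|)$. The quantities already bounded then give $\int_0^T\|u_t\|^2\le CR_T^{3\beta/(1+\beta)}$. Collecting all terms, and absorbing the $\eta$-terms into the left side, yields \eqref{273}.
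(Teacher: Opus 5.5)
Your proposal is correct and follows the paper's proof: the same $u_{xx}$-energy identity, the same splitting of $\theta^2v_x^2$ through $(\theta-3/2)_+$ with Lemmas~\ref{lm24}--\ref{lm25}, and the same interpolation $\|u_x\|_{L^\infty}^2\le 2\|u_x\|_{L^2}\|u_{xx}\|_{L^2}$ for the $u_x^2v_x^2$ term, which is what produces the exponent $3\beta/(1+\beta)$. The only deviation is in the $\mu_x$ term: the paper uses $\sup_t\|\mu_x\|_{L^2}^2\le\alpha^2(e\tilde\mu C_4)^2M\le1$ and absorbs $\|\mu_x\|_{L^2}^2\|u_x\|_{L^\infty}^2$ in the same way as the $v_x$ term, whereas you bound $C\alpha^2\int_0^T\int_\Omega u_x^2\theta_x^2\,dxdt\le C\alpha^2M^2$ using \eqref{finite-products}; this also works and needs no additional smallness condition, because $\alpha\le M^{-1}$ is already part of \eqref{volume-smallness}.
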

\noindent{\it Proof.}
First, we rewrite \eqref{12} as
\begin{equation}\label{274}
 u_t-\frac\mu v u_{xx}
 =-\frac{R\theta_x}v+\frac{R\theta v_x}{v^2}
                  +\frac{\mu_xu_x}v-\frac{\mu u_xv_x}{v^2}.
\end{equation}
Multiplying by $-u_{xx}$ and integrating by parts,
\begin{align}
 &\frac12\|u_x(t)\|_{L^2}^2
       +\int_0^t\int_\Omega\frac\mu v u_{xx}^2dxds\notag\\
 &\qquad=\frac12\|u_{0x}\|_{L^2}^2
       +R\int_0^t\int_\Omega\frac{\theta_xu_{xx}}v dxds
       -R\int_0^t\int_\Omega\frac{\theta v_xu_{xx}}{v^2}dxds\notag\\
 &\qquad\quad-\int_0^t\int_\Omega\frac{\mu_xu_xu_{xx}}v dxds
       +\int_0^t\int_\Omega\frac{\mu u_xv_xu_{xx}}{v^2}dxds\notag\\
 &\qquad\leq C+\frac14\int_0^t\int_\Omega\frac\mu v u_{xx}^2dxds\notag\\
 &\qquad\quad
       +C\int_0^t\int_\Omega\theta_x^2dxds
       +C\int_0^t\int_\Omega\theta^2v_x^2dxds\notag\\
 &\qquad\quad+C\int_0^t\int_\Omega v_x^2u_x^2dxds
       +C\int_0^t\int_\Omega\mu_x^2u_x^2dxds\notag\\
 &\qquad=C+\frac14\int_0^t\int_\Omega\frac\mu v u_{xx}^2dxds
       +\sum_{i=1}^4K_i.
 \label{275}
\end{align}
The first two integrals on the right satisfy
\begin{align}
 K_1=C\int_0^t\|\theta_x\|_{L^2}^2ds
 &\leq C\int_0^T\int_\Omega\theta^\beta\theta_x^2dxdt
 \leq
 C R_T^{2\beta/(1+\beta)}\notag\\
 K_2=C\int_0^t\int_\Omega\theta^2v_x^2dxds
 &\leq C\int_0^T\int_\Omega\theta v_x^2dxdt
       +C\sup_t\|v_x\|_{L^2}^2
                     \int_0^T\|(\theta-3/2)_+\|_{L^\infty}^2dt\notag\\
 &\leq
 C R_T^{(2\beta-\beta^2)/(1+\beta)}.\label{pressure-gradient}
\end{align}
Since $\mu_x=\alpha\mu\theta_x/\theta$,
\begin{align}
 \sup_t\|\mu_x\|_{L^2}^2
 &\leq\alpha^2(e\tilde\mu C_4)^2
                  \sup_t\|\theta_x\|_{L^2}^2
   \leq\alpha^2(e\tilde\mu C_4)^2M\leq1,
 \notag\\
 K_3+K_4
 &\leq2C\left(1+\sup_t\|v_x\|_{L^2}^2\right)
                    \int_0^T\|u_x\|_{L^2}\|u_{xx}\|_{L^2}dt\notag\\
 &\leq\frac14\int_0^T\int_\Omega\frac\mu v u_{xx}^2dxdt
       +C\left(1+\sup_t\|v_x\|_{L^2}^2\right)^2
                             \int_0^T\|u_x\|_{L^2}^2dt\notag\\
 &\leq\frac14\int_0^T\int_\Omega\frac\mu v u_{xx}^2dxdt
       +C R_T^{3\beta/(1+\beta)}.\label{momentum-products}
\end{align}
Substitution of \eqref{pressure-gradient} and \eqref{momentum-products}
into \eqref{275} gives
\begin{align*}
 &\frac12\sup_{t\leq T}\|u_x(t)\|_{L^2}^2
       +\frac12\int_0^T\int_\Omega\frac\mu v u_{xx}^2dxdt\\
 &\quad\leq C+C\int_0^T\int_\Omega
                         (\theta_x^2+\theta^2v_x^2)dxdt
       +C\left(1+\sup_t\|v_x\|_{L^2}^2\right)^2
                              \int_0^T\|u_x\|_{L^2}^2dt\\
 &\quad\leq
 C\bigl(R_T^{2\beta/(1+\beta)}
     +R_T^{(2\beta-\beta^2)/(1+\beta)}
     +R_T^{3\beta/(1+\beta)}\bigr)\\
 &\quad\leq
 CR_T^{3\beta/(1+\beta)}
\end{align*}
Finally, squaring \eqref{274},
\begin{align}
 \int_0^T\|u_t\|_{L^2}^2dt
 &\leq C\int_0^T\int_\Omega
      \bigl(u_{xx}^2+\theta_x^2+\theta^2v_x^2
                         +v_x^2u_x^2+\mu_x^2u_x^2\bigr)dxdt\notag\\
 &\leq
 C R_T^{3\beta/(1+\beta)}
\end{align}
This proves \eqref{273}.
\qed

The next lemma gives the upper and lower bounds of temperature.
\begin{lemma}\label{lm27}
Under \eqref{volume-smallness} and \eqref{derivative-smallness}, for
$0\leq\beta\leq1$,
\begin{equation}\label{281}
 \sup_{0\leq t\leq T}
   \left(\|\theta(t)\|_{L^\infty}+\|\theta(t)^{-1}\|_{L^\infty}
                         +\|\theta^\beta\theta_x(t)\|_{L^2}^2\right)
       +\int_0^T\|\theta^{\beta/2}\theta_t\|_{L^2}^2dt\leq C.
\end{equation}

\end{lemma}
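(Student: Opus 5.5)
The lower bound for $\theta$ is already \eqref{theta-lower}. The whole task is therefore an estimate of $\|\theta^\beta\theta_x\|_{L^2}$ in terms of a power of $R_T$ strictly less than one. Such an estimate closes the upper bound through a unit-interval argument for $\theta^{\beta+3/2}$.

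\textbf{Energy estimate with multiplier $\theta^\beta\theta_t$.} Multiply \eqref{13a} by $\theta^\beta\theta_t$ and integrate over $\Omega\times(0,t)$. Set $K(\theta)=\tilde\kappa\theta^\beta$. The diffusion term gives
\[
\int \frac{K\theta_x}{v}\,(K\theta_x)_t\,dx\Big/\tilde\kappa
=\frac{d}{dt}\int\frac{(\theta^\beta\theta_x)^2}{2v}\,dx
+\int\frac{(\theta^\beta\theta_x)^2u_x}{2v^2}\,dx .
\]
The boundary terms vanish in all three problems, because either $\theta_x=0$ or $\theta_t=0$ at $x=0$. This yields
\[
c_v\int_0^t\!\!\int\theta^\beta\theta_t^2
+\sup\int\frac{(\theta^\beta\theta_x)^2}{v}
\le C+C\int_0^t\!\!\int\theta^\beta\bigl(\theta^2u_x^2+u_x^4\bigr)
+C\int_0^t\!\!\int(\theta^\beta\theta_x)^2|u_x| .
\]
The factor $\mu\le e\tilde\mu$ from \eqref{ga} keeps the viscous term harmless.

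\textbf{Bounding the right-hand side.}
\begin{itemize}
\item The term $\int\!\!\int\theta^{\beta+2}u_x^2$ is at most $R_T^{\beta+1}\int\!\!\int\theta u_x^2\le CR_T^{\beta+1+2\beta/(1+\beta)}$ by \eqref{240}.
\item For $\int\!\!\int\theta^\beta u_x^4$ and the last term, use $\|u_x\|_{L^\infty}^2\le C\|u_x\|_{L^2}\|u_{xx}\|_{L^2}$ together with Lemmas~\ref{lm25}--\ref{lm26}.
\item Split the last term as $\|u_x\|_{L^\infty}\|\theta^\beta\theta_x\|_{L^2}^2$ and apply Gronwall, using $\int_0^T\|u_x\|_{L^\infty}^2dt\le CR_T^{3\beta/(1+\beta)}$.
\end{itemize}
If Gronwall produces exponential growth in $R_T$, I would instead estimate $\|\theta^\beta\theta_x\|_{L^\infty}$ through $(\theta^\beta\theta_x/v)_x$, which is read off from \eqref{13a}. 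The result is a bound of the form
\[
\sup_t\|\theta^\beta\theta_x\|_{L^2}^2\le CR_T^{\gamma}
\]
with an explicit exponent $\gamma$. This bookkeeping is the \textbf{main obstacle}: one must check that the combined power of $R_T$ stays below the threshold required in the next step. If the crude bound for $\int\!\!\int\theta^{\beta+2}u_x^2$ is too lossy, it should first be sharpened by Gagliardo–Nirenberg interpolation against $\theta_t$.

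\textbf{Upper bound via $\theta^{\beta+3/2}$.} On each unit interval $[k,k+1]$ there is a point $b_k$ where $\theta\le\alpha_2$. Then
\[
\theta^{\beta+3/2}(x)\le C+C\int_k^{k+1}\theta^{1/2}\,|\theta^\beta\theta_x|\,dx
\le C+C\Bigl(\int_k^{k+1}\theta\Bigr)^{1/2}\|\theta^\beta\theta_x\|_{L^2}.
\]
By \eqref{hot-measure}, $\int_k^{k+1}\theta\le C$. Hence $R_T^{\beta+3/2}\le C+CR_T^{\gamma/2}$. Provided $\gamma/2<\beta+3/2$, which is the exponent check flagged above, this gives $R_T\le C$. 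Feeding $R_T\le C$ back into the energy inequality gives the bounds on $\|\theta^\beta\theta_x\|_{L^2}^2$ and $\int\!\!\int\theta^\beta\theta_t^2$. Together with \eqref{theta-lower}, this proves \eqref{281}.
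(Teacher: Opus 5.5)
Your outline follows the same architecture as the paper's proof: the multiplier $\theta^\beta\theta_t$, the bound $CR_T^{\beta+1+2\beta/(1+\beta)}$ for $\int_0^T\!\int\theta^{\beta+2}u_x^2$, and the unit-interval estimate for $\theta^{\beta+3/2}$ with threshold $\gamma<2\beta+3$. However, the two steps that make the lemma work are left open, and the version of the first one you propose does not close.

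\textbf{The Gronwall step.} Applying Gronwall with coefficient $C\|u_x\|_{L^\infty}$ produces the factor $\exp\bigl(C\int_0^T\|u_x\|_{L^\infty}dt\bigr)$, but you only control $\int_0^T\|u_x\|_{L^\infty}^2dt$. By Cauchy--Schwarz this gives at best $\exp\bigl(C\sqrt T R_T^{3\beta/(2(1+\beta))}\bigr)$. That bound depends on $T$, which is fatal: all constants must be independent of $T$ for the continuation argument and for the large-time limit. For $\beta>0$ it is moreover exponential in a power of $R_T$, so it cannot be absorbed by $R_T^{2\beta+3}$ in the final step. Your fallback through $(\theta^\beta\theta_x/v)_x$ does not avoid this. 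After absorbing $\|\theta_t\|_{L^2}$ into the left side, you are left with Gronwall coefficients such as $\|u_x\|_{L^2}^{4/3}$, which are again not integrable on $(0,T)$ uniformly in $T$. Separately, your opening sentence asks for a power of $R_T$ below one; that is the wrong threshold. The correct one is the $\gamma<2\beta+3$ you state at the end.

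\textbf{The missing idea.} Split the coefficient so that its non-integrable part multiplies the time-integrable quantity $\|\theta^\beta\theta_x\|_{L^2}^2$ and goes into the forcing, with weights given by powers of $R_T$. The paper uses
$C\|u_x\|_{L^\infty}\le\varepsilon_1\|u_x\|_{L^2}^2+\varepsilon_2\|u_{xx}\|_{L^2}^2+C(\varepsilon_1\varepsilon_2)^{-1/2}$
with $\varepsilon_1=R_T^{-\beta/(1+\beta)}$ and $\varepsilon_2=R_T^{-3\beta/(1+\beta)}$. By Lemmas~\ref{lm25}--\ref{lm26}, the Gronwall coefficient then has time integral at most $C$, so the Gronwall factor is only $e^C$. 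The constant part contributes the forcing $(\varepsilon_1\varepsilon_2)^{-1/2}R_T^\beta\int_0^T\!\int\theta^\beta\theta_x^2\le CR_T^{\beta+4\beta/(1+\beta)}$, by \eqref{240}. The simpler split $C\|u_x\|_{L^\infty}\le\lambda\|u_x\|_{L^\infty}^2+C\lambda^{-1}$ with $\lambda=R_T^{-3\beta/(1+\beta)}$ also works with your own bound; it gives forcing $CR_T^{\beta+5\beta/(1+\beta)}$.

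\textbf{The exponent check.} From \eqref{283} one has $\int_0^T\!\int\theta^\beta u_x^4\le CR_T^{2\beta+4\beta/(1+\beta)}$, so
$\gamma=\max\{2\beta+4\beta/(1+\beta),\,\beta+1+2\beta/(1+\beta)\}$. The check you deferred is then
$2\beta+3-\bigl(2\beta+\tfrac{4\beta}{1+\beta}\bigr)=\tfrac{3-\beta}{1+\beta}>0$ and
$2\beta+3-\bigl(\beta+1+\tfrac{2\beta}{1+\beta}\bigr)=\tfrac{\beta^2+\beta+2}{1+\beta}>0$ for $0\le\beta\le1$. So no sharpening of the pressure term is needed. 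Without these two steps, the proposal is a plan rather than a proof of \eqref{281}.
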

\noindent{\it Proof.}
Multiplying \eqref{13a} by $\theta^\beta\theta_t$ and integrating by parts, we obtain
\begin{align}
 &\frac{\tilde\kappa}2\frac d{dt}
             \int_\Omega\frac{\theta^{2\beta}\theta_x^2}v dx
                          +c_v\int_\Omega\theta^\beta\theta_t^2dx
 \notag\\
 &\quad=\tilde\mu\int_\Omega
                   \frac{\theta^{\alpha+\beta}u_x^2\theta_t}v dx
       -R\int_\Omega\frac{\theta^{\beta+1}u_x\theta_t}v dx
       -\frac{\tilde\kappa}2\int_\Omega
                   \frac{\theta^{2\beta}\theta_x^2u_x}{v^2}dx
 \notag\\
 &\quad\leq\frac{c_v}2\int_\Omega\theta^\beta\theta_t^2dx
       +C\int_\Omega\theta^\beta u_x^4dx
       +C\int_\Omega\theta^{\beta+2}u_x^2dx
       +C\|u_x\|_{L^\infty}
                    \int_\Omega\frac{\theta^{2\beta}\theta_x^2}v dx.
 \label{282}
\end{align}
The initial term satisfies
\begin{equation}
 \int_\Omega\frac{\theta_0^{2\beta}\theta_{0x}^2}{v_0}dx
 \leq(\inf v_0)^{-1}\|\theta_0\|_{L^\infty}^{2\beta}
                                      \|\theta_{0x}\|_{L^2}^2\leq C.
\end{equation}
Integrating \eqref{282} gives
\begin{align}
 &\int_\Omega\frac{\theta^{2\beta}\theta_x^2}v(x,t)dx
       +\frac{c_v}{\tilde\kappa}
                         \int_0^t\int_\Omega\theta^\beta\theta_t^2dxds
 \notag\\
 &\quad\leq C+C\int_0^t\int_\Omega\theta^\beta u_x^4dxds
       +C\int_0^t\int_\Omega\theta^{\beta+2}u_x^2dxds
       +C\int_0^t\|u_x\|_{L^\infty}
                    \int_\Omega\frac{\theta^{2\beta}\theta_x^2}v dxds
 \notag\\
 &\quad=C+\sum_{i=1}^3L_i.
 \label{thermal-three}
\end{align}
For every $\varepsilon>0$,
\begin{align}
 \|u_x\|_{L^6}^3
 &\leq\|u_x\|_{L^\infty}^2\|u_x\|_{L^2}
 \leq2\|u_x\|_{L^2}^2\|u_{xx}\|_{L^2},\notag\\
 \int_0^T\int_\Omega\theta^\beta u_x^4dxdt
 &\leq\int_0^T\|\theta^\beta u_x\|_{L^2}\|u_x\|_{L^6}^3dt
 \leq2\sup_t\|u_x\|_{L^2}^2
           \int_0^T\|\theta^\beta u_x\|_{L^2}\|u_{xx}\|_{L^2}dt\notag\\
 &\leq\sup_t\|u_x\|_{L^2}^2
       \left(\varepsilon\int_0^T\|\theta^\beta u_x\|_{L^2}^2dt
                   +\varepsilon^{-1}\int_0^T\|u_{xx}\|_{L^2}^2dt\right).
 \label{283}
\end{align}
Taking $\varepsilon=1$ in \eqref{283}, we have
\begin{align}
 \int_0^T\|\theta^\beta u_x\|_{L^2}^2dt
 &\leq R_T^{2\beta}\int_0^T\|u_x\|_{L^2}^2dt
                         \leq C R_T^{2\beta+\beta/(1+\beta)},\notag\\
 L_1&\leq C R_T^{3\beta/(1+\beta)}
       \left(R_T^{2\beta+\beta/(1+\beta)}
                           +R_T^{3\beta/(1+\beta)}\right)
 \leq C R_T^{2\beta+4\beta/(1+\beta)}.
 \label{quartic-small-beta}
\end{align}
For the pressure term, Lemma~\ref{lm24} yields
\begin{equation}\label{284}
 L_2\leq C R_T^{\beta+1}\int_0^T\int_\Omega\theta u_x^2dxdt
 \leq
 C R_T^{\beta+1+2\beta/(1+\beta)}
\end{equation}
For $L_3$, Young's inequality gives
\begin{align}
 C\|u_x\|_{L^\infty}
 &\leq C\|u_x\|_{L^2}^{1/2}\|u_{xx}\|_{L^2}^{1/2}\notag\\
 &\leq\varepsilon_1\|u_x\|_{L^2}^2
       +\varepsilon_2\|u_{xx}\|_{L^2}^2
       +C\varepsilon_1^{-1/2}\varepsilon_2^{-1/2},\notag\\
 L_3
 &\leq\int_0^t
      \bigl(\varepsilon_1\|u_x\|_{L^2}^2+\varepsilon_2\|u_{xx}\|_{L^2}^2\bigr)
                   \int_\Omega\frac{\theta^{2\beta}\theta_x^2}v dxds
 \notag\\
 &\quad+C\varepsilon_1^{-1/2}\varepsilon_2^{-1/2}
                       \int_0^t\int_\Omega\theta^{2\beta}\theta_x^2dxds,
 \label{285}\\
 \int_0^T\int_\Omega\theta^{2\beta}\theta_x^2dxdt
 &\leq R_T^\beta\int_0^T\int_\Omega\theta^\beta\theta_x^2dxdt
 \leq
 C R_T^{\beta+2\beta/(1+\beta)}\notag
\end{align}
Choose
\begin{equation}
 (\varepsilon_1,\varepsilon_2)=
 (R_T^{-\beta/(1+\beta)},R_T^{-3\beta/(1+\beta)})
\end{equation}
Then
\begin{align}
 &\int_0^T
      \bigl(\varepsilon_1\|u_x\|_{L^2}^2+\varepsilon_2\|u_{xx}\|_{L^2}^2\bigr)dt
 \leq C,\notag\\
 &\varepsilon_1^{-1/2}\varepsilon_2^{-1/2}
                  \int_0^T\int_\Omega\theta^{2\beta}\theta_x^2dxdt
 \leq
 C R_T^{\beta+4\beta/(1+\beta)}
\end{align}
Substituting \eqref{283}--\eqref{285} into \eqref{thermal-three}
and applying Gronwall's inequality, we obtain
\begin{align*}
 &\int_\Omega\frac{\theta^{2\beta}\theta_x^2}v(x,t)dx
       +\frac{c_v}{\tilde\kappa}
                         \int_0^t\int_\Omega\theta^\beta\theta_t^2dxds\\
 &\quad\leq C\left[\int_\Omega
                         \frac{\theta_0^{2\beta}\theta_{0x}^2}{v_0}dx
       +\int_0^T\int_\Omega\theta^\beta u_x^4dxds
       +\int_0^T\int_\Omega\theta^{\beta+2}u_x^2dxds\right.\\
 &\hspace{24mm}\left.
       +\varepsilon_1^{-1/2}\varepsilon_2^{-1/2}
                        \int_0^T\int_\Omega\theta^{2\beta}\theta_x^2dxds
             \right]\\
 &\hspace{17mm}\times\exp\left\{
        \int_0^T\bigl(\varepsilon_1\|u_x\|_{L^2}^2
                          +\varepsilon_2\|u_{xx}\|_{L^2}^2\bigr)ds\right\}\\
 &\quad\leq C\left[1+\int_0^T\int_\Omega
                          (\theta^\beta u_x^4+\theta^{\beta+2}u_x^2)dxds
       +\varepsilon_1^{-1/2}\varepsilon_2^{-1/2}
                        \int_0^T\int_\Omega\theta^{2\beta}\theta_x^2dxds
             \right].
\end{align*}
Taking the supremum and using \eqref{quartic-small-beta},
\eqref{284},
\begin{align}
 &\sup_{0\leq t\leq T}\|\theta^\beta\theta_x(t)\|_{L^2}^2
                 +\int_0^T\|\theta^{\beta/2}\theta_t\|_{L^2}^2dt\notag\\
 &\qquad\leq
 C\left(R_T^{2\beta+4\beta/(1+\beta)}
              +R_T^{\beta+1+2\beta/(1+\beta)}\right).\label{286}
\end{align}

For $x\in[k,k+1]$, \eqref{v0}--\eqref{v1} imply
\begin{align}
 \theta^{\beta+3/2}(x,t)
 &\leq\alpha_2^{\beta+3/2}
       +(\beta+3/2)\int_k^{k+1}\theta^{1/2}
                                  |\theta^\beta\theta_x|dx\notag\\
 &\leq\alpha_2^{\beta+3/2}
       +(\beta+3/2)\alpha_2^{1/2}
                     \|\theta^\beta\theta_x(t)\|_{L^2(k,k+1)},\notag\\
 (R_T-1)^{2\beta+3}
 &\leq2\alpha_2^{2\beta+3}
       +2(\beta+3/2)^2\alpha_2
                        \sup_t\|\theta^\beta\theta_x(t)\|_{L^2}^2.
 \label{peak-square}
\end{align}
The exponents on the right of \eqref{286} are strictly smaller than
$2\beta+3$. Indeed,
\begin{align}
 2\beta+3-\left(2\beta+\frac{4\beta}{1+\beta}\right)
 &=\frac{3-\beta}{1+\beta}>0,\notag\\
 2\beta+3-\left(\beta+1+\frac{2\beta}{1+\beta}\right)
 &=\frac{\beta^2+\beta+2}{1+\beta}>0
                         \qquad(0\leq\beta\leq1).\notag
\end{align}
Thus \eqref{286} and \eqref{peak-square} give
\begin{equation}
 R_T^{2\beta+3}
 \leq C+C\sup_t\|\theta^\beta\theta_x(t)\|_{L^2}^2
 \leq\frac12 R_T^{2\beta+3}+C,\qquad R_T\leq C.
\end{equation}
Substituting this in \eqref{286} and using \eqref{theta-lower}
proves \eqref{281}.

Finally, multiplying \eqref{13a} by $(\theta-1)^5$ yields
\begin{align}
 &\frac{c_v}6\frac d{dt}\int_\Omega(\theta-1)^6dx\notag\\*
 &\quad=-5\tilde\kappa\int_\Omega
              \frac{(\theta-1)^4\theta^\beta\theta_x^2}v dx
       -R\int_\Omega\frac{\theta(\theta-1)^5u_x}v dx\notag\\
 &\qquad+\int_\Omega\frac{\mu(\theta-1)^5u_x^2}v dx,\notag\\
 &R\left|\int_\Omega\frac{\theta(\theta-1)^5u_x}v dx\right|
 \leq C\|u_x\|_{L^2}
                       \left(\int_\Omega|\theta-1|^{10}dx\right)^{1/2}
 \notag\\
 &\qquad\leq C\|u_x\|_{L^2}^2+C\int_\Omega|\theta-1|^{10}dx
 \leq C\|u_x\|_{L^2}^2+C\|\theta-1\|_{L^\infty}^4
                                      \int_\Omega(\theta-1)^6dx
 \notag\\
 &\qquad\leq C\|u_x\|_{L^2}^2+C\int_\Omega(\theta-1)^6dx,\notag\\
 &5\tilde\kappa\int_\Omega
                   \frac{(\theta-1)^4\theta^\beta\theta_x^2}v dx
       +\left|\int_\Omega\frac{\mu(\theta-1)^5u_x^2}v dx\right|
 \notag\\
 &\qquad\leq C\|\theta-1\|_{L^\infty}^4\|\theta_x\|_{L^2}^2
       +C\|\theta-1\|_{L^\infty}^5\|u_x\|_{L^2}^2
 \leq C(\|\theta_x\|_{L^2}^2+\|u_x\|_{L^2}^2),\notag\\
 &\int_\Omega(\theta-1)^6dx
 \leq4\|\theta-1\|_{L^2}^4\|\theta_x\|_{L^2}^2
                                      \leq C\|\theta_x\|_{L^2}^2,\notag\\
 &\left|\frac d{dt}\int_\Omega(\theta-1)^6dx\right|\notag\\
 &\qquad\leq C\left(\|\theta_x\|_{L^2}^2+\|u_x\|_{L^2}^2
                           +\int_\Omega(\theta-1)^6dx\right)
 \notag\\
 &\qquad\leq C\bigl(\|\theta_x\|_{L^2}^2+\|u_x\|_{L^2}^2\bigr),\notag\\
 &\int_0^T\left\{\int_\Omega(\theta-1)^6dx
               +\left|\frac d{dt}\int_\Omega(\theta-1)^6dx\right|\right\}dt
 \leq C.
 \label{sixth-temperature}
\end{align}
Moreover,
\begin{align}
 \|\theta^{\beta+1}-1\|_{L^\infty}^2
 &\leq2(\beta+1)^{1/2}
       \left(\int_\Omega|\theta^{\beta+1}-1|^6dx\right)^{1/4}
                                \|\theta^\beta\theta_x\|_{L^2}^{1/2}
 \notag\\
 &\leq C\left(\int_\Omega(\theta-1)^6dx\right)^{1/4}.
 \label{sixth-peak}
\end{align}
\qed

\subsubsection*{2.1.2. Case $\beta>1$}
We first estimate the negative powers of $\theta$.
\begin{lemma}\label{lm240}
Under the assumptions of Lemma~\ref{lm23}, for every $p\geq1$,
\begin{equation}\label{2440}
 \int_0^T\int_\Omega
 (\theta^{\beta-p-1}\theta_x^2+\theta^{-p}u_x^2)dxdt\leq C(p).
\end{equation}
\end{lemma}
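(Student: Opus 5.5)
Multiply the temperature equation \eqref{13a} by $\theta^{-p}$ for $p>0$ (in the spirit of the entropy multiplier $1-\theta^{-1}$ of Lemma~\ref{lm22}), integrate over $\Omega\times(0,T)$, and absorb the pressure term using the positive dissipation. The boundedness of the resulting zeroth-order quantities rests on \eqref{theta-lower}, \eqref{vvv}, \eqref{ga} and \eqref{hot-measure}.

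For $p>1$ set $g_p(\theta)=\theta^{1-p}/(p-1)$, which is bounded by $C(p)$ since $\theta\geq C_4^{-1}$. Multiplying \eqref{13a} by $-\theta^{-p}$ gives
\begin{equation*}
 c_v\frac{d}{dt}\int_\Omega g_p(\theta)\,dx
 +p\tilde\kappa\int_\Omega\frac{\theta^{\beta-p-1}\theta_x^2}{v}dx
 +\int_\Omega\frac{\mu u_x^2}{v\theta^p}dx
 =R\int_\Omega\frac{\theta^{1-p}u_x}{v}dx .
\end{equation*}
On the line, $g_p(\theta)$ does not vanish at infinity, so in practice we use $g_p(\theta)-g_p(1)+(p-1)^{-1}\ldots$, i.e.\ the normalized convex function $\Phi_p(\theta)=\frac{\theta^{1-p}-1}{p-1}+\theta-1$. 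Adding $c_v\theta_t$, multiplied by $1$, uses the energy identity; equivalently, one multiplies \eqref{13a} by $1-\theta^{-p}$ and combines with $R(1-v^{-1})$ times \eqref{11} and $u$ times \eqref{12}, exactly as in Lemma~\ref{lm22}. The boundary terms vanish under \eqref{16}, \eqref{160} and \eqref{1600}. Then $\int_\Omega\Phi_p(\theta_0)\,dx\leq C(p)\|\theta_0-1\|_{L^2}^2$, and $\Phi_p\geq0$.

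The right-hand pressure term is handled as $R\int_\Omega\frac{(\theta^{1-p}-1)u_x}{v}dx$ after subtracting the part already present in the energy identity. Split by Young's inequality:
\begin{equation*}
 \Big|R\int_\Omega\frac{\theta^{1-p}u_x}{v}\Big|
 \leq\frac12\int_\Omega\frac{\mu u_x^2}{v\theta^{p}}dx
 +C\int_\Omega\theta^{2-p}dx\ \text{(restricted appropriately)} .
\end{equation*}
The delicate point is that $\theta^{2-p}$ is not integrable on $\Omega$ unbounded. Hence I would instead pair the term with the energy pressure term $-R\theta u_x/v$, obtaining a factor $(\theta^{1-p}-1)$ or $(\theta-1)$. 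I then bound it by $\frac12\int\mu u_x^2/(v\theta^p)+C\int\theta^{p}(\theta^{1-p}-1)^2$. Since $\theta\geq C_4^{-1}$, this last quantity is controlled by $C(p)\int_{\{\theta\le 2\}}(\theta-1)^2$ plus a contribution from $\{\theta>2\}$ that is handled by the dissipation $\int_0^TV\,dt\leq E_0$.

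The main obstacle is this integrability in time of $\int(\theta-1)^2$. I would avoid it by using the low-temperature structure: for $p\geq1$ and $\theta\geq C_4^{-1}$, $\theta^{-p}\leq C(p)\theta^{-1}$. So $\int_0^T\!\int\theta^{-p}u_x^2\leq C(p)\int_0^T\!\int\theta^{-1}u_x^2\leq C(p)E_0$ directly from \eqref{e0} and \eqref{ga}, \eqref{vvv}. Similarly, $\theta^{\beta-p-1}\leq C(p)\theta^{\beta-2}$ because $p\geq1$ and $\theta$ is bounded below. Hence $\int_0^T\!\int\theta^{\beta-p-1}\theta_x^2\leq C(p)\int_0^TV\,dt\leq C(p)E_0$. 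Thus \eqref{2440} in fact follows at once from Lemma~\ref{lm22} and \eqref{theta-lower}, and the multiplier computation above is only needed if one wants constants independent of $C_4$. I would present the short argument, making sure \eqref{theta-lower} was proved under the assumptions of Lemma~\ref{lm23} alone (it was, independent of $\beta$).
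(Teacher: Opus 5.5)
Your final argument is correct and matches the decisive step of the paper's proof. Since $\theta\geq C_4^{-1}$ by \eqref{theta-lower}, one has $\theta^{-p}\leq C_4^{p-1}\theta^{-1}$ and $\theta^{\beta-p-1}\leq C_4^{p-1}\theta^{\beta-2}$, so \eqref{2440} follows from \eqref{e0} together with \eqref{ga} and \eqref{vvv}. The paper also writes out the truncated multiplier identity with $-(\theta^{-p}-4)_+$ and estimates its pressure term, but that computation is not needed for the stated bound, just as your own multiplier sketch is not. You can therefore drop the exploratory part and present only the short argument.
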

\noindent{\it Proof.}
For $p=1$, this is a consequence of \eqref{e0}, \eqref{ga} and
\eqref{vvv}. For $p>1$, multiplying \eqref{13a} by
$-(\theta^{-p}-4)_+$ and integrating, we obtain
\begin{align}
 &c_v\int_\Omega\int_\theta^{4^{-1/p}}(s^{-p}-4)_+dsdx
 +p\tilde\kappa\int_0^t\int_{(\theta<4^{-1/p})(s)}
                    \frac{\theta^{\beta-p-1}\theta_x^2}v dxds\notag\\
 &\quad+\int_0^t\int_\Omega\frac{\mu u_x^2}v
                    (\theta^{-p}-4)_+dxds\notag\\
 &=c_v\int_\Omega\int_{\theta_0}^{4^{-1/p}}(s^{-p}-4)_+dsdx
 +R\int_0^t\int_\Omega\frac{\theta u_x}v
                    (\theta^{-p}-4)_+dxds.\label{247a}
\end{align}
By \eqref{theta-lower},
\begin{align}
 &\int_0^T\int_\Omega
      (\theta^{\beta-p-1}\theta_x^2+\theta^{-p}u_x^2)dxdt\notag\\
 &\quad\leq C_4^{p-1}\int_0^T\int_\Omega
      (\theta^{\beta-2}\theta_x^2+\theta^{-1}u_x^2)dxdt
 \leq C(p),\notag\\
 &\left|R\int_0^t\int_\Omega\frac{\theta u_x}v
                    (\theta^{-p}-4)_+dxds\right|\notag\\
 &\quad\leq C(p)\int_0^t\|\theta^{-p/2}u_x\|_{L^2}
             \|(1-4\theta^p)_+\|_{L^\infty}
             |(\theta<4^{-1/p})(s)|^{1/2}ds\notag\\
 &\quad\leq\varepsilon\int_0^t\int_\Omega\theta^{-p}u_x^2dxds
       +C(p,\varepsilon)\int_0^t\|(1-4\theta^p)_+\|_{L^\infty}^2ds,
 \notag\\
 &\int_0^t\|(1-4\theta^p)_+\|_{L^\infty}^2ds
 \leq C(p)\int_0^t\int_\Omega\theta^{\beta-2}\theta_x^2dxds
 \leq C(p).\label{250a}
\end{align}
Indeed, \eqref{e0} and \eqref{theta-lower} imply
\begin{align*}
 \|(1-4\theta^p)_+\|_{L^\infty}^2
 &\leq16p^2\left(\int_{(\theta<4^{-1/p})(t)}
                            \theta^{p-1}|\theta_x|dx\right)^2\\
 &\leq C(p)|(\theta<4^{-1/p})(t)|
                   \int_{(\theta<4^{-1/p})(t)}
                            \theta^{\beta-2}\theta_x^2dx
 \leq C(p)V(t).
\end{align*}
This proves \eqref{2440}.
\qed

\begin{lemma}\label{lm28}
Under \eqref{volume-smallness} and \eqref{derivative-smallness},
for $\beta>1$,
\begin{align}
 &\sup_{0\leq t\leq T}\int_\Omega
       \bigl((\theta-1)^2+\theta u^2\bigr)dx\leq C R_T^\beta,
 \label{2110}\\
 &\int_0^T\int_\Omega(\theta u_x^2+\theta^\beta\theta_x^2)dxdt
 \leq C R_T^{\min\{\beta,2\}},\label{21100}\\
 &\sup_{0\leq t\leq T}\int_\Omega u^4dx
 +\int_0^T\int_\Omega u^2u_x^2dxdt
 +\int_0^T\|(\theta-3/2)_+\|_{L^\infty}^2dt\leq C.\label{2111}
\end{align}
\begin{align}
 &\sup_{t\leq T}\|v_x(t)\|_{L^2}^2
 +\int_0^T\int_\Omega((1+\theta)v_x^2+u_x^2)dxdt
 \leq CR_T^{\min\{\beta/2,1\}},\label{2112}\\
 &\sup_{t\leq T}\|u_x(t)\|_{L^2}^2
 +\int_0^T\|(u_{xx},u_t,\theta_x)\|_{L^2}^2dt
 \leq CR_T^{\min\{3\beta/2,3\}}.\label{2113}
\end{align}
\end{lemma}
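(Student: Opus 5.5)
I will follow the structure of Lemmas~\ref{lm24}--\ref{lm26}, replacing the interpolation in \eqref{hot-interpolation} with Lemma~\ref{lm240}. For $\beta>1$, the negative powers of $\theta$ are controlled directly, so the $L^\infty$-peak term no longer costs a power of $R_T$.

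\textbf{Step 1: the quantities in \eqref{2111} and \eqref{2110}.} I first prove \eqref{hot-peak} in the form
\[
 \|(\theta-3/2)_+\|_{L^\infty}^2\leq C\int_\Omega\theta^{-1}\theta_x^2dx .
\]
On $\{\theta>3/2\}$ we have $\theta^{-1}\leq C\theta^{\beta-2}$ when $\beta\leq 1$; for $\beta>1$, however, $\theta^{-1}$ is not bounded by $\theta^{\beta-2}$ when $\theta$ is large. I therefore use Lemma~\ref{lm240} with $p=\beta$, which gives $\int_0^T\int_\Omega\theta^{-1}\theta_x^2\leq C(\beta)$. Hence
\[
 \int_0^T\|(\theta-3/2)_+\|_{L^\infty}^2dt\leq C .
\]
Inserting this into \eqref{quartic-bound} yields \eqref{2111}. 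The hot-region identity \eqref{hot-identity} and the estimates \eqref{hot-first-four}--\eqref{hot-interface} do not use $\beta\leq1$. Therefore \eqref{hot-bound} gives \eqref{2110} and the bound $C R_T^\beta$ for the dissipation.

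\textbf{Step 2: the improvement to $R_T^{\min\{\beta,2\}}$ in \eqref{21100}.} The only $R_T^\beta$ loss is the term $C(\varepsilon)R_T^\beta\int u^2u_x^2$ in \eqref{hot-interface}. When $\beta>2$, I estimate $2\tilde\kappa c_v^{-1}\int uu_x\theta^\beta\theta_x/v$ differently, writing
\[
 |uu_x\theta^\beta\theta_x|\leq \varepsilon\theta^\beta\theta_x^2+C\theta^\beta u^2u_x^2 ,
\]
and then
\[
 \theta^\beta u^2u_x^2\leq R_T^{2}\,\theta^{\beta-2}u^2u_x^2 .
\]
This still requires controlling $\int\theta^{\beta-2}u^2u_x^2$, which I am not sure closes. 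As a fallback I would weight the $u^3$ multiplier in \eqref{quartic-identity} by $\theta$-powers. I expect this exponent bookkeeping to be the main obstacle; at worst one may accept $R_T^\beta$ there and check that the temperature upper-bound argument of Lemma~\ref{lm27} still closes.

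\textbf{Step 3: \eqref{2112} and \eqref{2113}.} I repeat Lemma~\ref{lm25} verbatim. The integral $\int_0^T\int u_x^2$ is bounded by Cauchy--Schwarz between $\theta u_x^2$ (Step 2) and $\theta^{-1}u_x^2$ (bounded by \eqref{e0}), giving $CR_T^{\min\{\beta/2,1\}}$. The term $\int\theta^{-1}\theta_x^2\leq C$ comes from Lemma~\ref{lm240}. The $\alpha$-terms $J_3$ are bounded by $1$ via \eqref{derivative-smallness}, exactly as in \eqref{log-alpha}. This proves \eqref{2112}.

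For \eqref{2113} I repeat Lemma~\ref{lm26}, estimating the terms as follows:
\begin{itemize}
\item $K_1$ uses $\int\theta_x^2\leq C\int\theta^\beta\theta_x^2+C\int\theta^{\beta-2}\theta_x^2$, where the last integral is bounded by Lemma~\ref{lm240} on the region where $\theta$ is small. This gives $CR_T^{\min\{\beta,2\}}$.
\item $K_2$ uses \eqref{2111} together with \eqref{2112}.
\item $K_3+K_4$ are bounded by
\[
 C\Big(1+\sup_t\|v_x\|_{L^2}^2\Big)^2\int_0^T\|u_x\|_{L^2}^2dt\leq CR_T^{3\min\{\beta/2,1\}} .
\]
\end{itemize}
Finally, $u_t$ is recovered from \eqref{274}, which completes \eqref{2113}.
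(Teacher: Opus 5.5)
\textbf{Verdict: there is a genuine gap in Step 2.} Steps 1 and 3 follow the paper's proof: Lemma~\ref{lm240} with $p=\beta$ gives $\int_0^T\|(\theta-3/2)_+\|_{L^\infty}^2dt\le C$, then \eqref{quartic-bound} and \eqref{low-order-closure} apply, and Lemmas~\ref{lm25}--\ref{lm26} are rerun.

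Your justification in Step 1 is inverted, though this is harmless. For $\beta>1$ one has $\theta^{-1}\le\theta^{\beta-2}$ on $\{\theta\ge1\}$, so large $\theta$ causes no trouble. Only small $\theta$ matters, and \eqref{theta-lower} handles it.

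The gap: for $\beta>2$ you never establish the cap $R_T^{\min\{\beta,2\}}$ in \eqref{21100}, and your attempt through \eqref{hot-interface} stays open. The missing idea is elementary and bypasses the hot-region identity entirely. Since $\theta\le R_T$,
\[
 \theta u_x^2+\theta^\beta\theta_x^2
 =\theta^2\bigl(\theta^{-1}u_x^2+\theta^{\beta-2}\theta_x^2\bigr)
 \le R_T^2\bigl(\theta^{-1}u_x^2+\theta^{\beta-2}\theta_x^2\bigr).
\]
By \eqref{ga} and \eqref{vvv}, the right side is at most $CR_T^2$ times the integrand of $V(t)$. Then \eqref{e0} gives $\int_0^T\int_\Omega(\theta u_x^2+\theta^\beta\theta_x^2)dxdt\le CR_T^2$. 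Combined with your $CR_T^\beta$ bound, this is \eqref{21100}.

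Your fallback of accepting $R_T^\beta$ does not rescue the argument. Step 3 would then give only $R_T^{\beta/2}$ and $R_T^{3\beta/2}$ in place of \eqref{2112}--\eqref{2113}, so the lemma as stated would not be proved. The cap is also exactly what is needed downstream. The relevant closing lemma for $\beta>1$ is Lemma~\ref{lm29}, not Lemma~\ref{lm27}. Without the cap, $N_1$ there is of order $R_T^{(7\beta-1)/2}$ even with the optimal choice of $\varepsilon$ in \eqref{283}. That exponent is not below $2\beta+3$ once $\beta\ge7/3$, so the temperature upper bound would fail to close for large $\beta$.
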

\noindent{\it Proof.}
For $\beta>1$, the calculations \eqref{246}--\eqref{hot-peak}
remain valid. By \eqref{2440} with $p=\beta$,
\begin{equation}\label{26210}
 \int_0^T\|(\theta-3/2)_+\|_{L^\infty}^2dt
 \leq C\int_0^T\int_\Omega\theta^{-1}\theta_x^2dxdt\leq C.
\end{equation}
Hence \eqref{quartic-bound} and \eqref{low-order-closure} give
\begin{align}
 &\sup_{t\leq T}\int_\Omega u^4dx
       +\int_0^T\int_\Omega u^2u_x^2dxdt\leq C,\notag\\
 &\sup_{t\leq T}\int_\Omega((\theta-1)^2+\theta u^2)dx
       +\int_0^T\int_\Omega(\theta u_x^2+\theta^\beta\theta_x^2)dxdt
       \leq CR_T^\beta,\notag\\
 &\int_0^T\int_\Omega(\theta u_x^2+\theta^\beta\theta_x^2)dxdt
 \leq CR_T^2\int_0^T V(t)dt\leq CR_T^2.\label{2114}
\end{align}
Next, using \eqref{269}, \eqref{log-alpha} and \eqref{log-endpoint},
we obtain
\begin{align}
 &\int_0^T\int_\Omega u_x^2dxdt
 \leq\left(\int_0^T\int_\Omega\theta u_x^2dxdt\right)^{1/2}
       \left(\int_0^T\int_\Omega\theta^{-1}u_x^2dxdt\right)^{1/2}
 \leq CR_T^{\min\{\beta/2,1\}},\notag\\
 &\frac14\int_\Omega\frac{v_x^2}{v^2}dx
 +\frac R2\int_0^t\int_\Omega\frac{\theta v_x^2}{\mu v^3}dxds\notag\\
 &\quad\leq C+C\int_0^t\int_\Omega
                   (u_x^2+\theta^{-1}\theta_x^2)dxds
 \leq CR_T^{\min\{\beta/2,1\}},\notag\\
 &\sup_{t\leq T}\|v_x(t)\|_{L^2}^2
 +\int_0^T\int_\Omega((1+\theta)v_x^2+u_x^2)dxdt
 \leq CR_T^{\min\{\beta/2,1\}}.\label{2115}
\end{align}
The last inequality also uses \eqref{theta-lower}.
By \eqref{275},
\begin{align}
 &\frac12\|u_x(t)\|_{L^2}^2
 +\frac34\int_0^t\int_\Omega\frac\mu v u_{xx}^2dxds\notag\\
 &\quad\leq C+C\int_0^t\int_\Omega\theta_x^2dxds
       +C\int_0^t\int_\Omega\theta^2v_x^2dxds\notag\\
 &\qquad+C\int_0^t\int_\Omega v_x^2u_x^2dxds
       +C\int_0^t\int_\Omega\mu_x^2u_x^2dxds
 =C+\sum_{i=1}^4M_i.\label{21160}
\end{align}
By \eqref{theta-lower}, \eqref{2114}, \eqref{26210} and \eqref{2115},
\begin{align}
 M_1&\leq C\int_0^T\int_\Omega\theta^\beta\theta_x^2dxdt
 \leq CR_T^{\min\{\beta,2\}},\label{2770}\\
 M_2&\leq C\int_0^T\int_\Omega\theta v_x^2dxdt
       +C\sup_t\|v_x\|_{L^2}^2\int_0^T\|(\theta-3/2)_+\|_{L^\infty}^2dt
 \leq CR_T^{\min\{\beta/2,1\}},\notag\\
 M_3+M_4
 &\leq2C(1+\sup_t\|v_x\|_{L^2}^2)
                     \int_0^T\|u_x\|_{L^2}\|u_{xx}\|_{L^2}dt\notag\\
 &\leq\frac14\int_0^T\int_\Omega\frac\mu v u_{xx}^2dxdt
       +C(1+\sup_t\|v_x\|_{L^2}^2)^2\int_0^T\|u_x\|_{L^2}^2dt\notag\\
 &\leq\frac14\int_0^T\int_\Omega\frac\mu v u_{xx}^2dxdt
       +CR_T^{\min\{3\beta/2,3\}}.\label{2116}
\end{align}
Here $\sup_t\|\mu_x\|_{L^2}^2\leq\alpha^2(e\tilde\mu C_4)^2M\leq1$.
Substitution into \eqref{21160} and then \eqref{274} yields
\begin{align}
 &\sup_{t\leq T}\|u_x(t)\|_{L^2}^2
       +\int_0^T\|(u_{xx},u_t,\theta_x)\|_{L^2}^2dt\notag\\
 &\quad\leq C\bigl(R_T^{\min\{\beta,2\}}
             +R_T^{\min\{\beta/2,1\}}
             +R_T^{\min\{3\beta/2,3\}}\bigr)
 \leq CR_T^{\min\{3\beta/2,3\}}.\label{2117}
\end{align}
\qed

We next estimate $\theta^\beta\theta_x$ for $\beta>1$.
\begin{lemma}\label{lm29}
Under \eqref{volume-smallness} and \eqref{derivative-smallness}, for
$\beta>1$,
\begin{equation}\label{2118}
 \sup_{0\leq t\leq T}
   \left(\|\theta(t)\|_{L^\infty}+\|\theta(t)^{-1}\|_{L^\infty}
                         +\|\theta^\beta\theta_x(t)\|_{L^2}^2\right)
       +\int_0^T\|\theta^{\beta/2}\theta_t\|_{L^2}^2dt\leq C.
\end{equation}

\end{lemma}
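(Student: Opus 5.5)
Our plan is to repeat the argument of Lemma~\ref{lm27}, with the polynomial bounds of Lemma~\ref{lm28} replacing those of Lemmas~\ref{lm24}--\ref{lm26}.

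\textbf{Step 1 (the multiplier).} Multiply \eqref{13a} by $\theta^\beta\theta_t$. Exactly as in \eqref{282}--\eqref{thermal-three}, this gives
\begin{equation*}
 \int_\Omega\frac{\theta^{2\beta}\theta_x^2}v(x,t)dx
 +\frac{c_v}{\tilde\kappa}\int_0^t\int_\Omega\theta^\beta\theta_t^2dxds
 \leq C+L_1+L_2+L_3,
\end{equation*}
with the same three terms $L_1,L_2,L_3$ as before.

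\textbf{Step 2 (estimating $L_1,L_2,L_3$).} Every factor of $R_T$ from Lemma~\ref{lm28} is capped: \eqref{2113} gives at most $R_T^3$, \eqref{2112} gives at most $R_T$, and \eqref{21100} gives at most $R_T^2$.
\begin{itemize}
\item For $L_1$, use \eqref{283} with $\varepsilon=1$. This gives $\int_0^T\|\theta^\beta u_x\|_{L^2}^2dt\leq R_T^{2\beta}R_T^{\min\{\beta/2,1\}}$, so $L_1\leq CR_T^{3}(R_T^{2\beta+1}+R_T^{3})$, that is, $L_1\leq CR_T^{2\beta+4}$ (for $\beta>1$, $2\beta+1>3$).
\item For $L_2$, bound $\theta^{\beta+2}u_x^2\leq R_T^{\beta+1}\theta u_x^2$ and use \eqref{21100}. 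This gives $L_2\leq CR_T^{\beta+3}$.
\item For $L_3$, use the Young splitting \eqref{285} with $(\varepsilon_1,\varepsilon_2)=(R_T^{-1},R_T^{-3})$. Then Gronwall's inequality closes the estimate with the extra term $\varepsilon_1^{-1/2}\varepsilon_2^{-1/2}R_T^{\beta}R_T^{2}=R_T^{\beta+4}$.
\end{itemize}
Altogether,
\begin{equation*}
 \sup_t\|\theta^\beta\theta_x\|_{L^2}^2+\int_0^T\|\theta^{\beta/2}\theta_t\|_{L^2}^2dt
 \leq C\bigl(R_T^{2\beta+4}+R_T^{\beta+4}\bigr).
\end{equation*}

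\textbf{Step 3 (the main obstacle).} Now we feed this into \eqref{peak-square}, $R_T^{2\beta+3}\leq C+C\sup_t\|\theta^\beta\theta_x\|_{L^2}^2$. The exponent $2\beta+4$ from $L_1$ exceeds $2\beta+3$, so the crude bound does not close. This is the step we expect to be hardest.

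\begin{itemize}
\item First we will bound $L_1$ more carefully. Using \eqref{283} with general $\varepsilon$ does not help, because the factor $\sup\|u_x\|^2\sim R_T^3$ stays.
\item Instead we will interpolate $\theta^\beta u_x^4\leq R_T^{\beta-1}\theta u_x^2\cdot u_x^2$. Then $\int\theta^\beta u_x^4\leq R_T^{\beta-1}\int_0^T\|u_x\|_{L^\infty}^2\int\theta u_x^2$, and $\|u_x\|_{L^\infty}^2\leq 2\|u_x\|\|u_{xx}\|$. Optimising gives roughly $R_T^{\beta-1}\cdot R_T^{3/2}\cdot R_T^{3/2}\cdot R_T^{1}$, up to the $\sup$ terms. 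We will use \eqref{21100} for $\int\theta u_x^2$.
\item As a further alternative we will strengthen \eqref{peak-square}. Use $\theta^{\beta+3/2}$ with the weight $\int\theta\,dx$ replaced by the $L^1$ bound from \eqref{hot-measure}, or estimate the exponent $2\beta+3$ against $\sup\|\theta^\beta\theta_x\|^2$ through $\theta^{2\beta+2}$ with $\int_k^{k+1}\theta^2\leq CR_T$. This gains an extra power, $R_T^{2\beta+3}\lesssim R_T\cdot(\ldots)$ compared with $R_T^{2\beta+4}$.
\end{itemize}
Whichever route works, the goal is that all exponents on the right are strictly below $2\beta+3$. For $L_2$ and $L_3$ this already holds, since $\beta+4<2\beta+3$ for $\beta>1$. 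Then $R_T^{2\beta+3}\leq\frac12R_T^{2\beta+3}+C$, which gives $R_T\leq C$.

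\textbf{Step 4 (conclusion).} With $R_T\leq C$, the bound of Step~2 gives the gradient and $\theta_t$ parts of \eqref{2118}. The lower bound follows from \eqref{theta-lower}. Finally, we repeat \eqref{sixth-temperature}--\eqref{sixth-peak} verbatim for later use.
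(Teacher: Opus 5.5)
There is a genuine gap, and it sits exactly at the step you flag: nothing in Step~3 produces an exponent for $L_1$ strictly below $2\beta+3$.

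\textbf{The interpolation route.} Your count $R_T^{\beta-1}\cdot R_T^{3/2}\cdot R_T^{3/2}\cdot R_T$ drops a factor. H\"older in time on $\int_0^T\|u_x\|_{L^2}\|u_{xx}\|_{L^2}\|\theta^{1/2}u_x\|_{L^2}^2dt$ also costs $\sup_t\|\theta^{1/2}u_x\|_{L^2}\leq R_T^{1/2}\sup_t\|u_x\|_{L^2}\leq R_T^{2}$. So, with the caps you announced, this route gives $R_T^{\beta+5}$, which is not below $R_T^{2\beta+3}$ for $1<\beta\leq2$. Contrary to your remark, optimizing $\varepsilon$ in \eqref{283} does help. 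From your crude bound it gives the same $R_T^{\beta+5}$. Even with the uncapped exponents of Lemma~\ref{lm28}, both versions land exactly on $R_T^{7}=R_T^{2\beta+3}$ at $\beta=2$.

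\textbf{The other route.} This one cannot work at all. Given only $\int_k^{k+1}\theta\,dx\leq\alpha_2$, the inequality $R_T^{2\beta+3}\leq C+C\sup_t\|\theta^\beta\theta_x\|_{L^2}^2$ in \eqref{peak-square0} is sharp: a spike of height $R_T$ and width $R_T^{-1}$ saturates it. Moreover, $\int_k^{k+1}\theta^2dx\leq CR_T$ is a weaker input than the $L^1$ bound. So no power of $R_T$ can be gained on that side.

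\textbf{The missing idea.} This is \eqref{large-beta-local}: control $\|\theta\|_{L^\infty}$ by the entropy dissipation instead of by $R_T$.
On $[k,k+1]$, start from $\theta(b_k(t),t)\leq\alpha_2$ and write $\max\theta^{2\beta}\leq\alpha_2^{2\beta}+2\beta\int_k^{k+1}\theta^{2\beta-1}|\theta_x|dx$. Split the integral by Young into two pieces:
\begin{itemize}
\item[(a)] $\eta\int_k^{k+1}\theta^{2\beta+1}dx\leq\eta\alpha_2\max\theta^{2\beta}$, which is absorbed;
\item[(b)] $\eta^{-1}\beta^2R_T^{\beta-1}\int_k^{k+1}\theta^{\beta-2}\theta_x^2dx$.
\end{itemize}
This gives
\[
 \|\theta(t)\|_{L^\infty}^{2\beta}\leq C+CR_T^{\beta-1}\int_\Omega\theta^{\beta-2}\theta_x^2dx\leq C+CR_T^{\beta-1}V(t),
\]
and $\int_0^TV\,dt\leq E_0$ by \eqref{e0}, independently of $R_T$.

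Hence $\int_0^T\|\theta^\beta u_x\|_{L^2}^2dt\leq CR_T^{\min\{\beta/2,1\}}+CR_T^{\beta-1+\min\{3\beta/2,3\}}$, which replaces your $R_T^{2\beta+\min\{\beta/2,1\}}$. Taking $\varepsilon=R_T^{(1-\beta)/2}$ in \eqref{283} then yields $L_1\leq CR_T^{\min\{(7\beta-1)/2,(\beta+11)/2\}}$. This is strictly below $2\beta+3$ for every $\beta>1$: use the first value for $\beta\leq2$ and the second for $\beta\geq2$.

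With this replacement, your treatment of $L_2$ and $L_3$, the absorption through \eqref{peak-square0}, and Step~4 go through as written.
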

\noindent{\it Proof.}
As in \eqref{282}--\eqref{thermal-three},
\begin{align}
 &\int_\Omega\frac{\theta^{2\beta}\theta_x^2}v dx
       +\frac{c_v}{\tilde\kappa}\int_0^t\int_\Omega\theta^\beta\theta_t^2dxds
 \notag\\
 &\quad\leq C+C\int_0^t\int_\Omega\theta^\beta u_x^4dxds
       +C\int_0^t\int_\Omega\theta^{\beta+2}u_x^2dxds\notag\\
 &\qquad+C\int_0^t\|u_x\|_{L^\infty}
                \int_\Omega\frac{\theta^{2\beta}\theta_x^2}v dxds
 =C+\sum_{i=1}^3N_i.\label{2820}
\end{align}
For $\beta>1$, \eqref{v1} gives, for any $\eta>0$,
\begin{align}
 \max_{[k,k+1]}\theta^{2\beta}
 &\leq\alpha_2^{2\beta}
           +2\beta\int_k^{k+1}\theta^{2\beta-1}|\theta_x|dx\notag\\
 &\leq\alpha_2^{2\beta}
       +\eta\int_k^{k+1}\theta^{2\beta+1}dx
       +\frac{\beta^2}{\eta}
                         \int_k^{k+1}\theta^{2\beta-3}\theta_x^2dx
 \notag\\
 &\leq\alpha_2^{2\beta}
       +\eta\alpha_2\max_{[k,k+1]}\theta^{2\beta}
       +\frac{\beta^2}{\eta}R_T^{\beta-1}
                         \int_k^{k+1}\theta^{\beta-2}\theta_x^2dx,
 \notag\\
 \|\theta^{2\beta}\|_{L^\infty}
 &\leq C+C R_T^{\beta-1}
                         \int_\Omega\theta^{\beta-2}\theta_x^2dx
 \qquad\left(\eta=\frac1{2\alpha_2}\right),\notag\\
 \int_0^T\|\theta^\beta u_x\|_{L^2}^2dt
 &\leq C\int_0^T\|u_x\|_{L^2}^2dt
       +C R_T^{\beta-1}\sup_t\|u_x\|_{L^2}^2
                       \int_0^T\int_\Omega\theta^{\beta-2}\theta_x^2dxdt
 \notag\\
 &\leq C R_T^{\min\{\beta/2,1\}}
                   +C R_T^{\beta-1+\min\{3\beta/2,3\}}.
 \label{large-beta-local}
\end{align}
Taking $\varepsilon=R_T^{(1-\beta)/2}$ in \eqref{283}, we find
\begin{align}
 N_1
 &\leq C R_T^{\min\{3\beta/2,3\}}
       \left\{R_T^{(1-\beta)/2+\min\{\beta/2,1\}}
                    +R_T^{(\beta-1)/2+\min\{3\beta/2,3\}}\right\}
 \notag\\
 &\leq C R_T^{\min\{(7\beta-1)/2,(\beta+11)/2\}}.
 \label{quartic-large-beta}
\end{align}
For the pressure term, Lemma~\ref{lm28} yields
\begin{equation}\label{2840}
 N_2\leq C R_T^{\beta+1}\int_0^T\int_\Omega\theta u_x^2dxdt
 \leq
 C R_T^{\beta+1+\min\{\beta,2\}}
\end{equation}
For $N_3$, Young's inequality gives
\begin{align}
 C\|u_x\|_{L^\infty}
 &\leq C\|u_x\|_{L^2}^{1/2}\|u_{xx}\|_{L^2}^{1/2}\notag\\
 &\leq\varepsilon_1\|u_x\|_{L^2}^2
       +\varepsilon_2\|u_{xx}\|_{L^2}^2
       +C\varepsilon_1^{-1/2}\varepsilon_2^{-1/2},\notag\\
 N_3
 &\leq\int_0^t
      \bigl(\varepsilon_1\|u_x\|_{L^2}^2+\varepsilon_2\|u_{xx}\|_{L^2}^2\bigr)
                   \int_\Omega\frac{\theta^{2\beta}\theta_x^2}v dxds
 \notag\\
 &\quad+C\varepsilon_1^{-1/2}\varepsilon_2^{-1/2}
                       \int_0^t\int_\Omega\theta^{2\beta}\theta_x^2dxds,
 \label{2850}\\
 \int_0^T\int_\Omega\theta^{2\beta}\theta_x^2dxdt
 &\leq R_T^\beta\int_0^T\int_\Omega\theta^\beta\theta_x^2dxdt
 \leq
 C R_T^{\beta+\min\{\beta,2\}}\notag
\end{align}
Choose
\begin{equation}
 (\varepsilon_1,\varepsilon_2)=
 (R_T^{-\min\{\beta/2,1\}},R_T^{-\min\{3\beta/2,3\}})
\end{equation}
Then
\begin{align}
 &\int_0^T
      \bigl(\varepsilon_1\|u_x\|_{L^2}^2+\varepsilon_2\|u_{xx}\|_{L^2}^2\bigr)dt
 \leq C,\notag\\
 &\varepsilon_1^{-1/2}\varepsilon_2^{-1/2}
                  \int_0^T\int_\Omega\theta^{2\beta}\theta_x^2dxdt
 \leq
 C R_T^{\min\{3\beta,\beta+4\}}
\end{align}
Substituting \eqref{283}, \eqref{2840} and \eqref{2850} into \eqref{2820}
and applying Gronwall's inequality, we obtain
\begin{align*}
 &\int_\Omega\frac{\theta^{2\beta}\theta_x^2}v(x,t)dx
       +\frac{c_v}{\tilde\kappa}
                         \int_0^t\int_\Omega\theta^\beta\theta_t^2dxds\\
 &\quad\leq C\left[\int_\Omega
                         \frac{\theta_0^{2\beta}\theta_{0x}^2}{v_0}dx
       +\int_0^T\int_\Omega\theta^\beta u_x^4dxds
       +\int_0^T\int_\Omega\theta^{\beta+2}u_x^2dxds\right.\\
 &\hspace{24mm}\left.
       +\varepsilon_1^{-1/2}\varepsilon_2^{-1/2}
                        \int_0^T\int_\Omega\theta^{2\beta}\theta_x^2dxds
             \right]\\
 &\hspace{17mm}\times\exp\left\{
        \int_0^T\bigl(\varepsilon_1\|u_x\|_{L^2}^2
                          +\varepsilon_2\|u_{xx}\|_{L^2}^2\bigr)ds\right\}\\
 &\quad\leq C\left[1+\int_0^T\int_\Omega
                          (\theta^\beta u_x^4+\theta^{\beta+2}u_x^2)dxds
       +\varepsilon_1^{-1/2}\varepsilon_2^{-1/2}
                        \int_0^T\int_\Omega\theta^{2\beta}\theta_x^2dxds
             \right].
\end{align*}
Taking the supremum and using \eqref{quartic-large-beta} and \eqref{2840},
\begin{align}
 &\sup_{0\leq t\leq T}\|\theta^\beta\theta_x(t)\|_{L^2}^2
                 +\int_0^T\|\theta^{\beta/2}\theta_t\|_{L^2}^2dt\notag\\
 &\qquad\leq
 C\left(R_T^{\min\{(7\beta-1)/2,(\beta+11)/2\}}
              +R_T^{\min\{3\beta,\beta+4\}}\right).\label{2860}
\end{align}

For $x\in[k,k+1]$, \eqref{v0}--\eqref{v1} imply
\begin{align}
 \theta^{\beta+3/2}(x,t)
 &\leq\alpha_2^{\beta+3/2}
       +(\beta+3/2)\int_k^{k+1}\theta^{1/2}
                                  |\theta^\beta\theta_x|dx\notag\\
 &\leq\alpha_2^{\beta+3/2}
       +(\beta+3/2)\alpha_2^{1/2}
                     \|\theta^\beta\theta_x(t)\|_{L^2(k,k+1)},\notag\\
 (R_T-1)^{2\beta+3}
 &\leq2\alpha_2^{2\beta+3}
       +2(\beta+3/2)^2\alpha_2
                        \sup_t\|\theta^\beta\theta_x(t)\|_{L^2}^2.
 \label{peak-square0}
\end{align}
The exponents on the right of \eqref{2860} are strictly smaller than
$2\beta+3$. Indeed,
\begin{align}
 2\beta+3-\frac{7\beta-1}2&=\frac{7-3\beta}2>0,\notag\\
 2\beta+3-3\beta&=3-\beta>0
                         \qquad(1<\beta\leq2),\notag\\
 2\beta+3-\frac{\beta+11}2&=\frac{3\beta-5}2>0,\notag\\
 2\beta+3-(\beta+4)&=\beta-1>0
                         \qquad(\beta\geq2).
\end{align}
Thus \eqref{2860} and \eqref{peak-square0} give
\begin{equation}
 R_T^{2\beta+3}
 \leq C+C\sup_t\|\theta^\beta\theta_x(t)\|_{L^2}^2
 \leq\frac12 R_T^{2\beta+3}+C,\qquad R_T\leq C.
\end{equation}
Substituting this in \eqref{2860} and using \eqref{theta-lower}
proves \eqref{2118}. The calculations \eqref{sixth-temperature}--\eqref{sixth-peak} now apply also for $\beta>1$.
\qed

\subsection*{2.2. Higher order estimates}
\begin{lemma}\label{lm280}
Suppose that
\begin{equation}\label{all-smallness}
 \begin{gathered}
 0\leq\alpha\leq M^{-1},\qquad
 \alpha C_1(1+M)^3\leq\min\{1,C_2/4\},\\
 \alpha\left(\frac{2eC_3C_4}{\tilde\mu}+\sqrt2 C_3^2C_4\right)
                                      (1+M)^2\leq1,\\
 \alpha^2(e\tilde\mu C_4)^2M\leq1.
 \end{gathered}
\end{equation}
There is $C_7>1$, independent of $M,T,\alpha$, such that
\begin{equation}\label{292}
 \begin{aligned}
 &\sup_{0\leq t\leq T}
       \left\{\|(v,v^{-1},\theta,\theta^{-1})(t)\|_{L^\infty}
                     +\|(v-1,u,\theta-1)(t)\|_{H^1}^2\right\}\\
 &\quad+\int_0^T
                  \|(v_x,u_x,\theta_x,u_t,\theta_t,u_{xx},\theta_{xx})\|_{L^2}^2dt
 \leq C_7.
 \end{aligned}
\end{equation}
Moreover, $(v-1,u,\theta-1)\in C([0,T];H^1)$.
In addition, $v_t=u_x$ and $v_{xt}=u_{xx}$ have the bounds in
\eqref{1110}.
\end{lemma}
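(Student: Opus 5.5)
Under \eqref{all-smallness}, all of Lemmas~\ref{lm23}--\ref{lm29} are available. The plan is to collect them into a bound independent of $M,T,\alpha$. The only new piece needed is $\theta_{xx}$, together with the justification of the time-integrated identities and the continuity in time.

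\emph{Step 1: zeroth and first order quantities.} From Lemma~\ref{lm23} and \eqref{theta-lower}, $v,v^{-1},\theta^{-1}$ are bounded. Lemma~\ref{lm27} (for $0\le\beta\le1$) or Lemma~\ref{lm29} (for $\beta>1$) gives $R_T\le C$. Substituting $R_T\le C$ into Lemmas~\ref{lm24}--\ref{lm26}, respectively \eqref{2110}--\eqref{2113}, yields
\[
\sup_t\bigl(\|u\|_{L^2}^2+\|\theta-1\|_{L^2}^2+\|v_x\|_{L^2}^2+\|u_x\|_{L^2}^2\bigr)+\int_0^T\|(v_x,u_x,\theta_x,u_{xx},u_t)\|_{L^2}^2dt\le C.
\]
The entropy bound \eqref{e0} with $v\in[C_3^{-1},C_3]$ controls $\|v-1\|_{L^2}$. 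Since $\theta\ge C_4^{-1}$, the bounds \eqref{281}/\eqref{2118} on $\|\theta^\beta\theta_x\|_{L^2}$ and $\|\theta^{\beta/2}\theta_t\|_{L^2}$ give $\sup_t\|\theta_x\|_{L^2}^2+\int_0^T\|\theta_t\|_{L^2}^2dt\le C$.

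\emph{Step 2: $\theta_{xx}$.} I would expand \eqref{13a} as
\[
\frac{\tilde\kappa\theta^\beta}{v}\theta_{xx}=c_v\theta_t+\frac{R\theta u_x}{v}-\frac{\mu u_x^2}{v}-\frac{\tilde\kappa\beta\theta^{\beta-1}\theta_x^2}{v}+\frac{\tilde\kappa\theta^\beta\theta_xv_x}{v^2}.
\]
The coefficient of $\theta_{xx}$ is bounded below, so $\|\theta_{xx}\|_{L^2}^2\le C(\|\theta_t\|^2+\|u_x\|^2+\|u_x\|_{L^4}^4+\|\theta_x\|_{L^4}^4+\|\theta_xv_x\|^2)$. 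The quartic terms are handled by
\[
\|u_x\|_{L^4}^4\le\|u_x\|_{L^2}^2\|u_x\|_{L^\infty}^2\le C\|u_x\|_{L^2}^3\|u_{xx}\|_{L^2}\le C(\|u_x\|_{L^2}^2+\|u_{xx}\|_{L^2}^2),
\]
which is integrable by Step~1. The terms $\theta_x^4$ and $\theta_x^2v_x^2$ are bounded by $C\|\theta_x\|_{L^\infty}^2\le C\|\theta_x\|_{L^2}\|\theta_{xx}\|_{L^2}\le\varepsilon\|\theta_{xx}\|^2+C\|\theta_x\|^2$. Absorbing the $\varepsilon$ term and integrating in time gives $\int_0^T\|\theta_{xx}\|_{L^2}^2dt\le C$. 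Taking $C_7$ as the maximum of these constants yields \eqref{292}.

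\emph{Step 3: regularity claims and justification.} From $v_t=u_x$ and $v_{xt}=u_{xx}$, Step~1 gives $v_t\in L^\infty(L^2)\cap L^2(H^1)$. For continuity in time: $u,\theta-1\in L^2(H^2)$ with $u_t,\theta_t\in L^2(L^2)$ gives $C([0,T];H^1)$ by the standard interpolation lemma (Lions--Magenes). For $v$, $v-1\in L^\infty(H^1)$ with $v_t\in L^2(H^1)$ gives $v-1\in C([0,T];H^1)$ directly.

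\emph{Main obstacle.} The expected difficulty is justifying the energy identities used earlier (e.g.\ \eqref{269} and \eqref{282}) for solutions only in the class \eqref{strong-class}, including attainment of the initial values. I would mollify in $x$, using the finite bounds $C(T,M)$ from \eqref{20} and \eqref{finite-products} to pass to the limit in every product. The identity \eqref{269} is the delicate one: it involves $v_x$ only in $L^\infty(L^2)$ and needs $(v_x/v)_t\in L^2$, which is supplied by \eqref{cross-chain}. This makes $t\mapsto\int v_x^2/v^2-uv_x/(\mu v)$ absolutely continuous, with the correct initial value by weak continuity.
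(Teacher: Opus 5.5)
Your proof is correct, and its outline matches the paper's: insert $R_T\le C$ into the earlier lemmas, recover the second $\theta$-derivative from \eqref{13a}, read off $v_t=u_x$ and $v_{xt}=u_{xx}$, then justify continuity and the identities. Two steps are done differently.

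For $\theta_{xx}$, the paper estimates the flux $(\theta^\beta\theta_x)_x$ through \eqref{flux-identity} in time-integrated form. Before absorbing the $\eta$-term it must therefore check separately that $\int_0^T\|(\theta^\beta\theta_x)_x\|_{L^2}^2dt\le C(M)<\infty$, and only then does it convert to $\theta_{xx}$. You expand $\theta_{xx}$ directly and absorb $C\|\theta_x\|_{L^\infty}^2\le C\|\theta_x\|_{L^2}\|\theta_{xx}\|_{L^2}$ at a.e.\ fixed $t$. This only uses $\theta_{xx}(t)\in L^2$ and is a little more elementary. The paper's form also records \eqref{flux-bound}, which is reused in \eqref{gradient-variation}. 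That bound follows at once from yours, via $(\theta^\beta\theta_x)_x=\theta^\beta\theta_{xx}+\beta\theta^{\beta-1}\theta_x^2$ and $\int_0^T\|\theta_x\|_{L^4}^4dt\le C$.

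For time continuity, you use the Lions--Magenes embedding of $L^2(0,T;H^2)\cap H^1(0,T;L^2)$ into $C([0,T];H^1)$, which needs no boundary condition. The paper instead uses spectral projections of $-\partial_{xx}$ with the prescribed homogeneous boundary condition. The advantage of the paper's choice is that the same truncations also justify the weighted chain rule \eqref{weighted-chain} behind \eqref{282}, with vanishing boundary terms. If you mollify in $x$ on the half-line instead, reflect first so the mollified functions keep the boundary condition. Use an even reflection for $\theta$, or for $(\theta^{\beta+1}-1)/(\beta+1)$, under the insulated condition, and an odd reflection for $u$ and $\theta-1$ under the Dirichlet conditions. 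With that detail, your justification is at the same level as the paper's sketch.
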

\noindent{\it Proof.}
The preceding lemmas and the entropy estimate give
\begin{align}
 &\sup_{0\leq t\leq T}
       \left\{\|(v,v^{-1},\theta,\theta^{-1})(t)\|_{L^\infty}
                     +\|(v-1,u,\theta-1)(t)\|_{H^1}^2
                     +\|\theta^\beta\theta_x(t)\|_{L^2}^2\right\}\notag\\
 &\quad+\int_0^T
           \left(\|(v_x,u_x,\theta_x,u_t,\theta_t,u_{xx})\|_{L^2}^2
                            +\|\theta^\beta\theta_x\|_{L^2}^2\right)dt
 \leq C,\label{293}\\
 &\int_0^T\int_\Omega u_x^4dxdt
 \leq2\sup_t\|u_x\|_{L^2}^2
       \left(\int_0^T\|u_x\|_{L^2}^2dt\right)^{1/2}
       \left(\int_0^T\|u_{xx}\|_{L^2}^2dt\right)^{1/2}\leq C.
 \label{ux-fourth}
\end{align}
Expanding \eqref{13a},
\begin{equation}\label{flux-identity}
 \frac{(\theta^\beta\theta_x)_x}v
 =\frac{\theta^\beta\theta_xv_x}{v^2}
       +\frac{c_v}{\tilde\kappa}\theta_t
       -\frac{\mu}{\tilde\kappa v}u_x^2
       +\frac{R\theta}{\tilde\kappa v}u_x.
\end{equation}
Consequently,
\begin{align}
 \int_0^T\left\|\frac{(\theta^\beta\theta_x)_x}v\right\|_{L^2}^2dt
 &\leq C\sup_t\|v_x\|_{L^2}^2
                         \int_0^T\|\theta^\beta\theta_x\|_{L^\infty}^2dt
       +C\int_0^T\int_\Omega(\theta_t^2+u_x^4+u_x^2)dxdt\notag\\
 &\leq C\int_0^T\|\theta^\beta\theta_x\|_{L^\infty}^2dt+C,
 \label{294}\\
 \int_0^T\|\theta^\beta\theta_x\|_{L^\infty}^2dt
 &\leq2\int_0^T\|\theta^\beta\theta_x\|_{L^2}
                            \|(\theta^\beta\theta_x)_x\|_{L^2}dt\notag\\
 &\leq\eta\int_0^T
                   \left\|\frac{(\theta^\beta\theta_x)_x}v\right\|_{L^2}^2dt
       +C(\eta)\int_0^T\|\theta^\beta\theta_x\|_{L^2}^2dt.
 \label{295}
\end{align}
Before absorption, \eqref{20} and \eqref{finite-products} give
\begin{align*}
 \int_0^T\int_\Omega|(\theta^\beta\theta_x)_x|^2dxdt
 &\leq C(M)\int_0^T\int_\Omega
                 (\theta_{xx}^2+\theta_x^4)dxdt\leq C(M),\\
 \int_0^T\|\theta^\beta\theta_x\|_{L^\infty}^2dt
 &\leq C(M)\int_0^T\|\theta_x\|_{L^\infty}^2dt\leq C(M).
\end{align*}
Choose $\eta>0$ so that the first term on substituting
\eqref{295} into \eqref{294} has coefficient at most $1/2$. Then
\begin{align}
 \int_0^T\left\|\frac{(\theta^\beta\theta_x)_x}v\right\|_{L^2}^2dt
 &\leq\frac12\int_0^T
                   \left\|\frac{(\theta^\beta\theta_x)_x}v\right\|_{L^2}^2dt+C,
 \notag\\
 \int_0^T\left(\|(\theta^\beta\theta_x)_x\|_{L^2}^2
                         +\|\theta^\beta\theta_x\|_{L^\infty}^2\right)dt
 &\leq C.\label{flux-bound}
\end{align}
Finally,
\begin{align}
 \theta_{xx}
 &=\theta^{-\beta}(\theta^\beta\theta_x)_x
                                      -\beta\theta^{-1}\theta_x^2,\notag\\
 \int_0^T\int_\Omega\theta_x^4dxdt
 &\leq\sup_t\|\theta_x\|_{L^2}^2\int_0^T\|\theta_x\|_{L^\infty}^2dt
 \leq C\int_0^T\|\theta^\beta\theta_x\|_{L^\infty}^2dt\leq C,\notag\\
 \int_0^T\|\theta_{xx}\|_{L^2}^2dt
 &\leq C\int_0^T\|(\theta^\beta\theta_x)_x\|_{L^2}^2dt
                              +C\int_0^T\int_\Omega\theta_x^4dxdt
 \leq C.
 \label{thexx}
\end{align}
Together with \eqref{293}, this proves \eqref{292}. The remaining
assertions follow directly from $v_t=u_x$.

Finally, we verify the time continuity and the time-integrated identities.

Let $\Pi_n$ be the spectral projection onto $[0,n]$ of
$-\partial_{xx}$ with the prescribed homogeneous boundary condition.
For a function $z$ with this boundary condition and
\begin{equation*}
 \sup_{t\leq T}\int_\Omega z^2dx
 +\int_0^T\int_\Omega(z_t^2+z_x^2+z_{xx}^2)dxdt\leq C(T,M),
\end{equation*}
we have
\begin{align}
 &\frac d{dt}\|(\Pi_nz)_x\|_{L^2}^2
 =-2\int_\Omega(\Pi_nz)_{xx}\Pi_nz_tdx,\notag\\
 &\sup_{0\leq t\leq T}\|((\Pi_m-\Pi_n)z)_x(t)\|_{L^2}^2\notag\\
 &\qquad\leq\|((\Pi_m-\Pi_n)z)_x(s_0)\|_{L^2}^2\notag\\
 &\qquad\quad+\int_0^T\int_\Omega
       \bigl(|(\Pi_m-\Pi_n)z_{xx}|^2+|(\Pi_m-\Pi_n)z_t|^2\bigr)dxdt
 \longrightarrow0,\label{spectral-tail}
\end{align}
where $m>n\to\infty$ and $s_0\in(0,T)$ is chosen so that
$\int_\Omega(z^2+z_x^2+z_{xx}^2)(x,s_0)dx\leq C(T,M)$. Also,
\begin{equation*}
 \|z(t)-z(s)\|_{L^2}^2
 \leq(t-s)\int_s^t\int_\Omega z_t^2dxdr\qquad(0\leq s<t\leq T).
\end{equation*}
It follows that
\begin{align}
 &\sup_{t\leq T}\int_\Omega
       \bigl(|(\Pi_n-1)z|^2+|((\Pi_n-1)z)_x|^2\bigr)dx
 \longrightarrow0,\notag\\*
 &\|z_x(t)\|_{L^2}^2-\|z_x(s)\|_{L^2}^2
 =-2\int_s^t\int_\Omega z_{xx}z_t\,dxdr.
 \label{gradient-chain}
\end{align}
Apply this to $u$ and $\theta-1$. In particular,
\begin{equation}\label{velocity-chain}
 \frac12\frac d{dt}\|u_x\|_{L^2}^2=-\int_\Omega u_{xx}u_tdx.
\end{equation}
For $v$, use
\begin{equation}
 \|v(t)-v(s)\|_{H^1}^2
 \leq(t-s)\int_s^t\int_\Omega(u_x^2+u_{xx}^2)dxdr.
\end{equation}
On the half-line,
\begin{equation}\label{neumann-trace}
 \int_0^T|\theta_x(0,t)|^2dt
 \leq\int_0^T\int_\Omega(\theta_x^2+\theta_{xx}^2)dxdt\leq C(M).
\end{equation}
For the initial zero-order trace,
\begin{equation}
 \begin{aligned}
 |z(0,t)-z(0,0)|^2
 &\leq2\|z(t)-z(0)\|_{L^2}
                    \bigl(\|z_x(t)\|_{L^2}+\|z_x(0)\|_{L^2}\bigr)\\
 &\leq2\sqrt t\left(\int_0^t\|z_t\|_{L^2}^2ds\right)^{1/2}
                    \bigl(\|z_x(t)\|_{L^2}+\|z_x(0)\|_{L^2}\bigr)
 \longrightarrow0.
 \end{aligned}
\end{equation}

For the weighted identity, differentiation gives
\begin{align}
 \left(\frac{\theta^{\beta+1}-1}{\beta+1}\right)_t
 &=\theta^\beta\theta_t,\qquad
 \left(\frac{\theta^{\beta+1}-1}{\beta+1}\right)_{xx}
 =\theta^\beta\theta_{xx}+\beta\theta^{\beta-1}\theta_x^2,\notag\\
 (v^{-1})_x&=-\frac{v_x}{v^2},\qquad
 (v^{-1})_t=-\frac{u_x}{v^2},\qquad
 (v^{-1})_{tx}=-\frac{u_{xx}}{v^2}+\frac{2u_xv_x}{v^3}.\notag
\end{align}
By \eqref{20} and \eqref{finite-products},
\begin{align}
 &\sup_{t\leq T}\int_\Omega
       \left|\frac{\theta^{\beta+1}-1}{\beta+1}\right|^2dx
 +\int_0^T\int_\Omega
       \bigl(|\theta^\beta\theta_t|^2+|\theta^\beta\theta_x|^2
            +|\theta^\beta\theta_{xx}
                         +\beta\theta^{\beta-1}\theta_x^2|^2\bigr)dxdt
 \notag\\
 &\quad\leq C(M)\left[\sup_{t\leq T}\int_\Omega(\theta-1)^2dx
       +\int_0^T\int_\Omega
                 (\theta_t^2+\theta_x^2+\theta_{xx}^2+\theta_x^4)dxdt\right]
 \leq C(M),\notag\\
 &\sup_{t\leq T}\int_\Omega|(v^{-1})_x|^2dx
 +\int_0^T\|(v^{-1})_t\|_{L^\infty}^2dt
 +\int_0^T\int_\Omega|(v^{-1})_{tx}|^2dxdt\notag\\
 &\quad\leq C(M)\left[\sup_{t\leq T}\int_\Omega v_x^2dx
       +\int_0^T\|u_x\|_{L^\infty}^2dt
       +\int_0^T\int_\Omega(u_{xx}^2+u_x^2v_x^2)dxdt\right]
 \leq C(M).\label{weighted-finite}
\end{align}
For $z$ as above, \eqref{weighted-finite} gives
\begin{align}
 &\frac d{dt}\int_\Omega v^{-1}|(\Pi_nz)_x|^2dx
 =-2\int_\Omega\bigl(v^{-1}(\Pi_nz)_x\bigr)_x\Pi_nz_t\,dx
   +\int_\Omega (v^{-1})_t|(\Pi_nz)_x|^2dx,\notag\\
 &\int_0^T\|((\Pi_n-1)z)_x\|_{L^\infty}^2dt\notag\\
 &\qquad\leq\int_0^T\int_\Omega
       \bigl(|((\Pi_n-1)z)_x|^2+|((\Pi_n-1)z)_{xx}|^2\bigr)dxdt
 \longrightarrow0,\notag\\
 &\int_0^T\int_\Omega
       \left|\bigl(v^{-1}((\Pi_n-1)z)_x\bigr)_x\right|^2dxdt\notag\\*
 &\qquad\leq2M^2\int_0^T\int_\Omega|((\Pi_n-1)z)_{xx}|^2dxdt
       +2M^5\int_0^T\|((\Pi_n-1)z)_x\|_{L^\infty}^2dt
 \longrightarrow0,\notag\\
 &\int_0^T\int_\Omega|(v^{-1})_t|
       \bigl|| (\Pi_nz)_x|^2-|z_x|^2\bigr|\,dxdt\notag\\
 &\qquad\leq\sqrt T
       \left(\int_0^T\|(v^{-1})_t\|_{L^\infty}^2dt\right)^{1/2}
       \sup_t\|((\Pi_n-1)z)_x\|_{L^2}
       \sup_t(\|(\Pi_nz)_x\|_{L^2}+\|z_x\|_{L^2})\notag\\
 &\qquad\leq C(T,M)
       \left(\sup_t\int_\Omega|((\Pi_n-1)z)_x|^2dx\right)^{1/2}
 \longrightarrow0.\notag
\end{align}
Thus
\begin{equation}\label{weighted-chain}
 \left[\int_\Omega v^{-1}z_x^2dx\right]_s^t
 =-2\int_s^t\int_\Omega(v^{-1}z_x)_xz_t\,dxdr
   +\int_s^t\int_\Omega (v^{-1})_tz_x^2\,dxdr.
\end{equation}
Applying \eqref{weighted-chain} to $(\theta^{\beta+1}-1)/(\beta+1)$ gives
\begin{equation}\label{thermal-chain}
 \frac12\frac d{dt}\int_\Omega\frac{\theta^{2\beta}\theta_x^2}v dx
 =-\int_\Omega\left(\frac{\theta^\beta\theta_x}v\right)_x
                         \theta^\beta\theta_tdx
 -\frac12\int_\Omega\frac{u_x\theta^{2\beta}\theta_x^2}{v^2}dx.
\end{equation} The spectral truncations preserve the appropriate
homogeneous boundary condition.

The lower-order tests follow from the $H^1(0,T;L^2)$ chain rule,
\eqref{finite-products}, \eqref{cross-chain} and
\eqref{moving-level-chain}. In particular, positive-part approximation gives
\begin{equation}
 \frac d{dt}\int_\Omega u^2(\theta-2)_+dx
 =\int_\Omega\bigl(2uu_t(\theta-2)_+
              +u^2{\bf1}_{\{\theta>2\}}\theta_t\bigr)dx.
\end{equation}
The temperature multipliers vanish at a fixed-temperature boundary,
and the heat flux vanishes at an insulated boundary. The velocity
multipliers have zero trace. Spatial cutoffs remove products of $L^2$
factors at infinity. Thus all time-integrated identities include their
initial values, without an initial $H^2$ assumption.

\qed

\section{Proof of Theorem \ref{lm11}}\label{3}
\setcounter{equation}{0}

We first recall the local existence result.
\begin{lemma}\label{lm31}
Suppose that the initial data satisfy \eqref{190} and the zero-order
compatibility conditions in Theorem~\ref{lm11}. For fixed positive
$R,c_v,\tilde\mu,\tilde\kappa$ and fixed $\alpha,\beta\geq0$, there is
$T_0>0$ such that the corresponding problem admits a unique solution
in $X([0,T_0])$. The solution can be continued beyond a finite endpoint
provided that the perturbation $H^1$ norm remains bounded and $v,\theta$
remain bounded above and away from zero.
\end{lemma}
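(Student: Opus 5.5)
This is a standard local existence and continuation result for a uniformly parabolic–hyperbolic system, and I would prove it by a linearization and contraction argument in the class \eqref{strong-class}. \emph{Step 1: uniform parabolicity near time zero.} By \eqref{190} and the Sobolev embedding $H^1\subset L^\infty$, the data satisfy $m_0\leq v_0,\theta_0\leq m_0^{-1}$ for some $m_0>0$. For this $m_0$ and fixed $\alpha,\beta$, the coefficients $\mu=\tilde\mu\theta^\alpha$ and $\kappa=\tilde\kappa\theta^\beta$ are smooth and bounded above and below on $[m_0/2,2/m_0]$.

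\emph{Step 2: the iteration map.} Given $(\bar v,\bar\theta)$ in a closed ball of
\[
L^\infty(0,T;H^1)\cap\{\tfrac{m_0}{2}\leq\bar v,\bar\theta\leq\tfrac{2}{m_0}\},
\]
I solve three linear problems in sequence.
\begin{enumerate}
\item[(a)] The linear parabolic equation for $u$,
\[
u_t=\Bigl(\frac{\mu(\bar\theta)}{\bar v}u_x\Bigr)_x-\Bigl(\frac{R\bar\theta}{\bar v}\Bigr)_x ,
\]
with the Dirichlet or far-field condition.
\item[(b)] The equation $c_v\theta_t=(\kappa(\bar\theta)\theta_x/\bar v)_x-R\bar\theta u_x/\bar v+\mu(\bar\theta)u_x^2/\bar v$, with the Neumann or Dirichlet condition as prescribed.
\item[(c)] Finally, $v=v_0+\int_0^t u_x$.
\end{enumerate}
For divergence-form parabolic equations whose coefficients $a=\mu/\bar v$ are bounded below and satisfy $a_x\in L^\infty(0,T;L^2)$, the standard $H^1$ energy estimates apply. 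One multiplies by $u$ and by $-u_{xx}$ (or by $u_t$), and handles $a_xu_x$ using $\|u_x\|_{L^\infty}^2\leq 2\|u_x\|_{L^2}\|u_{xx}\|_{L^2}$. This gives $u\in L^\infty H^1\cap L^2H^2$ and $u_t\in L^2L^2$, with bounds depending only on the data and on the ball radius, uniformly for small $T$. The same estimates, with the quadratic source $u_x^2\in L^2(0,T;L^2)$ (from \eqref{ux-fourth}-type interpolation), give the corresponding regularity for $\theta$. The bounds $\|v(t)-v_0\|_{H^1}\leq \sqrt t\,\|u_x\|_{L^2H^1}$ and $\|\theta(t)-\theta_0\|_{L^\infty}\lesssim t^{1/4}$ then keep the iterate inside the ball, with the pointwise bounds preserved, once $T_0$ is small.

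\emph{Step 3: contraction.} I will show contraction in the weaker norm $L^\infty(0,T;L^2)$ for the differences of $(v,\theta)$ together with $L^2(0,T;H^1)$ for $u,\theta$. Difference estimates produce factors $T^{1/2}$ or $T^{1/4}$, and the boundedness in the strong norm gives closedness of the ball by weak-$*$ compactness. The fixed point lies in $X([0,T_0])$. Uniqueness follows from the same difference estimate together with Gronwall's inequality. Time continuity in $H^1$ is obtained as at the end of the proof of Lemma~\ref{lm280}.

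\emph{Step 4: continuation.} The time $T_0$ depends only on $m_0$ and on $\|(v_0-1,u_0,\theta_0-1)\|_{H^1}$. Hence, if these quantities stay bounded on $[0,T^*)$, one restarts the argument from $t$ close to $T^*$ and extends the solution past $T^*$.

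\emph{Main obstacle.} I expect the main difficulty to be Step 2(a). At $H^1$ regularity of $v$, the coefficient $\mu/\bar v$ is only $H^1$ in $x$ and merely bounded in $t$ (there is no control of $\bar v_t$ beyond $L^2$). Consequently the $-u_{xx}$ multiplier must be used rather than $u_t$, and the $\mu_x$-terms must be absorbed via the interpolation $\|u_x\|_{L^\infty}^2\leq 2\|u_x\|_{L^2}\|u_{xx}\|_{L^2}$, exactly as in \eqref{momentum-products}.
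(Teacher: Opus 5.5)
Your proposal is correct and follows the paper's route. The paper only sketches Lemma~\ref{lm31} as the usual local energy theory for the continuity equation plus the two uniformly parabolic equations on a fixed positive range of $v,\theta$, with uniqueness, continuation and time continuity as in Lemma~\ref{lm280}; your linearization--contraction scheme fills in that sketch correctly, using the $-u_{xx}$ multiplier and the interpolation $\|u_x\|_{L^\infty}^2\leq2\|u_x\|_{L^2}\|u_{xx}\|_{L^2}$ to absorb coefficient derivatives that lie only in $L^\infty(0,T;L^2)$. The only point you leave implicit is that your $H^1$ energy identities up to $t=0$ on the half-line use the zero-order compatibility conditions $u_0(0)=0$ (and $\theta_0(0)=1$ for \eqref{1600}), which are among the lemma's hypotheses.
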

The local construction uses the continuity equation and the two
uniformly parabolic equations for $u$ and $\theta$. On a fixed positive
range of $v,\theta$, the coefficients and their first derivatives are
bounded, and the usual local energy estimates apply to the $H^1$
perturbations. The same estimates give uniqueness and continuation.
The boundary conditions are imposed at positive times; the initial
traces and the required time continuity are as in
Lemma~\ref{lm280}.

\subsection{Global existence}
If $E_0=0$, the assertion follows from the equilibrium solution.
For $E_0>0$, set
\begin{align}
 M&=8\max\left\{1,C_7,(\inf_\Omega v_0)^{-1},
                       (\inf_\Omega\theta_0)^{-1},\right.\notag\\
 &\hspace{23mm}\left.
       1+\sqrt2\|(v_0-1,u_0,\theta_0-1)\|_{H^1},
       \|(v_0-1,u_0,\theta_0-1)\|_{H^1}^2\right\},\label{fixed-M}\\
 \varepsilon_0
 &=\min\left\{M^{-1},
       \frac{\min\{1,C_2/4\}}{C_1(1+M)^3},
       \right.\notag\\
 &\hspace{18mm}\left.
       \frac1{\left(2eC_3C_4/\tilde\mu+\sqrt2 C_3^2C_4\right)(1+M)^2},
       \frac1{e\tilde\mu C_4\sqrt M}\right\}>0.
 \label{fixed-alpha}
\end{align}
The constants $C_1,C_2,C_3,C_4,C_7$ were obtained before choosing $M$, and
depend only on the fixed coefficients and initial quantities.
For any $0\leq\alpha\leq\varepsilon_0$, \eqref{all-smallness} holds.
Moreover,
\begin{equation}
 \|v_0-1\|_{L^\infty}^2\leq2\|v_0-1\|_{L^2}\|v_{0x}\|_{L^2},\qquad
 \|\theta_0-1\|_{L^\infty}^2\leq2\|\theta_0-1\|_{L^2}\|\theta_{0x}\|_{L^2}.
\end{equation}
The local solution therefore belongs to $X_M$ on a sufficiently
short interval. On each interval on which it belongs to $X_M$,
Lemma~\ref{lm280} gives the strict improvement
\begin{equation}\label{strict-improvement}
 \begin{gathered}
 8/M\leq v,\theta\leq M/8,\\
 \sup_{0\leq t\leq T}\|(v-1,u,\theta-1)(t)\|_{H^1}^2
 +\int_0^T\|(v_x,u_x,\theta_x,\theta_t,u_{xx},\theta_{xx})\|_{L^2}^2dt
 \leq M/8.
 \end{gathered}
\end{equation}
The $H^1$ continuity in Lemma~\ref{lm280} also gives continuity
in $L^\infty$. Moreover,
\begin{equation*}
 \int_t^{t+h}\int_\Omega
       (v_x^2+u_x^2+\theta_x^2+\theta_t^2+u_{xx}^2+\theta_{xx}^2)dxds
 \longrightarrow0\qquad(h\to0).
\end{equation*}
Thus a first finite exit from $X_M$ is impossible. At a finite maximal
existence time, \eqref{strict-improvement} also satisfies the continuation
criterion in Lemma~\ref{lm31}. Thus the solution is global.
Taking $T\uparrow\infty$ in \eqref{292} proves \eqref{1110} and
\eqref{18}. Uniqueness follows from Lemma~\ref{lm31}.

\subsection{Large time behavior}
On the global solution just obtained, \eqref{292} and
\eqref{flux-bound} hold with $T=\infty$. Applying
\eqref{gradient-chain} to $u$ and
$(\theta^{\beta+1}-1)/(\beta+1)$ gives
\begin{align}
 &\frac12\frac d{dt}\|u_x\|_{L^2}^2
 =-\int_\Omega u_{xx}u_tdx,\notag\\
 &\frac12\frac d{dt}\|\theta^\beta\theta_x\|_{L^2}^2
 =-\int_\Omega(\theta^\beta\theta_x)_x\theta^\beta\theta_tdx,
 \notag\\
 &\frac12\frac d{dt}\|v_x\|_{L^2}^2
 =\int_\Omega v_xu_{xx}dx,\notag\\
 &\int_0^\infty
 \left|\frac d{dt}
       \bigl(\|v_x\|_{L^2}^2+\|u_x\|_{L^2}^2+\|\theta^\beta\theta_x\|_{L^2}^2\bigr)\right|dt
 \notag\\
 &\qquad\leq2\int_0^\infty
       \bigl(\|v_x\|_{L^2}\|u_{xx}\|_{L^2}+\|u_t\|_{L^2}\|u_{xx}\|_{L^2}\bigr)dt\notag\\
 &\qquad\quad+2\int_0^\infty\|\theta^\beta\theta_t\|_{L^2}
                            \|(\theta^\beta\theta_x)_x\|_{L^2}dt
 \leq C.\label{gradient-variation}
\end{align}
It follows that
\begin{align}
 &\|v_x(t)\|_{L^2}^2+\|u_x(t)\|_{L^2}^2+\|\theta^\beta\theta_x(t)\|_{L^2}^2\notag\\
 &\quad\leq\int_t^{t+1}
       \bigl(\|v_x(s)\|_{L^2}^2+\|u_x(s)\|_{L^2}^2
                              +\|\theta^\beta\theta_x(s)\|_{L^2}^2\bigr)ds
 \notag\\
 &\qquad+\int_t^{t+1}
       \left|\frac d{ds}
        \bigl(\|v_x\|_{L^2}^2+\|u_x\|_{L^2}^2+\|\theta^\beta\theta_x\|_{L^2}^2\bigr)\right|ds
 \longrightarrow0,\notag\\
 &\|(v_x,u_x,\theta_x)(t)\|_{L^2}\longrightarrow0.
 \label{gradient-decay}
\end{align}
Likewise, \eqref{sixth-temperature} gives
\begin{equation}
 \begin{aligned}
 \int_\Omega(\theta-1)^6(x,t)dx
 &\leq\int_t^{t+1}\int_\Omega(\theta-1)^6(x,s)dxds\\
 &\quad+\int_t^{t+1}
          \left|\frac d{ds}\int_\Omega(\theta-1)^6(x,s)dx\right|ds
 \longrightarrow0.
 \end{aligned}
\end{equation}
For $f=v-1,u,\theta-1$ and $2<p<\infty$,
\begin{align}
 \|f(t)\|_p
 &\leq\|f(t)\|_{L^2}^{2/p}\|f(t)\|_{L^\infty}^{1-2/p}\notag\\
 &\leq2^{(p-2)/(2p)}
       \|f(t)\|_{L^2}^{(p+2)/(2p)}
       \|f_x(t)\|_{L^2}^{(p-2)/(2p)}
 \longrightarrow0,\notag\\
 \|f(t)\|_{L^\infty}^2
 &\leq2\|f(t)\|_{L^2}\|f_x(t)\|_{L^2}\longrightarrow0.
\end{align}
This proves \eqref{110} and completes the proof of
Theorem~\ref{lm11}.

\end{document}